\documentclass[10pt,a4paper]{amsart}
\usepackage{amsaddr}
\usepackage[usenames]{color} 
\usepackage[applemac]{inputenc}
\usepackage{amssymb} 
\usepackage{amsmath} 
\usepackage{mathabx}
\usepackage{amsthm}
\usepackage[all]{xy}
\usepackage{enumerate}
\usepackage[T1]{fontenc}
\usepackage[left=3cm,right=3cm,top=3.7cm,bottom=3.5cm]{geometry}
\usepackage{cleveref}
\usepackage{graphics} 
\usepackage{graphicx}
\usepackage{pstricks,pst-node} 
\usepackage{tikz} 
\usepackage{xcolor}

 \begin{document}
\theoremstyle{plain}
\newtheorem{deftn}{Definition}[section]
\newtheorem{lem}[deftn]{Lemma}
\newtheorem{prop}[deftn]{Proposition}
\newtheorem{thm}[deftn]{Theorem}
\newtheorem*{thm*}{Theorem}
\newtheorem*{prop*}{Proposition}
\newtheorem{cor}[deftn]{Corollary}
\newtheorem{conj}[deftn]{Conjecture}
\newtheorem{assump}{Assumption}[section]
\renewcommand{\theassump}{\Alph{assump}}
\makeatletter
\renewcommand{\thefigure}{\arabic{figure}}
\makeatother

\theoremstyle{definition}
\newtheorem{ex}[deftn]{Example}
\newtheorem{rk}[deftn]{Remark}

 
  \tikzset{math3d/.style=
 {x={(1.62cm,-0.27cm)},z={(2.67cm,0.57cm)},y={(0.9cm,2.27cm)}}}
 
  \tikzset{math2d/.style=
 {x={(1.1cm,-0.5cm)},z={(2.3cm,0.4cm)},y={(1cm,1.5cm)}}}

\newcommand{\A}{\mathcal{A}}
\newcommand{\B}{\mathcal{B}}
\newcommand{\C}{\mathcal{C}}
\newcommand{\CQ}{\mathcal{C}_Q}
\newcommand{\Cw}{\mathcal{C}_w}
\newcommand{\yjh}{\hat{y_j}}
\newcommand{\ylh}{\hat{y_1}}
\newcommand{\ynh}{\hat{y_n}}
\newcommand{\ykh}{\hat{y_k}}
\newcommand{\mjh}{\hat{\mu_j}}
\newcommand{\mkh}{\hat{\mu_k}}
\newcommand{\bmjh}{\hat{\boldsymbol{\mu}_j}}
\newcommand{\bmkh}{\hat{\boldsymbol{\mu}_k}}
\newcommand{\bmu}{\boldsymbol{\mu}}
\newcommand{\p}{\mathbb{P}}
\newcommand{\W}{{\bf W}}
\newcommand{\Aqnw}{\mathcal{A}_q (\mathfrak{n}(w))}
\newcommand{\Aqn}{\mathcal{A}_q (\mathfrak{n})}
\newcommand{\Yia}{Y_{i,a}}
\newcommand{\M}{{\bf M}}
\newcommand{\G}{{\bf G}}
\newcommand{\s}{\mathcal{S}}
\newcommand{\hs}{\mathcal{H}_{\mathcal{S}}}
\newcommand{\ps}{\mathcal{P}_{\mathcal{S}}}
\newcommand{\hw}{\mathcal{H}_{w}}
\newcommand{\pw}{\mathcal{P}_{w}}
\newcommand{\zn}{\mathbb{Z}^N}
\newcommand{\hyp}{\mathcal{H}}
\newcommand{\ds}{\Delta_{\mathcal{S}}}
\newcommand{\hgt}{\text{ht}}
\newcommand{\wt}{\text{wt}}
\newcommand{\Vect}{\text{Vect}}

 \begin{abstract}
 We show that for a finite-type Lie algebra $\mathfrak{g}$, the representation theory of quiver Hecke algebras provides a natural framework for the construction of Newton-Okounkov bodies associated to the quantum coordinate rings $\Aqnw$. When $\mathfrak{g}$ is simply-laced, we use Kang-Kashiwara-Kim-Oh's monoidal categorification to investigate the cluster theory of these bodies. In particular, our construction yields a simplex $\ds$ for every seed $\s$ of $\Aqnw$.
 We exhibit various properties of these simplices by characterizing their rational points, normal fans, and volumes. As an application, we prove an equality of rational functions relating Nakada's hook formula with the root partitions associated to cluster variables, suggesting further connections between cluster theory and the combinatorics of fully-commutative elements of Weyl groups. 
  \end{abstract}

\newcommand{\Address}{ \textsc{ \\ Universit\'e de Paris, \\  Institut de Math\'ematiques de Jussieu-Paris Rive Gauche, UMR 7586, \\ CNRS F-75013 Paris \\ FRANCE} \\
 E-mail: \texttt{elie.casbi@imj-prg.fr}}

\title[]{\Large{ {\bf NEWTON-OKOUNKOV BODIES FOR CATEGORIES OF MODULES OVER QUIVER HECKE ALGEBRAS}}}
 
   \author{Elie CASBI}

  \maketitle
 
     \date{}

\setcounter{tocdepth}{1}
\tableofcontents

\section{Introduction}

 Cluster algebras were introduced by Fomin-Zelevinsky \cite{FZ1} in order to study total positivity and canonical bases for quantum groups. They are commutative subalgebras of $\mathbb{Q}(x_1, \ldots , x_N)$ where $x_1, \ldots , x_N$ are some algebraically independent variables. More precisely, they are generated by a certain (finite or infinite) number of rational functions in $x_1, \ldots , x_N$, which are called \textit{cluster variables}. These cluster variables are defined inductively via a process called \textit{mutation}: the initial data consists in the variables $x_1,\ldots , x_N$ together with a quiver $Q$ (i.e. an oriented graph) with $N$ vertices. Such a data is called a \textit{seed}. The set of vertices of $Q$ splits into two parts: the \textit{unfrozen} part and the \textit{frozen} part. For every vertex $k$ in the unfrozen part, the mutation in the direction $k$ of the seed $((x_1, \ldots , x_N),Q)$ consists in producing a new variable $x'_k$ as well as a new quiver $Q'$. This new quiver has the same set of vertices than $Q$ but has different arrows. Thus one gets a new seed given by the variables $x_1, \ldots , x_{k-1}, x'_k , x_{k+1}, \ldots , x_N$ and the quiver $Q'$. In other words, a mutation in the direction $k$ replaces $x_k$ by $x'_k$ and leaves all the variables attached to the vertices $j \neq k$ unchanged. This process is involutive, i.e. the mutation in the direction $k$ transforms this new seed into the initial seed. Then one can iterate this procedure and all the new variables obtained from the initial seed after an arbitrary finite number of mutations are the cluster variables. Thus the data of an initial seed yields a unique (up to $\mathbb{Q}$-algebra isomorphism) cluster algebra, usually denoted by $\mathcal{A}((x_1, \ldots , x_N),Q)$. The monomials involving only cluster variables belonging to the same seed are called \textit{cluster monomials}.

 Here we are interested in a remarkable connection between cluster theory and representation theory involving the notion of \textit{monoidal categorification of cluster algebras} introduced by Hernandez-Leclerc \cite{HL}. Given a cluster algebra $\A$, a monoidal categorification of $\A$ is a monoidal category $\C$ such that there is a ring isomorphism $\A \longrightarrow K_0(\C)$ sending any cluster monomial onto the class of a simple object in $\C$. Here $K_0(\C)$ denotes the Grothendieck ring of $\C$. This is a strong assumption and is difficult to prove in general. The idea is to use results of cluster theory to understand the structure of the category $\C$. The first example of monoidal categorification appeared in \cite{HL}, where such a statement is proved for certain subcategories of finite-dimensional representations of quantum affine algebras in types $A_n$ and $D_4$. It was generalized to other simply-laced types by Nakajima \cite{Naka} using perverse sheaves on quiver varieties and Qin \cite{Qin}, and very recently by Kashiwara-Kim-Oh-Park \cite{KKOP21} who exhibited a large family of monoidal categorifications of cluster algebras via categories of modules over quantum affine algebras of arbitrary finite types, relying on former constructions due to Hernandez-Leclerc \cite{HLCrelle,HLJEMS}. In a different direction, Cautis-Williams \cite{CW} showed that the coherent Satake category of the loop Grassmannian is a monoidal categorification of certain affine coordinate rings. The works of Kang-Kashiwara-Kim-Oh \cite{KKK,KKKO3,KKKO} provided another class of examples of monoidal categorifications of cluster algebras via categories of modules over quiver Hecke algebras. Introduced by  Khovanov-Lauda \cite{KL} and Rouquier \cite{R}, quiver Hecke algebras are  $\mathbb{Z}$-graded algebras which categorify the negative part $U_q(\mathfrak{n})$ of the quantum group  $U_q(\mathfrak{g})$ (here $\mathfrak{g}$ is a symmetric Kac-Moody algebra).  Let $R-gmod$ denote the monoidal category of finite-dimensional graded modules over the quiver Hecke algebra corresponding to $\mathfrak{g}$. It was shown by Rouquier \cite{R} and Varagnolo-Vasserot \cite{VV} that the elements of the dual canonical basis of $U_q(\mathfrak{n})$ are in  bijection with the set of isomorphism classes of simple objects in $R-gmod$. The results of \cite{KKKO} show that the cluster structures on the quantum coordinate rings $\Aqnw$ (known from \cite{GLS}) admit monoidal categorifications via subcategories $\Cw$ of $R-gmod$. Here $w$ runs over the Weyl group $W$ of $\mathfrak{g}$. 
 
 In \cite{Casbi}, we showed that these monoidal categorification statements imply certain relationships between natural orderings of different natures.
  On the one hand, the cluster-theoretic \textit{dominance order} was introduced by F. Qin \cite{Qin}. This order depends on the choice of a seed: for any seed $\s$ in a cluster algebra $\A$, the dominance order for $\s$, denoted  $\preccurlyeq_{\s}$, is a partial ordering on the set of Laurent monomials in the cluster variables of $\s$. This order was used in \cite{Qin} to study certain bases in $\A$, called common triangular bases. 
 On the other hand, we can consider orderings of representation-theoretic nature, arising from parametrizations of simple objects in a monoidal categorification $\C$ of $\A$. In the case of $R-gmod$ or $\Cw$, simple modules are parametrized by  \textit{dominant words} (or \textit{root partitions}): choose a labeling $I$ of the simple roots of $\mathfrak{g}$ and fix a total order $<$ on $I$. We still denote $<$ the induced lexicographic order on the set $\mathcal{M}$ of all (finite) words on the alphabet $I$. 
    Then there exists a finite subset $\mathcal{GL}$ of $\mathcal{M}$ in bijection with the positive roots of $\mathfrak{g}$ such that the set of dominant words is
      $$ \M := \{ {\bf j}_1 \cdots {\bf j}_k \mid {\bf j}_1 , \ldots , {\bf j}_k \in \mathcal{GL} , {\bf j}_1 \geq \cdots \geq {\bf j}_k \} . $$
     The bijection between $\mathcal{GL}$ and $\Phi_{+}$ makes $<$ into an ordering on $\Phi_{+}$. Fix $w \in W$ and consider the unique reduced expression ${\bf w} = (i_1, \ldots , i_N)$ of $w$ corresponding to the restriction of $<$ to $\Phi_{+}^{w}$. Let $\mathcal{S}^{\bf w}$ denote the seed of $\Aqnw$ corresponding to ${\bf w}$ via {{\cite[Theorem 11.2.2]{KKKO}}}.
 In the case of a type $A_n$ underlying Lie algebra $\mathfrak{g}$ with $<$ being the natural order on $I = \{1 , \ldots , n \}$ and $w=w_0$ the longest element of $W$, a compatibility between $<$ and $\preccurlyeq_{\mathcal{S}^{\bf w_0}}$ was exhibited in \cite{Casbi}. We showed ({{\cite[Theorem 6.2]{Casbi}}}) that this compatibility implies certain relationships between dominant words and $g$-vectors. 
 Recently, Kashiwara-Kim \cite{KK} related the dominance orders corresponding to the seeds of $\Aqnw$ with the homogeneous degrees of certain renormalized $R$-matrices in $\Cw$. For every seed $\s$, they also constructed natural maps ${\bf g}_{\mathcal{S}}^{L}$ and ${\bf g}_{\mathcal{S}}^{R}$ from the set of classes of self-dual simple objects in $\Cw$ into $\mathbb{Z}^{l(w)}$. Using tropical transformation, they showed that these maps are bijective (after localizing the frozen variables) and indeed associate a cluster variable with its $g$-vector.  Moreover, in the case where $\s$ is one of the seeds given by {{\cite[Theorem 11.2.2]{KKKO}}},  they relate ${\bf g}_{\mathcal{S}}^{R}(M)$ to the cuspidal decomposition of $M$ ({{\cite[Proposition 3.14]{KK}}}).
 In Section~\ref{seedsection},  we use these results to give an explicit description of the seed $\mathcal{S}^{{\bf w}}$ in terms of dominant words for an underlying Lie algebra $\mathfrak{g}$ of type $A_n, D_n$ or $E_n$ and an arbitrary order $<$ on $I$.

 \begin{thm*}[Theorem~\ref{initpara}]
 Fix $w \in W$ and a total order $<$ on $I$. Denote by ${\bf w}$ the corresponding reduced expression of $w$ and let $(x_1, \ldots , x_N)$ be the cluster variables of the seed $\mathcal{S}^{{\bf w}}$ of $\Aqnw$. Let $\mu_k$ denote the dominant word such that $x_k=[L(\mu_k)]$ for every $k \in J$. 
 Write the canonical factorization of $\mu_k$ as 
 $$ \mu_k = ({\bf i}_{N})^{c_N} \cdots  ({\bf i}_{1})^{c_1} . $$
 Then the tuple  $(c_1, \ldots , c_N)$ is given by 
 $$ c_j =  \begin{cases}
        1  \quad  & \text{if $j \leq k$ and $i_j=i_k$} \\
        0 \quad &\text{otherwise.}
        \end{cases} $$
 \end{thm*}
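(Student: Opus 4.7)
The plan is to combine three ingredients: the explicit description of $\mathcal{S}^{\textbf{w}_<}$ from \cite{KKKO}, Kashiwara-Kim's Proposition 3.14 of \cite{KK} relating $g^R$-vectors to cuspidal decompositions, and the cluster-theoretic identity $\mathbf{g}_{\mathcal{S}^{\textbf{w}_<}}^{R}(x_k) = e_k$ for the $k$-th cluster variable in its own seed.

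First, by Theorem 11.2.2 of \cite{KKKO}, $x_k$ is the class of the determinantial module $M(s_{i_1}\cdots s_{i_k}\varpi_{i_k}, \varpi_{i_k})$. A telescoping computation using $\varpi_{i_k} - s_{i_j}\varpi_{i_k} = \delta_{i_j, i_k}\alpha_{i_j}$ yields
$$ \wt(x_k) = \varpi_{i_k} - s_{i_1}\cdots s_{i_k}\varpi_{i_k} = \sum_{j=1}^{k} s_{i_1}\cdots s_{i_{j-1}}\bigl(\varpi_{i_k} - s_{i_j}\varpi_{i_k}\bigr) = \sum_{\substack{j \leq k \\ i_j = i_k}} \beta_j, $$
where $\beta_j := s_{i_1}\cdots s_{i_{j-1}}(\alpha_{i_j})$. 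This matches the weight predicted by the tuple $(c_1,\ldots,c_N)$ in the statement; note, however, that weight alone is insufficient (e.g.\ already in type $A_2$ with reduced expression $(1,2,1)$ one has $\beta_1 + \beta_3 = \beta_2$), so one additional input is needed.

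That additional input is Proposition 3.14 of \cite{KK}: for the KKKO seed $\mathcal{S}^{\textbf{w}_<}$, there is an explicit, invertible (triangular) transformation relating the $g^R$-vector of a simple module $L(\mu)$ to the exponents $(c_1, \ldots, c_N)$ of its canonical factorization $\mu = (\textbf{i}_N)^{c_N}\cdots(\textbf{i}_1)^{c_1}$. Applied to $\mathbf{g}_{\mathcal{S}^{\textbf{w}_<}}^{R}(x_k) = e_k$, the inversion of this transformation outputs exactly $c_j = 1$ for $j \leq k$ with $i_j = i_k$ and $c_j = 0$ otherwise, yielding the claim.

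The main obstacle is unfolding Proposition 3.14 of \cite{KK} into a concrete combinatorial identity compatible with the reduced expression $\textbf{w}_<$ and the parametrization of $\mathcal{GL}$ by good Lyndon words, and then performing the inversion cleanly on $e_k$. A potentially smoother alternative is to bypass $g$-vectors entirely and argue module-theoretically: exhibit $M(s_{i_1}\cdots s_{i_k}\varpi_{i_k}, \varpi_{i_k})$ as the simple head of the convex ordered convolution $L(\beta_{j_1}) \circ \cdots \circ L(\beta_{j_m})$, where $j_1 > \cdots > j_m$ enumerate $\{j \leq k : i_j = i_k\}$, so that the dominant word of the head is the concatenation $\textbf{i}_{j_1}\cdots\textbf{i}_{j_m}$ and the claimed $(c_j)$ can be read off directly; the weight computation above then serves as a consistency check.
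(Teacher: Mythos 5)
Your proposal is correct and follows essentially the same route as the paper: the paper's proof of Theorem~\ref{initpara} likewise combines Proposition 3.14 of \cite{KK} (which identifies ${\bf g}^{R}_{\mathcal{S}^{\bf w}}([L(\mu_k)])$ with the difference transform $(c_{j,k}-c_{j_{+},k})_j$ of the cuspidal exponents) with the observation that this $g$-vector is ${\bf e}_k$, and then inverts the triangular transformation to read off $c_{j,k}=1$ exactly when $j\leq k$ and $i_j=i_k$. The weight telescoping and the alternative head-of-convolution argument you mention are not used in the paper, but your main argument is the paper's argument, with the inversion step merely left implicit where the paper spells it out.
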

 
This statement is thus a generalization of {{\cite[Theorem 6.1]{Casbi}}} where it was proved for $\mathfrak{g}$ of type $A_n$, $I = \{1, \ldots , n \}$ ordered with respect to the natural ordering and $w=w_0$ is the longest element of the $W$ (in this case $\Cw$ is the whole category $R-gmod$).
  As a consequence of this Theorem, the lexicographic ordering $<$ on $\mathcal{GL}$ induces a total ordering on the set of cluster variables of every seed in $\Aqnw$. Such orderings play a key role in the recent work of Rietsch-Williams \cite{RW} (see {{\cite[Section 7]{RW}}}) and hence provide us with a natural motivation for the construction of Newton-Okounkov bodies using the monoidal categorification framework.

  Newton-Okounkov bodies were introduced by Kaveh-Khovanskii \cite{KKh} and Lazarsfeld-Mustata \cite{LM} following an idea of Okounkov \cite{Oko}. Their construction requires the following elementary objects. Let $\A$ be a graded algebra over an algebraically closed  field ${\bf k}$, i.e. one has a decomposition $\A = \bigoplus_{n \geq 0} {\A}_n $ where the $\A_n$ are vector subspaces of $\A$ satisfying  ${\A}_n {\A}_m \subset {\A}_{n+m} $ for any $n,m \geq 0$ with  $ {\A}_0 = {\bf k}$. Assume we have a \textit{valuation}, i.e. a map $\Psi$  from $\A$ to $\zn$ equipped with a total ordering satisfying certain axioms (see Definition~\ref{defvalu}). Under certain assumptions (see Section~\ref{NObodies}), one has $ \dim {\A}_n = \sharp \Psi({\A}_n \setminus \{0\})$ and thus this valuation provides a useful tool to study the asymptotics of $\dim {\A}_n$ as $n \rightarrow \infty$. The Newton-Okounkov body associated to $\A$ is defined as 
 $$  \Delta(\A) := \overline{\text{ConvexHull} \left( \bigcup_{n \geq 1}  \frac{1}{n} \Psi({\A}_n \setminus \{0\}) \right)}. $$
It is a convex compact set but not a polytope in general. The main property of these bodies is that their volume is closely related to the asymptotic behaviour of the Hilbert function of $\A$ (see e.g. {{\cite[Theorem 2.31]{KKh}}}). The theory of Newton-Okounkov bodies has found applications in various areas, in particular in algebraic geometry \cite{BC} and representation theory \cite{BossPhD,LittFang,OF,Kaveh}. Recently,  Newton-Okounkov bodies appeared in a different context in the work of Rietsch-Williams \cite{RW}. In this case, Newton-Okounkov bodies are associated to certain algebras of global sections of line bundles over the Grassmannian $Gr_{n-k}(\mathbb{C}^n)$ (the variety of codimension $k$ vector subspaces in $\mathbb{C}^n$). The valuation is defined from the cluster algebra structure on the ring of functions on $Gr_{n-k}(\mathbb{C}^n)$.

In this paper we show that the representation theory of quiver Hecke algebras associated with a finite type simple Lie algebra $\mathfrak{g}$ provides a natural setting to construct Newton-Okounkov bodies.  We prove that the parametrizations of simple objects in $\Cw$ in terms of dominant words provide natural valuations $\Psi_w$ from $\Aqnw$ to $\mathbb{Z}^{l(w)}$ (Section~\ref{valudef}). There is also a natural choice of grading $x \longmapsto |x| \in \mathbb{N}$  on $\Aqnw$ coming from the structure of the category $\Cw$ (Section~\ref{grading}). This allows us to associate Newton-Okounkov bodies to the algebras $\Aqnw$ and more generally to any (graded) subalgebra of $\Aqnw$ (Section~\ref{NOCw}). Two kind of subalgebras will be of particular interest for us: the algebras $\Aqnw$ themselves, and the (free) subalgebras generated by the cluster variables of a same seed in $\Aqnw$.

Assume $\mathfrak{g}$ is simply-laced and let $\s = ((x_1, \ldots , x_N),B)$ be a seed in $\Aqnw$. The Newton-Okounkov body $\Delta(\Aqnw)$ is a $N-1$ dimensional simplex inside an affine hyperplane $\hyp$ in $\mathbb{R}^N$ (see Section~\ref{hyperplane}). The Newton-Okounkov body $\ds$ is also a simplex, contained in $\Delta(\Aqnw)$. In Sections~\ref{sectionds} and ~\ref{normalfan}, we prove the following properties of the simplices $\ds$:

 \begin{prop*}[cf. Proposition~\ref{propds}]
   The rational points of $\ds$ correspond to the cluster monomials for $\s$ in the following sense:
$$ \text{$x$ is a cluster monomial for $\s$} \enspace \Leftrightarrow \enspace \frac{1}{|x|} \Psi(x) \in \ds . $$
   Moreover, any rational point in $\ds$ is of the form $\frac{1}{|x|} \Psi(x)$ for some cluster monomial $x$ in $\Aqnw$. 
   \end{prop*}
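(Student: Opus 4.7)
My strategy is to first describe $\ds$ explicitly as a geometric simplex in $\mathbb{R}^N$ and then deduce both statements of the proposition from this description.

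\textbf{Identification of $\ds$.} The cluster variables $x_1, \ldots, x_N$ of $\s$ being algebraically independent, the subalgebra they generate inside $\Aqnw$ admits as a basis the cluster monomials $x^a := x_1^{a_1}\cdots x_N^{a_N}$ indexed by $a \in \mathbb{Z}_{\geq 0}^N$ (up to quasi-commutation factors). By the monoidal categorification theorem of \cite{KKKO}, each class $[x^a]$ is, up to $q$-shift, the class of a simple module in $\Cw$. Combined with the valuation property of $\Psi$ and the additivity of the grading, this yields
\[
\Psi\bigl([x^a]\bigr) = \sum_{i=1}^N a_i\,\Psi(x_i), \qquad |x^a| = \sum_{i=1}^N a_i\,|x_i|.
\]
Setting $v_i := \Psi(x_i)/|x_i|$, the point $\tfrac{1}{|x^a|}\Psi([x^a]) = \sum_i \tfrac{a_i|x_i|}{|x^a|}\,v_i$ is a rational convex combination of the $v_i$'s, and as $a$ varies such combinations are dense in the simplex with vertices $v_1, \ldots, v_N$. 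Affine independence of the $v_i$'s follows from the algebraic independence of the $x_i$'s, so $\ds$ is exactly this $(N-1)$-dimensional simplex.

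\textbf{Deducing both claims.} For the second assertion, any rational $p = \sum t_i v_i \in \ds$ (with $t_i \in \mathbb{Q}_{\geq 0}$, $\sum t_i = 1$) is realized by the cluster monomial $x^a$ with $a_i := L\,t_i/|x_i|$, where $L \in \mathbb{Z}_{\geq 1}$ is chosen so that each $a_i$ is a non-negative integer; then $|x^a| = L$ and $\tfrac{1}{|x^a|}\Psi([x^a]) = p$. For the equivalence, $(\Rightarrow)$ is immediate from the previous step. For $(\Leftarrow)$, let $M$ be a simple in $\Cw$ with $\tfrac{1}{|M|}\Psi([M]) \in \ds$; by affine independence of the vertices this point admits a unique expression $\sum t_iv_i$, with $t_i \in \mathbb{Q}_{\geq 0}$ forced by the rationality of all the data. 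Setting $a_i := |M|t_i/|x_i| \in \mathbb{Q}_{\geq 0}$ gives $\Psi([M]) = \sum a_i\,\Psi(x_i)$ and $|M| = \sum a_i|x_i|$. Once the integrality $a_i \in \mathbb{Z}$ is granted, $M' := x^a$ is a cluster monomial whose class is simple and satisfies $\Psi([M']) = \Psi([M])$; the injectivity of $\Psi$ on isomorphism classes of self-dual simples then forces $[M]$ to equal $[M']$ up to $q$-shift, identifying $M$ with a monoidal cluster monomial for $\s$.

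\textbf{Main obstacle.} The delicate point is the integrality of the $a_i$'s in the $(\Leftarrow)$ direction, which is equivalent to the statement that the augmented vector $(\Psi([M]),|M|) \in \mathbb{Z}^{N+1}$ belongs to the $\mathbb{Z}$-sublattice generated by $\{(\Psi(x_j),|x_j|)\}_{j=1}^N$. For the explicit seed described in Theorem~\ref{initpara} this follows from the lower-triangular shape of the vectors $\Psi(x_j)$ exhibited there. For a general seed $\s$ it is obtained by propagating this property along mutations, using the results of Kashiwara-Kim \cite{KK}: their maps $\mathbf{g}_{\mathcal{S}}^R$ identify cluster variables with a $\mathbb{Z}$-basis after inverting the frozen variables, and the relationship between $\Psi$ and these $g$-vectors forces any simple whose rescaled valuation falls in the positive cone spanned by the $v_i$'s to decompose integrally in this basis.
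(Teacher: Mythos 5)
Your outline coincides with the paper's own proof: identify $\ds$ with the simplex on the vertices $\Psi(x_i)/|x_i|$, obtain the ``if'' direction from additivity of $\Psi$ and of the grading, and reduce both the ``only if'' direction and the statement about rational points to the integrality of the coefficients $a_i$. The problem is that the one step carrying all the content, the integrality for an \emph{arbitrary} seed, is asserted rather than proved. You say it is ``obtained by propagating this property along mutations, using the results of Kashiwara-Kim,'' but you give no mechanism for the propagation, and the results you invoke do not apply as stated: the bijectivity of ${\bf g}^{R}_{\s}$ after localizing the frozen variables, and its relation to the cuspidal data underlying $\Psi$ ({{\cite[Proposition 3.14]{KK}}}), are only available for the distinguished seeds $\mathcal{S}^{\bf w}$, not for a general seed $\s$; and your closing sentence (``forces any simple whose rescaled valuation falls in the positive cone \ldots to decompose integrally'') restates the desired conclusion rather than deriving it.

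The paper closes exactly this gap with Corollary~\ref{invert}: the matrix $\mathcal{M}_{\s}$ whose columns are $\Psi(x_1),\ldots,\Psi(x_N)$ lies in $GL_N(\mathbb{Z})$ for every seed. The mechanism is elementary and uses nothing beyond Theorem~\ref{initpara} and the valuation property: applying $\Psi$ to the exchange relation $x'_k x_k = \prod_{b_{ik}>0} x_i^{b_{ik}} + \prod_{b_{ik}<0} x_i^{-b_{ik}}$ gives $\Psi(x'_k) = -\Psi(x_k) + \sum_i [\eta b_{ik}]_{+}\,\Psi(x_i)$ for one of the two signs $\eta=\pm1$, so a mutation changes $\det \mathcal{M}_{\s}$ only by a sign; at the seed $\mathcal{S}^{\bf w}$ the matrix is unitriangular by Theorem~\ref{initpara}, hence $|\det \mathcal{M}_{\s}| = 1$ for every seed, and then $\bmu = \mathcal{M}_{\s}\,{}^t(a_1,\ldots,a_N)$ with $\bmu \in \zn$ forces $a_i \in \mathbb{Z}$, hence $a_i\in\mathbb{N}$. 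With this in hand the rest of your argument goes through; note also that no augmented lattice in $\mathbb{Z}^{N+1}$ is needed, since $|M| = \langle \boldsymbol{\lambda}, \Psi([M])\rangle$ is a linear function of the valuation. A smaller point: your claim that affine independence of the $\Psi(x_i)/|x_i|$ ``follows from the algebraic independence of the $x_i$'' is not valid for valuations in general (algebraically independent elements can have linearly dependent valuation vectors); in the paper this independence is again a consequence of Corollary~\ref{invert}, fed into Lemma~\ref{trivlem} to identify $\ds$.
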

   
   \smallskip
   
   \begin{thm*}[cf. Theorem~\ref{mainthm}]
    The normal fan of the simplex $\ds$ can be explicitly related to the dominance order for $\s$. More precisely, let $\overrightarrow{\mathcal{N}^{\s}}$ be the linear cone such that for any simple $M$ in $\Cw$ one has
 $$ \Psi([M]) + \overrightarrow{\mathcal{N}^{\s}} = \{ \Psi([N]) , [N] \preccurlyeq_{\s} [M] \} . $$
   There exists a unique explicit universal transformation $T \in \mathcal{M}_N(\mathbb{Q})$ (by universal we mean independent of the seed), such that for any seed $\s$, the cone $T \overrightarrow{\mathcal{N}^{\s}}$ is a face of the normal fan of $\ds$. The explicit expression of $T$ is given by Corollary~\ref{cordominance}.
 \end{thm*}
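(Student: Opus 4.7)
The plan is to proceed in three steps: first describe the vertices of $\ds$ explicitly, then identify its normal fan via tangent cones at these vertices, and finally match these cones with $T\overrightarrow{\mathcal{N}^{\s}}$ using the $g$-vector formalism of Kashiwara--Kim.

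Because $\Psi$ is a valuation and the grading is additive on $\Aqnw$, for any cluster monomial $M=x_1^{a_1}\cdots x_N^{a_N}$ we have $\Psi([M])=\sum_k a_k \Psi([x_k])$ and $|[M]|=\sum_k a_k|[x_k]|$. Combined with Proposition~\ref{propds}, this shows that $\ds$ is the simplex with vertices
$$ v_k \;=\; \frac{1}{|[x_k]|}\,\Psi([x_k]), \qquad k=1,\ldots,N, $$
and its normal fan has one maximal cone $\mathcal{N}_k$ per vertex: $\mathcal{N}_k$ is the dual, inside the affine hyperplane $\hyp$, of the tangent cone spanned by the differences $v_j-v_k$ for $j\neq k$. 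I would then translate the dominance cone $\overrightarrow{\mathcal{N}^{\s}}$ into $g$-vector coordinates, where the dominance order becomes Qin's standard cone generated by the columns of the exchange matrix of $\s$ restricted to unfrozen indices. By \cite[Proposition~3.14]{KK}, Kashiwara--Kim's map $\mathbf{g}_{\s}^{R}$ records the cuspidal decomposition, and hence is a linear function on $K_0(\Cw)$ related to $\Psi$ by a linear change of coordinates. Applying Theorem~\ref{initpara} to the initial seed $\mathcal{S}^{\textbf{w}_{<}}$, whose $g$-vectors are the standard basis and whose dominant words $\mu_k$ are given explicitly, determines the matrix $T$ of this change of coordinates. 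Since Theorem~\ref{initpara} involves only $\textbf{w}_{<}$, the matrix $T$ depends only on the order $<$ on $I$ and not on the ambient seed, which is the universality statement.

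Finally, I would push the generators of $\overrightarrow{\mathcal{N}^{\s}}$ through $T$: Qin's $y$-directions become $\Psi$-coordinate differences coming from the exchange relations at $\s$, and by the additivity of $\Psi$ on cluster monomials established in the first step, these are exactly the edge vectors of the tangent cone of $\ds$ at a distinguished vertex. Dualizing inside $\hyp$ then identifies $T\overrightarrow{\mathcal{N}^{\s}}$ with a face of the normal fan of $\ds$. The principal obstacle, and the place where the results of \cite{KK} are essential, will be to verify that this identification is seed-independent: one must check that the passage from $g$-vectors to $\Psi$ commutes with the mutation behaviour of both the dominance cone and the tangent cones of $\ds$. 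This compatibility is what forces the explicit form of $T$ stated in Corollary~\ref{cordominance}.
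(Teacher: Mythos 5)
Your first step (vertices of $\ds$, additivity of $\Psi$ on cluster monomials, normal fan described vertex by vertex) and your use of Theorem~\ref{initpara} together with {{\cite[Proposition 3.14]{KK}}} to pin down a candidate $T$ at the initial seed are consistent with what the paper does. But the substantive content of the statement is precisely the point you defer to the end as ``the principal obstacle'': the seed-independence of $T$. As written, your argument does not prove it. The relation between ${\bf g}^{R}_{\s}$ and $\Psi$ is \emph{not} a linear change of coordinates for an arbitrary seed $\s$ --- {{\cite[Proposition 3.14]{KK}}} only gives this for the distinguished seeds $\mathcal{S}^{\bf w}$, and $g$-vectors with respect to other seeds are related to cuspidal data by piecewise-linear (tropical) maps. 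So ``translating the dominance cone into $g$-vector coordinates and changing coordinates back by a fixed matrix'' is exactly the assertion to be proved, not a tool. The paper closes this gap by a mutation-recursion argument: it shows that three families of vectors --- the $\Psi$-images of cluster variables (Lemma~\ref{epsilonmutation}), the vectors $\bmjh=\Psi(\yjh)$ generating the dominance cone (Lemma~\ref{tropical}), and the facet normals ${\bf N}_j^{\s}$ of $\ds$ (Lemma~\ref{mutNvector}) --- all satisfy one and the same tropical $\epsilon$-mutation rule, where crucially the sign $\eta_k$ is the sign of $\bmkh$ for the lexicographic order on dominant words, not the usual $c$-vector sign-coherence. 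An explicit computation at the initial seed, using the dominant words of Theorem~\ref{initpara} to determine ${\bf N}_j^{0}$ (Proposition~\ref{Nisninit}), then propagates by induction along mutations (Lemma~\ref{Nisn}), and this is what makes the single map $T$ of Corollary~\ref{cordominance} work for every seed. Without establishing that the normals of $\ds$ and the $\bmjh$ obey matching recursions with a common sign, there is no reason a universal $T$ exists at all.

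A secondary confusion: in your last step you send the generators of $\overrightarrow{\mathcal{N}^{\s}}$ to ``edge vectors of the tangent cone of $\ds$ at a distinguished vertex'' and then dualize. In the correct statement $T\bmjh$ is itself a ray of the normal fan --- namely the facet normal ${\bf n}_j^{\s}$ (the normal ${\bf N}_j^{\s}$ projected into the direction of $\hyp$), and the cone $T\overrightarrow{\mathcal{N}^{\s}}$ spanned by the ${\bf n}_j^{\s}$, $j\in J_{ex}$, is the normal cone to the frozen face of $\ds$. Identifying the images with tangent-cone edges and then dualizing would prove a different (and weaker) assertion, and the dual of the image cone under a fixed linear map need not coincide with the image of the dual, so this step would not deliver that $T\overrightarrow{\mathcal{N}^{\s}}$ is a face of the normal fan.
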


 This statement provides a natural geometric interpretation of the \textit{generalized parameters} $\mjh$ introduced in {{\cite[Section 4.2]{Casbi}}}. 
The proof is based on considering an \textit{epsilon tropical mutation} in the sense of \cite{Nakanishi} satisfied by the rays of the normal fan of $\ds$. This mutation relation is strongly related to the dominance order for $\s$. Then the desired transformation $T$ can be explicitly known from Theorem~\ref{initpara}. 
 Note that under the enlightment of \cite{KK}, one can think about dominant words as certain $g$-vectors and thus this statement can be seen as an analog of the well-known duality between $c$-vectors and $g$-vectors (see for instance \cite{FG,HPS}). However in the purely cluster-theoretic setting, one always considers $g$-vectors and $c$-vectors \textit{with respect to an initial seed}, whereas our statement is of a more representation-theoretic nature and does not rely on such a choice.

  In Section~\ref{hooksection}, we focus on the case of finite type cluster algebras. In this case, the simplices $\ds$ cover all of the simplex $\Delta(\Aqnw)$. Using Theorem~\ref{initpara}, we prove the following equality of rational functions:
  
  \begin{thm*}[cf. Theorem~\ref{prophook}]
Assume $w \in W$ is such that the cluster algebra $\Aqnw$ is of finite type. Then 
\begin{equation} \label{eqnintro}
 \prod_{\beta \in \Phi_{+}^{w}} \frac{1}{\beta} = \sum_{\s} \prod_{1 \leq j \leq N} \frac{1}{\beta_j^{\s}} . 
 \end{equation}
\end{thm*}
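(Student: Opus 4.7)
The plan is to deduce the identity from the simplicial decomposition of $\Delta(\Aqnw)$ by the subsimplices $\ds$ (the geometric content of the finite-type hypothesis) combined with a Laplace-transform reading of the two sides. Theorem~\ref{initpara} plays the role of an explicit dictionary: it records the dominant words, and hence the weights $\beta_j^\s$, attached to the cluster variables of the distinguished seed $\mathcal{S}^{\textbf{w}_{<}}$, so that all other seeds can be tracked by mutation.

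First, I would promote the covering $\Delta(\Aqnw) = \bigcup_\s \ds$ to an honest polytopal decomposition: by Proposition~\ref{propds} the rational points of $\ds$ are precisely the monoidal cluster monomials for $\s$, so two adjacent simplices $\ds$ and $\ds'$ can only meet along the face spanned by the cluster variables shared by the two seeds. Passing to cones from the origin, the family $\{C_\s := \mathrm{Cone}(\ds)\}$ becomes a simplicial fan refining the big simplicial cone $C := \mathrm{Cone}(\Delta(\Aqnw))$, whose $N$ primitive generators correspond to the positive roots $\beta \in \Phi_{+}^{w}$.

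Second, I would apply the standard Laplace formula for a simplicial cone $K$ with primitive generators $v_1,\dots,v_N$,
$$\int_K e^{-\langle \xi, x\rangle}\, dx = \frac{|\det(v_1,\dots,v_N)|}{\prod_k \langle \xi, v_k\rangle},$$
to each $C_\s$ and to $C$. Since the $C_\s$ tile $C$ without interior overlap, the integrals add. Choosing $\xi$ so that the pairings $\langle \xi, v_\beta\rangle$ and $\langle \xi, v_k^\s\rangle$ specialise to $\beta$ and $\beta_k^\s$ respectively — which is possible because, by Theorem~\ref{initpara}, the vectors $v_k^\s$ encode these weights via the dominant-word factorisation, while the remaining seeds are reached from $\mathcal{S}^{\textbf{w}_{<}}$ by cluster mutations — yields the claimed identity up to the determinant factors.

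The main obstacle is the unimodularity step: to cancel the volume factors cleanly, $|\det C_\s|$ must be independent of $\s$ and equal to $|\det C|$. This should follow from the fact that cluster mutation at the level of the valuation $\Psi$ is an $\mathrm{SL}_N(\mathbb{Z})$ transformation, a consequence of the bijection $\mathbf{g}^R_\s$ of Kashiwara-Kim \cite{KK}. An alternative route avoiding volumes altogether is a direct induction on the mutation graph: one uses the elementary partial-fraction identity $\frac{1}{ab(a+b)} = \frac{1}{a(a+b)^2} + \frac{1}{b(a+b)^2}$ (the $A_2$ prototype already visible in the two-seed case) as the atomic building block and checks that each single mutation contributes exactly one such rearrangement, with Theorem~\ref{initpara} providing the book-keeping for how weights are transformed.
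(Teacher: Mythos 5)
Your main argument coincides with the paper's own proof: the paper writes $\prod_{\beta \in \Phi_{+}^{w}} \frac{1}{\beta}$ as the integral of $e^{-\langle \boldsymbol{\beta}, \cdot \rangle}$ over the positive orthant, splits it over the cones $C_{\s}$ over the simplices $\ds$ (which tile the orthant up to measure zero precisely because of finite cluster type), and uses $|\det \mathcal{M}_{\s}|=1$ from Corollary~\ref{invert} together with the pairing $\langle \boldsymbol{\beta}, \Psi(x_j^{\s}) \rangle = \wt(\mu_j^{\s}) = \beta_j^{\s}$ to show each cone contributes $\prod_j 1/\beta_j^{\s}$ (Lemma~\ref{technical}). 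Your Laplace-formula-plus-unimodularity plan is exactly this computation, so the proposal is correct and takes essentially the same route as the paper.
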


Here $\beta_1^{\s} , \ldots , \beta_N^{\s}$ denote the weights of the cluster variables of every seed $\s$. In \cite{Nakada}, Nakada proved a \textit{colored hook formula} that has a very similar form. However the underlying combinatorics is a priori very different from cluster theory. We end the section by discussing an example. We also note that possibly up to some universal constant, the formula obtained for $Vol(\Delta(\Aqnw))$ is the Peterson-Proctor hook formula (see for instance {{\cite[Section 8.1]{KR}}}) whereas $Vol(\ds)$ is similar but involves the heights of the cluster variables of $\s$ instead of the positive roots in $\Delta_{+}^{w}$.

 The paper is organized as follows: we begin in Section~\ref{background} with some reminders on the representation theory of quiver Hecke algebras (Section~\ref{remindKLR}) especially in the context of monoidal categorification of cluster algebras, following \cite{KKKO,KK} (Section~\ref{remindKK}). We also recall (Section~\ref{remindCasbi}) several basic objects introduced in \cite{Casbi}.
  In Section~\ref{seedsection}, we use the recent results of Kashiwara-Kim \cite{KK} to prove the main result of this paper (Theorem~\ref{initpara}).
   In Section~\ref{NObodyKLR} we construct certain Newton-Okounkov bodies in a natural way using the setting of the previous sections.
   Section~\ref{sectionds} is devoted to exhibiting some properties of the Newton-Okounkov bodies corresponding to monoidal seeds via this construction. Then we study the behaviour of these bodies under cluster mutation. The main tool is the tropical $\epsilon$-mutation in the sense of \cite{Nakanishi} (see Section~\ref{epsilonmut}). We show in Section~\ref{normalfan} that the normal fan of these bodies  essentially describes Qin's dominance order.
   Finally in Section~\ref{hooksection} we prove Theorem~\ref{prophook} and conclude with some possible connections with the combinatorics of colored hook formulas studied in \cite{Nakada}. 
 
  \bigskip
  
  \textit{The author is supported by the European Research Council under the European Union's Framework Programme H2020 with ERC Grant Agreement number 647353 Qaffine.}

  \subsection*{Acknowledgements}

  First of all I would like to thank my advisor David Hernandez for his constant support and encouragements as well as for many useful conversations. I would also like to thank S\'ebastien Boucksom for answering my questions about Newton-Okounkov bodies. I also thank Vincent Pilaud, Lara Bossinger and Joel Kamnitzer for very inspiring discussions. Finally I thank Hironori Oya for making valuable remarks on the first version of this work.

  \section{Quiver Hecke algebras and monoidal categorifications of quantum coordinate rings}
  \label{background}
  
 In this section we recall some representation-theoretic background and we fix notations. We begin with some reminders about Kleshchev-Ram's classification of finite-dimensional irreducible representations of finite type quiver Hecke algebras \cite{KR}. Then we recall how quiver Hecke algebras provide a useful framework for monoidal categorifications of cluster algebras following \cite{KKKO} and more recently \cite{KK}. Finally we recall several technical tools from \cite{Casbi} that will be  useful in the next sections. 
  
\subsection{General reminders on quiver Hecke algebras} 
\label{remindKLR}

 Let $\mathfrak{g}$ be a semisimple Lie algebra of finite type, $I$ the set of vertices of the Dynkin diagram of $\mathfrak{g}$. We use the following standard Lie-theoretic notations: $\Pi = \{ \alpha_i , i \in  I \}$ stands for the set of simple roots, $Q_{+} := \bigoplus_{i \in I} \mathbb{N} \alpha_i$, and $\Phi_{+}$ denotes the set of positive roots. 
 We also let $\mathcal{M}$ denote the set of (finite) words over the alphabet $I$. For $\nu = h_1, \ldots , h_r  \in \mathcal{M}$, we define the \textit{weight} of $\nu$ as the element of $Q_{+}$ given by 
  $$ \wt(\nu) := \sum_{i \in I}  \sharp \{k, h_k = i \} \alpha_i . $$
  To any $\beta \in Q_{+}$ one associates a $\mathbb{Z}$-graded associative algebra $R(\beta)$ defined by generators and relations. We refer to \cite{KL,KR,KKK} for precise definitions. Let us only outline the fact that among the generators of $R(\beta)$, one has a family of idempotents $ \{ e(\nu) , \text{$\nu \in \mathcal{M}$ such that $\wt(\nu) = \beta$} \}$, satisfying the relations 
  $$e(\mu)e(\nu) = \delta_{\mu,\nu} e(\nu) . $$
   This family of algebras is called quiver Hecke algebras. For any $\beta \in Q_{+}$, one denotes by $R(\beta)$-gmod the category of finite-dimensional graded $R(\beta)$-modules. One also sets 
  $$ R-gmod := \bigoplus_{\beta} R(\beta)-gmod . $$
  The main property of quiver Hecke algebras is that the category $R-gmod$ categorifies the quantum coordinate ring $\Aqn$ (which is isomorphic to the positive part of the quantum group $U_q(\mathfrak{g})$) in a way that sends the isomorphism classes of simple objects in $R-gmod$ bijectively onto the elements of the dual canonical basis of $\Aqn$. 
  
  The construction of irreducible finite-dimensional representations over quiver Hecke algebras of finite type was done by Kleshchev-Ram \cite{KR}. This parametrization uses the combinatorics of Lyndon words, or root partitions. It has been generalized by Kleshchev \cite{Kle} and McNamara \cite{McN} to affine type quiver Hecke algebras. 
 Recall that $\mathcal{M}$ denotes the set of finite words over the alphabet $I$. Fix a total order $<$ on $I$; thus $\mathcal{M}$ is totally ordered for the induced lexicographic order $\leq$. 
  For every $\beta \in Q_{+}$, any finite-dimensional $R(\beta)$-module $V$ decomposes as 
$$ V = \bigoplus_{\nu, \wt(\nu)=\beta} e(\nu) \cdot V . $$
The subspace $e(\nu) \cdot V$ can be seen as some kind of weight space by analogy with the representation theory of semisimple finite-dimensional Lie algebras. 
Hence one can consider the \textit{highest word} of $V$, i.e. the biggest $\nu$ (for the total order $\leq$) such that $e(\nu) \cdot V$ is non zero. 
   We set 
$$ \M := \{ \nu \in \mathcal{M} \mid \exists V \in R(\wt(\nu))-mod , \text{$\nu$ is the highest word of $V$}  \}. $$
The following statement is the main result of \cite{KR} and shows that $\M$ is in bijection with the set of isomorphism classes of simple modules in $R-gmod$. 
   
  \begin{thm}[{{\cite[Theorem 7.2]{KR}}}] \label{thmKLR}
    \begin{enumerate}
      \item There exists a finite subset $\mathcal{GL}$ of $\mathcal{M}$ in bijection with $\Phi_{+}$ such that $\M$ is exactly the set 
      $$ \{ {\bf j}_1 \cdots {\bf j}_k \mid {\bf j}_1 , \ldots , {\bf j}_k \in \mathcal{GL} , {\bf j}_1 \geq \cdots \geq {\bf j}_k \} . $$
      \item For every $\mu \in \M$, there is a unique (up to isomorphism) finite-dimensional simple module $L(\mu)$ of highest word $\mu$. Moreover, write $\mu = {\bf j}_1 \cdots {\bf j}_k$; then $L(\mu)$ is given by
      $$ L(\mu) = hd \left(L({\bf j}_1) \circ \cdots \circ L({\bf j}_k) \right) . $$
      \item For $\mu$ of the form ${\bf j}^n$ with ${\bf j} \in \mathcal{GL}$, one has $L(\mu) = L({\bf j})^{\circ n}$.
    \end{enumerate}
 \end{thm}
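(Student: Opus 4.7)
The plan is to follow Kleshchev-Ram's strategy, combining Lalonde-Ram's combinatorics of Lyndon words on the ordered alphabet $(I,<)$ with the Mackey-type structure on convolution products of $R(\beta)$-modules. First I would recall two ingredients from the theory of Lyndon words: every word $\nu \in \mathcal{M}$ admits a unique \emph{Lyndon factorization} $\nu = \ell_1 \cdots \ell_k$ with $\ell_1 \geq \cdots \geq \ell_k$ all Lyndon, and Leclerc's result that in the shuffle algebra on $\mathcal{M}$, the shuffle product of Lyndon words $\ell_1 > \cdots > \ell_k$ has leading term $\ell_1 \cdots \ell_k$ with coefficient $1$.

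The crucial step is then the compatibility lemma: if $L$ and $L'$ are simple with highest words $\mu$ and $\mu'$ respectively and $\mu \geq \mu'$, then the simple head of $L \circ L'$ has highest word $\mu\mu'$, with $\dim e(\mu\mu')(L \circ L') = 1$. The proof uses the Mackey-type decomposition of $e(\nu)(L \circ L')$ as a sum, over the shuffles of pairs $(\nu_1,\nu_2)$ with $e(\nu_1)L \neq 0$ and $e(\nu_2)L' \neq 0$ producing $\nu$, of tensor product layers $e(\nu_1) L \otimes e(\nu_2) L'$. Applying Leclerc's shuffle lemma to the Lyndon factorizations of $\mu$ and $\mu'$ identifies the lex-maximal contributing shuffle as $\mu\mu'$, yielding the statement. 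This combinatorial-homological step is the main obstacle of the proof: everything else is bookkeeping built on top of it.

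Granted this, call a word \emph{good} if it is the highest word of some simple, and a \emph{good Lyndon word} if moreover it is Lyndon; let $\mathcal{GL}$ denote the set of good Lyndon words. An induction on length, using the compatibility lemma together with the head lemma of \cite{KKKO} for real simples, shows that every good word is uniquely a non-increasing product of good Lyndon words, and conversely each such product $\ell_1 \cdots \ell_k$ is the highest word of the simple head of $L(\ell_1) \circ \cdots \circ L(\ell_k)$. For the bijection $\mathcal{GL} \longleftrightarrow \Phi_{+}$, I would compare in each weight $\beta \in Q_{+}$ the number of good words of weight $\beta$ with the Kostant partition number $\kappa(\beta) = \dim U_q(\mathfrak{n})_{-\beta}$, using that $R-gmod$ categorifies $\Aqn$. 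Matching generating series over all $\beta$ forces a weight-preserving bijection between $\mathcal{GL}$ and $\Phi_{+}$.

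Parts (2) and (3) are then routine: uniqueness of $L(\mu)$ with highest word $\mu$ follows from the one-dimensionality of $e(\mu) L(\mu)$ together with the maximality of $\mu$, while the head description is exactly the construction from the previous paragraph. For $\mu = {\bf j}^n$ with ${\bf j} \in \mathcal{GL}$, iterating the key lemma shows that $L({\bf j})^{\circ n}$ has highest word ${\bf j}^n$; simplicity of this convolution follows because $L({\bf j})$ is real, so that $L({\bf j}) \circ L({\bf j})$ admits a scalar $R$-matrix endomorphism, and a straightforward induction then gives $L({\bf j})^{\circ n} = L({\bf j}^n)$.
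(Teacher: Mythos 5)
The paper itself does not prove this statement: it is quoted from Kleshchev--Ram \cite[Theorem 7.2]{KR}, so your proposal has to be judged as a reconstruction of their argument. The overall architecture (shuffle characters via the Mackey filtration, Lyndon factorizations, cuspidal modules, a count against the Kostant partition function) is the right one, but the step you yourself call crucial is false as stated. Take type $A_2$ with $1<2$, so $\mathcal{GL}=\{(1),(12),(2)\}$, and take $\mu=(21)\geq\mu'=(12)$. The concatenation $\mu\mu'=(2112)$ is not even a dominant word (its Lyndon factorization is $(2)(112)$ and $(112)$ is not good), the highest word of $L(21)\circ L(12)$ is $(2121)$, and $\dim e(2112)\bigl(L(21)\circ L(12)\bigr)=2$. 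The error is exactly where you invoke the shuffle lemma: for $\mu\geq\mu'$ the lexicographically maximal shuffle of $\mu$ and $\mu'$ is the non-increasing \emph{merge} of their Lyndon factors (this is what Proposition~\ref{monoid} of this paper computes, and even that is proved here only for classical types or the natural order), not the concatenation. The concatenation is maximal only under the stronger hypothesis that the smallest Lyndon factor of $\mu$ is $\geq$ the largest Lyndon factor of $\mu'$; that is the form in which Kleshchev--Ram use it, and it is what your induction actually needs, so the lemma must be restated with that hypothesis. Moreover the claim $\dim e(\mu\mu')(L\circ L')=1$ fails whenever Lyndon factors repeat --- already $e(11)\bigl(L(1)\circ L(1)\bigr)$ is two-dimensional --- which is precisely the situation of part (3); the correct multiplicity is a quantum factorial, and the simplicity of $L({\bf j})^{\circ n}$ has to be argued with that in mind (Kleshchev--Ram do it by a character computation; invoking realness \`a la KKKO is anachronistic and in any case requires first proving the cuspidal module is real).

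Second, the passage to part (1) is not mere bookkeeping. The (corrected) lemma only constructs, for each non-increasing product of good Lyndon words, a simple with that highest word; it yields the inequality that the number of such products of weight $\beta$ is at most the number of simples of $R(\beta)$, which equals the Kostant partition number. Your counting step therefore does not ``force'' a bijection $\mathcal{GL}\leftrightarrow\Phi_{+}$: the count is consistent with $\mathcal{GL}$ being too small. What is missing is the converse direction, namely that the highest word of an \emph{arbitrary} simple has all its Lyndon factors good (equivalently, that every $\beta\in\Phi_{+}$ is the weight of a cuspidal simple with Lyndon highest word). In \cite{KR} this is where the real work lies: it is obtained by identifying highest words of simples with leading words of dual canonical basis elements under the character embedding into the quantum shuffle algebra and invoking Leclerc's and Lalonde--Ram's description of good Lyndon words, or alternatively by the restriction-functor cuspidal-system argument later systematized by McNamara. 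With the key lemma corrected and this converse supplied, the remaining steps of your outline (uniqueness of $L(\mu)$, the head description, and part (3)) do go through along the lines you indicate.
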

The elements of $\mathcal{GL}$ are called \textit{good Lyndon words} (or dominant Lyndon words). Elements of $\M$ are called \textit{dominant words}. The simple modules corresponding to good Lyndon words are called \textit{cuspidal representations} (see also \cite{McN}). 
 For a given total order $<$ on $I$, the good Lyndon words can be constructed explicitly using the algorithm described in {{\cite[Section 4.3]{Leclerc}}}. 
 
  \begin{ex}
 Assume $\mathfrak{g}$ if of type $A_2$ and choose the order $1<2$. Then $\mathcal{GL} = \{ 1,12,2 \}$ ordered with respect to the lexicographic order in an obvious way, and the corresponding order on $\Phi_{+}$ is given by $\alpha_1 < \alpha_1 + \alpha_2 < \alpha_2$. 
   \end{ex}

 \begin{rk} \label{canofacto}
 For a dominant word $\mu \in \M$, the writing $\mu = {\bf i}_1 \cdots {\bf i}_k$ (with ${\bf i}_1 , \ldots , {\bf i}_k$ good Lyndon words ranged in the decreasing order) is known to be essentially unique (see \cite{KR} for a precise statement). It is called the \textit{canonical factorization} of $\mu$. 
 \end{rk}
  
\subsection{Monoidal categorification of quantum coordinate rings}
 \label{remindKK}

In their series of papers  \cite{KKK,KKKO3,KKKO}, Kang-Kashiwara-Kim-Oh constructed braiding operators for $R-gmod$ and showed that this category is a monoidal categorification (in the sense of \cite{HL}) of the cluster structure on $\Aqn$. They also proved similar statements for various subcategories of $R-gmod$, denoted $\Cw$. This section is devoted to fixing notations and recalling the main properties of these categories.
 Our exposition mainly follows {{\cite[Section 2.3]{KK}}}.

Assume $\mathfrak{g}$ is simply-laced. 
Let $W$ denote the Weyl group of $\mathfrak{g}$. For any $w \in W$, we denote by $N := l(w)$ the length of $w$. The quantum coordinate ring $\Aqnw$ is a subalgebra of $\Aqn$ defined in \cite{GLS}. It is shown to have a (quantum) cluster algebra structure. Kang-Kashiwara-Kim-Oh \cite{KKKO} showed that $\Aqnw$ admits a monoidal categorification by a subcategory $\Cw$ of $R-gmod$ ({{\cite[Theorem 11.2.3]{KKKO}}}). The category $\Cw$ is stable under taking subquotients, extensions, and monoidal products.  Following \cite{KK}, we set
$$ \Phi_{+}^w := \Phi_{+} \cap w \Phi_{-} $$
where $\Phi_{+}$ (resp. $\Phi_{-}$) stands for the set of positive (resp. negative) roots of $\mathfrak{g}$.  The set $\Phi_{+}^w$ has cardinality $N$ and we write $\Phi_{+}^w = \{\beta_k , 1 \leq k \leq N \}$. 

\begin{rk} 
The set of positive roots $\Phi_{+}^w$ does not depend on the choice of a reduced expression for $w$. Moreover, $\Phi_{+}^w \neq \Phi_{+}^{w'}$ if $w \neq w'$. 
\end{rk}

There is a natural bijection between (total) convex orderings on $\Phi_{+}^{w}$ and reduced expressions of $w$. More precisely, if ${\bf w} := (i_1, \ldots , i_N)$ is a reduced expression of $w$, then there is a natural corresponding convex order $\leq$ on $\Phi_{+}^w$ given by
$$ \beta_{1} < \cdots < \beta_{N} $$
where $\beta_k := s_{i_1} \cdots s_{i_{k-1}}(\alpha_{i_{k}})$ for every $1 \leq k \leq N$. We refer to \cite{Leclerc} and references therein for more details. 
Here we will be interested in a particular subfamily of convex orderings on $\Phi_{+}^{w}$ called Lyndon orderings. They are defined as the restrictions to $\Phi_{+}^{w}$ of orderings on $\Phi_{+}$ arising from the bijection of Theorem~\ref{thmKLR}(1), for each choice of a total order $<$ on $I$ (see also Section~\ref{sectionmainresult} below). 
Let ${\bf i}_{1} < \cdots < {\bf i}_{N}$ denote the good Lyndon words corresponding respectively to $\beta_{1}, \ldots , \beta_{N}$ via this bijection. It is known (see {{\cite[Section 2.3]{KK}}}) that the simple objects in $\Cw$ are exactly the $L(\mu)$ for $\mu$  dominant word of the form $({\bf i}_{N})^{c_{N}} \cdots ({\bf i}_{1})^{c_{1}} , c_{1}, \ldots , c_{N} \in \mathbb{N}$. 
 
\smallskip

Geiss-Leclerc-Schr\"oer \cite{GLS} constructed an initial seed $\mathcal{S}^{\bf w}$ in $\Aqnw$ corresponding to the chosen reduced expression ${\bf w}$ of $w$.  The index set $J$ of the cluster variables of this seed is $J=\{1, \ldots , N \}$; it splits into the disjoint union $J = J_{ex} \sqcup J_{fr}$ where $J_{ex}$ (resp. $J_{fr}$) denotes the index set of the unfrozen variables (resp. the frozen variables). One has 
$$ J_{fr} = \{k \in J , k_{+} = N+1 \} \qquad \text{and} \qquad J_{ex} = J \setminus J_{fr} $$
where $k_{+} := \min \left( \{k | k<s \leq N , i_k=i_s \} \sqcup \{N+1\} \right)$. 

\bigskip

	\subsection{Dominance order and generalized parameters}
	 \label{remindCasbi}
	 
	 We consider a Lie algebra $\mathfrak{g}$ of type $A_n, D_n$ or $E_n$ and we fix and index set  $I$ of simple roots. Recall that the category $R-gmod$ associated to $\mathfrak{g}$ is a monoidal category, whose product is denoted $\circ$. We let $<$ denote an arbitrary ordering on $I$. The set $\M$ of dominant words is totally ordered for the induced lexicographic order (still denoted $<$). It  is naturally endowed with a structure of abelian monoid, whose law is denoted $\odot$. By definition, for any $\mu , \nu \in \M$, $\mu \odot \nu$  is defined as the greatest of the dominant words corresponding to the Jordan-H\"older components of the product  $L(\mu) \circ L(\nu)$. The monoid $(\M, \odot)$ is commutative (as $K_0(\C)$ categorifies a cluster algebra, which is commutative); hence it is naturally embedded into its Grothendieck group $\G$ whose law is again denoted by $\odot$. This group is abelian and inherits a total lexicographic order that extends the one on $\M$ (see {{\cite[Definition 4.4, Proposition 4.5]{Casbi}}}). Let $r$ be the number of positive roots and let us write ${\bf j}_1 < \cdots < {\bf j}_r$ the elements of $\mathcal{GL}$ ordered with respect to $<$. Consider the following map:
	 $$ \begin{array}{cccc}
    \varphi : & \M & \longrightarrow & \mathbb{N}^{r} \\
     {} &  \mu & \longmapsto &  ~^t(c_1, \ldots , c_r)
   \end{array} 
   $$
   if $({\bf j}_r)^{c_r} \cdots ({\bf j}_{1})^{c_{1}}$ is the canonical factorization of $\mu$ (the integers $c_i$ being possibly zero). The following was proved for a type $A_n$ underlying Lie algebra $\mathfrak{g}$ in \cite{Casbi}:
   
   \begin{prop} \label{monoid}
  The map $\varphi$ is a monoid isomorphism from $(\M, \odot)$ to $(\mathbb{N}^{r},+)$. 
   \end{prop}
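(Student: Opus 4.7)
The plan is to establish bijectivity and the monoid-homomorphism property separately. Bijectivity of $\varphi$ is an immediate consequence of Theorem~\ref{thmKLR}(1): every dominant word admits a unique canonical factorization as a non-increasing concatenation of good Lyndon words, so the exponent tuple $(c_1, \ldots, c_r)$ uniquely determines $\mu \in \M$, and every tuple in $\mathbb{N}^r$ arises in this way.

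For the monoid-homomorphism property, I would establish two base computations describing $\odot$ on generators. First, for a single good Lyndon word ${\bf j}_k$ and any $c \in \mathbb{N}$, Theorem~\ref{thmKLR}(3) gives $L({\bf j}_k)^{\circ c} = L(({\bf j}_k)^c)$, so the convolution product is already simple, and ${\bf j}_k^{\odot c} = ({\bf j}_k)^c$. Second, for two distinct good Lyndon words with ${\bf j}_k > {\bf j}_{\ell}$, the concatenation ${\bf j}_k {\bf j}_{\ell}$ is itself in canonical form, and Theorem~\ref{thmKLR}(2) yields $L({\bf j}_k {\bf j}_{\ell}) = \mathrm{hd}(L({\bf j}_k) \circ L({\bf j}_{\ell}))$, whence ${\bf j}_k \odot {\bf j}_{\ell} = {\bf j}_k {\bf j}_{\ell}$. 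Combining these using the commutativity of $\odot$ on $\M$ (coming from the fact that $K_0(\Cw)$ categorifies a commutative ring), induction on the total exponent yields
$$ {\bf j}_r^{\odot c_r} \odot \cdots \odot {\bf j}_1^{\odot c_1} = ({\bf j}_r)^{c_r} \cdots ({\bf j}_1)^{c_1} $$
for any $(c_1, \ldots, c_r) \in \mathbb{N}^r$, which is exactly the dominant word with exponent tuple $(c_1, \ldots, c_r)$. This identifies $\varphi^{-1}$ as a monoid morphism $(\mathbb{N}^r,+) \to (\M, \odot)$, and hence so is $\varphi$.

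The main obstacle is to verify that the identifications above are compatible with the definition of $\odot$ in terms of the lexicographically greatest dominant word appearing in the Jordan--H\"older decomposition of $L(\mu) \circ L(\nu)$. When $L(\mu) \circ L(\nu)$ has a simple head (as occurs systematically for the cuspidal products above, thanks to Theorem~\ref{thmKLR}(2)), this head is automatically the maximal Jordan--H\"older component, so the identifications go through. Reordering factors, which is used when applying commutativity of $\odot$, affects at most quantum degree shifts and does not alter the underlying dominant-word indexing, so the inductive computation is unambiguous.
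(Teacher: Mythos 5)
Your reduction to generators hides the entire difficulty of the statement in the phrase ``this head is automatically the maximal Jordan--H\"older component.'' That is not a general fact: for a module with simple head, a composition factor sitting in the radical can perfectly well have a strictly larger highest word (think of a non-split self-extension-type situation $0 \to L(\mu') \to E \to L(\nu') \to 0$ with $\mu' > \nu'$; the head is $L(\nu')$ but the greatest dominant word among the factors is $\mu'$). Since $\odot$ is defined via the \emph{greatest} dominant word among \emph{all} Jordan--H\"older components of $L(\mu)\circ L(\nu)$, you must rule out precisely this phenomenon for the products you use, and Theorem~\ref{thmKLR} as quoted gives you no help: parts (1)--(3) say that the head of the cuspidal product is $L$ of the concatenation, but say nothing about the other composition factors having smaller highest words. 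Showing that the greatest word occurring in $L({\bf j}_k)\circ L({\bf j}_\ell)$ (equivalently, the greatest shuffle) is the sorted concatenation is exactly the content of Equation~\eqref{candidate}, which is what the paper's proof establishes by a hands-on analysis of maximal shuffles of the canonical factorizations (Lemmas~\ref{lem2}--\ref{lem5} and Corollaries~\ref{cor1},~\ref{cor2}). Note also that this analysis is what forces the restriction in Corollary~\ref{cor2} (classical type with an arbitrary order, or arbitrary finite type with the natural order, via the ``at most two occurrences of the first letter'' property of good Lyndon words); a proof that silently asserts the maximality in full generality is claiming more than the paper itself can prove, unless you explicitly invoke the stronger triangularity statement of Kleshchev--Ram (all composition factors of a standard module have highest word at most the concatenation), which is not among the results recalled here.

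There is a second, smaller gap in the bookkeeping: your base cases only treat cuspidal~$\odot$~cuspidal and powers of a single cuspidal, but the induction on the total exponent inevitably requires evaluating $\lambda' \odot {\bf j}$ where $\lambda'$ is an arbitrary dominant word (the partial product already computed) and ${\bf j}$ a good Lyndon word; associativity and commutativity of $\odot$ alone do not reduce this to the base cases. One can handle it by observing that $L(\lambda')\circ L({\bf j})$ is a quotient of the standard module $\Delta(\lambda'{\bf j})$, so its head is $L(\lambda'{\bf j})$ -- but the maximality of that factor again needs the same missing triangularity, this time for standard modules of arbitrary length, so the gap of the first paragraph reappears rather than being avoided. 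In short, the bijectivity part and the overall reduction strategy are fine, but the key step is assumed rather than proved, and it is exactly the step the paper spends its Section~3.1 on.
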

	 
	 We delay the proof to the next section.
   
	\smallskip
	
	 In {{\cite[Section 4.2]{Casbi}}}, we constructed a map 
   $$\tilde{\Psi} : K_0(R-gmod) \longrightarrow \M$$
    sending the class of any simple module in $R-gmod$ onto the corresponding dominant word in $\M$. This map satisfies 
$$ \tilde{\Psi}([L(\mu)][L(\nu)]) = \mu \odot \nu $$
for any $\mu, \nu \in \M$. In other words one has
  \begin{equation} \label{additivity}
   \tilde{\Psi}([M][N]) = \tilde{\Psi}([M]) \odot \tilde{\Psi}([N])
    \end{equation}
   for any simple objects $M,N$ in $R-gmod$. 
	 
	\bigskip
	
	 Now let $w \in W$ and $N := l(w)$.  Fix a reduced expression ${\bf w}$ of $w$. This is equivalent to the choice of order on $\Phi_{+}^{w}$ as recalled in Section~\ref{remindKK}. As in the previous section we write $\Phi_{+}^{w}=\{ \beta_1 < \cdots \beta_N \}$. By Theorem~\ref{thmKLR} there is a unique word ${\bf i}_k$ of $\mathcal{GL}$ with weight $\beta_k$ for every $1 \leq k \leq N$. Similarly the simple objects in $\Cw$ are in bijection with the set 
  $$\M_w := \{ ({\bf i}_N)^{c_N} \cdots ({\bf i})_1^{c_1} , c_1 , \ldots , c_N \geq 0 \} .$$
As before we let $\G_w$ denote the Grothendieck group of $\M_w$. 
   The rings $K_0(R-gmod)$ (resp. $K_0(\Cw)$) are domains and hence are embedded into their fraction fields. The map $\tilde{\Psi}$ can be extended to these fields by setting:
   $$ \tilde{\Psi} \left( \frac{x}{y} \right) := \tilde{\Psi}(x) \odot \tilde{\Psi}(y)^{\odot -1} . $$
 This map provides a way to study the cluster structure of $\Aqnw$ at the level of the monoid $\M_w$ (or the group $\G_w$). 
Let $\s$ be a seed in $\Aqnw$. Let $x_1, \ldots , x_N$ denote the cluster variables and $B=(b_{ij})_{i,j}$ the exchange matrix of $\s$. Following \cite{FZ4} we set
$$ \yjh := \prod_{ 1 \leq i \leq N} x_i^{b_{ij}} $$
and we define (see {{\cite[Definition 4.7]{Casbi}}}):
$$ \mjh := \tilde{\Psi} (\yjh) =  \bigodot_{1 \leq i \leq N} \mu_i^{\odot b_{ij} } \in G_w . $$
These elements are of particular interest from the perspective of monoidal categorification of cluster algebras as they can be used to define remarkable partial orderings as follows:

 \begin{deftn}[Dominance order , \cite{Qin} ] \label{defdom}
  Let $\s = ((x_1, \ldots , x_N) , B)$ be a seed in $\A$ and consider the elements $\yjh$ defined above. Then, given $\mathfrak{m}$ and $\mathfrak{m}'$ two Laurent monomials in the $x_i$, we write 
   $$ \mathfrak{m}  \preccurlyeq  \mathfrak{m}' \Leftrightarrow  \exists  \gamma_1, \ldots , \gamma_n \geq 0 ,  \mathfrak{m}' = \mathfrak{m} \times \prod_j {\yjh}^{\gamma_j}. $$
   \end{deftn}
   
   \begin{rk}
Note that we take opposite conventions compared with \cite{Qin}. 
 \end{rk}   
   
 In the framework of monoidal categorification of quantum coordinate rings using quiver Hecke algebras, Kashiwara-Kim \cite{KK} related this cluster-theoretic partial ordering on monomials to homogeneous degrees of renormalized $R$-matrices constructed in \cite{KKK}. 
In \cite{Casbi}, we used the $\mjh$ as natural analogs of $\yjh$ in terms of parameters for simple modules of quiver Hecke algebras. We introduced the notion of \textit{compatible seed} (see {{\cite[Definition 4.7]{Casbi}}}). 
The following statement is the main result of \cite{Casbi}.

 \begin{thm}[{{\cite[Theorem 6.2]{Casbi}}}]
 Take $\mathfrak{g} = \mathfrak{sl}_{n+1}$ and let $w=w_0$ be the longest element of the corresponding Weyl group. Consider the reduced expression 
 $$ {\bf w_0} := (1,2,1,3,2,1,\ldots , n,n-1, \ldots,1) . $$
 Then the seed $\mathcal{S}^{\bf w_0}$ is a compatible seed in $R-gmod$. 
 \end{thm}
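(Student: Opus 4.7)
The plan is to verify the definition of compatible seed from \cite[Definition 4.7]{Casbi} directly by computing the generalized parameters $\mjh$ associated to $\mathcal{S}^{\bf w_0}$. The computation splits into three steps: identify the dominant words of the initial cluster variables, write down the exchange matrix, and then use the monoid law $\odot$ to evaluate and sign-check each $\mjh$.

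First, I would describe the cluster variables of $\mathcal{S}^{\bf w_0}$ in terms of their dominant words. For the given reduced expression $(1,2,1,3,2,1,\ldots,n,n-1,\ldots,1)$ the determinantial modules of \cite{GLS,KKKO} admit an explicit description: $x_k=[L(\mu_k)]$ where the canonical factorization of $\mu_k$ reads $\mu_k=({\bf i}_N)^{c_N}\cdots({\bf i}_1)^{c_1}$ with $c_j=1$ if $j\leq k$ and $i_j=i_k$, and $c_j=0$ otherwise. This is the type $A_n$ special case of Theorem~\ref{initpara} with the natural order on $I$; it can be obtained by direct induction on the length of the initial part of the reduced expression and is in fact the combinatorial heart of \cite{Casbi}.

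Second, I would write the exchange matrix $B=(b_{ij})$ of $\mathcal{S}^{\bf w_0}$ following \cite{GLS}. With this staircase reduced expression the matrix has a very concise combinatorial shape: all its nonzero entries are $\pm 1$ and they connect each unfrozen index $j$ only to a small number of positions $i$ with $i_i=i_j$ or $|i_i-i_j|=1$.

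Third, inserting these data into $\mjh=\bigodot_{i}\mu_i^{\odot b_{ij}}\in \G_{w_0}$ and using the triangular shape of the $\mu_i$ forces a local telescoping: the contributions coming from the two neighbours of $j$ having the same colour $i_j$ wipe out most of the column of $\mu_j$, leaving a short alternating expression whose positive and negative parts are immediately comparable for the lexicographic order on $\M_{w_0}$. From this one reads off that $\mjh$ lies in the cone dictated by \cite[Definition 4.7]{Casbi}, which is precisely the required compatibility. The main obstacle is this last combinatorial verification: the alternating signs coming from $B$ interact nontrivially with $\odot$, a law which is itself only implicitly defined through Jordan--H\"older filtrations of products of simples; the desired cancellations rely crucially on the matching between the GLS quiver for $\mathcal{S}^{\bf w_0}$ and the natural order $1<2<\cdots<n$ on $I$, and would break down for a different reduced expression or a different ordering.
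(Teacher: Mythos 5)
Your outline --- verify \cite[Definition 4.7]{Casbi} directly by (i) identifying the dominant words of the initial cluster variables, (ii) writing down the GLS exchange matrix, and (iii) evaluating the $\mjh$ under $\odot$ and sign-checking --- is viable and is essentially the route of the original proof in \cite{Casbi}, but it is not how the present paper handles the statement: here the theorem is recalled and then subsumed in Corollary~\ref{corcompat}, whose proof combines Theorem~\ref{initpara} (your step (i) is its type $A_n$ case, \cite[Theorem 6.1]{Casbi}; in the paper it is deduced from \cite[Proposition 3.14]{KK} rather than by induction on the reduced word) with Corollary~\ref{ordrepara}. The real divergence is in your step (iii), which is both underspecified and heavier than necessary. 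The paper never analyses cancellations: since the canonical factorization of $\mu_k$ has leading good Lyndon word ${\bf i}_k$ and ${\bf i}_1 < \cdots < {\bf i}_N$, Corollary~\ref{ordrepara} gives $\mu_k > \bigodot_{j<k}\mu_j^{\odot c_j}$ for \emph{arbitrary} integer exponents $c_j$; writing $\mjh$ (for the frozen directions, which suffice by \cite[Remark 4.8]{Casbi}) as $\mu_{j_+}$ times a Laurent $\odot$-monomial in the $\mu_k$ with $k<j_+$, the inequality $\mjh\odot\mu>\mu$ for all $\mu\in\M_w$ follows from this single leading-term comparison, with no telescoping and no dependence on the fine shape of the quiver. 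This is precisely why the paper obtains compatibility for every finite type, every order on $I$ and every $w$, whereas you expect your cancellation pattern to break down outside the staircase expression.

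Two points in your sketch need to be made explicit for it to close. First, to compute anything with $\odot$ (telescoping or otherwise) you must know that $\mu\odot\nu$ is obtained by merging canonical factorizations, i.e.\ that $\varphi$ is a monoid isomorphism (Proposition~\ref{monoid}; \cite[Proposition 5.1]{Casbi} in type $A_n$): you flag $\odot$ as ``only implicitly defined'' but never supply the ingredient that turns step (iii) into combinatorics. Second, you should specify which directions $j$ the compatibility condition actually requires you to check; following \cite[Remark 4.8]{Casbi}, the verification reduces to the frozen $\hat{y_j}$, and your description of the computation (neighbours of the same colour, alternating signs) suggests you are preparing to check more than is needed.
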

  

 \section{Seeds associated with orderings on Lyndon words}
 \label{seedsection}

In this section we generalize several results obtained in \cite{Casbi}. Let $\mathfrak{g}$ be a  semisimple Lie algebra of finite type, $I$ a fixed index set of simple roots and $<$ a total ordering on $I$. We begin by proving Proposition~\ref{monoid} in the following cases: $\mathfrak{g}$ of arbitrary finite type  with the choice of the natural ordering of $I$ (see {{\cite[Section 8]{KR}}) and $\mathfrak{g}$ of classical type with any ordering. This generalizes {{\cite[Proposition 5.1]{Casbi}}}. Then we assume $\mathfrak{g}$ is simply-laced and for any $w$ in the Weyl group $W$ of $\mathfrak{g}$, we consider a reduced expression ${\bf w}_{<}$ of $w$ uniquely determined by $<$, together with the corresponding seed $\mathcal{S}^{{\bf w}_{<}}$  in $\Aqnw$ following \cite{GLS,KKKO}. We provide an explicit description in terms of dominant words of the simple modules corresponding to the cluster variables of $\mathcal{S}^{{\bf w}_{<}}$ (see Theorem~\ref{initpara}). This holds for arbitrary $<$ and $\mathfrak{g}$ of finite type. It generalizes {{\cite[Theorem 6.1]{Casbi}}} to any subcategory $\Cw$ (not only $R-gmod$) and any finite-type underlying Lie algebra $\mathfrak{g}$. The key tool for the proof is provided by {{\cite[Proposition 3.14]{KK}}}. We state several consequences, and in particular we prove {{\cite[Conjecture 4.10]{Casbi}}} in the cases where Proposition~\ref{monoid} hods.

 \subsection{The monoid structure on dominant words }

 This subsection is devoted to the proof of Proposition~\ref{monoid} for $\mathfrak{g}$ of arbitrary finite type. Let us outline the fact that for $\mathfrak{g}$ of exceptional type, we need to fix the natural order on $I$, as in {{\cite[Section 8.2]{KR}} (see Corollary~\ref{cor2} below). The  cuspidal representations in $R-gmod$ (parametrized by elements of $\mathcal{GL}$) are described in {{\cite[Sections 8.8-8.10]{KR}}. For arbitrary orderings, our proof works for $\mathfrak{g}$ of any classical type.

We let $<$ denote an arbitrary total ordering on $I$.
Recall that a Lyndon word is by definition  a word which is strictly smaller than any of its proper right factors. 

Let $\mu, \nu$ be two words of respective lengths $m,n$. We denote by $\mu \nu$ the concatenation of $\mu$ and $\nu$. 
By \textit{shuffle} of $\mu$ and $\nu$ we mean a word obtained by applying a permutation $\sigma$ to the letters of $\mu \nu$ such that the restrictions of $\sigma$ to $\{1,\ldots , m \}$ and $\{m+1,\ldots , m+n \}$ are increasing (see {{\cite[Section 4.2]{KR}}}). We denote this word by $\sigma \cdot (\mu \nu)$.

 We consider two dominant words $\mu , \nu \in \M$ that we write 
 $$ \mu = {\bf i}_1 \cdots {\bf i}_r \quad , \quad \nu = {\bf j}_1 \cdots {\bf j}_s $$
 where ${\bf i}_1 , \ldots ,  {\bf i}_r , {\bf j}_1 , \ldots , {\bf j}_s \in \mathcal{GL}$ with ${\bf i}_1 \geq \cdots \geq {\bf i}_r$ and ${\bf j}_1 \geq \cdots \geq {\bf j}_s$.  We show that 
\begin{equation} \label{candidate}
\mu \odot \nu =  {\bf l}_1 \cdots {\bf l}_{r+s} 
\end{equation}
where ${\bf l}_1 \geq \cdots \geq {\bf l}_{r+s}$ are the elements of the set $\{ {\bf i}_1 , \ldots , {\bf i}_r , {\bf j}_1 , \ldots , {\bf j}_s \}$ ranged in the decreasing order. 
Equivalently, we show that the right hand side of Equation~\eqref{candidate} is the biggest shuffle of $\mu$ and $\nu$ (for the lexicographic order). As it is obviously a shuffle of $\mu$ and $\nu$, it only remains to show that it is bigger than  any other shuffle of $\mu$ and $\nu$.

We use an induction on $r+s$, i.e. we assume that ${\bf i}_2 \cdots {\bf i}_r \odot {\bf j}_1 \cdots {\bf j}_s$ is the concatenation of ${\bf i}_1 , \ldots , {\bf i}_r , {\bf j}_2 ,  \ldots , {\bf j}_s $ and that ${\bf i}_1 \cdots {\bf i}_r \odot {\bf j}_2 \cdots {\bf j}_s$ is the concatenation of  $ {\bf i}_2 , \ldots , {\bf i}_r , {\bf j}_1 , \ldots , {\bf j}_s$ ranged in the decreasing order. 

  Let $\sigma$ be a shuffle permutation of $\mu$ and $\nu$. We show that $\sigma \cdot (\mu  \nu) \leq {\bf l}_1 \cdots {\bf l}_{r+s}$. We assume ${\bf i}_1 \geq {\bf j}_1$ the other case being analogous. Note that this is equivalent to ${\bf l}_1 = {\bf i}_1$. 
 Let us write 
 $$ {\bf i}_1 = (a_1, \ldots , a_p) \quad , \quad {\bf j}_1 = (b_1 , \ldots , b_q). $$

   \begin{lem}  \label{lem2}
   If $a_1 > b_1$ then $\sigma$ shuffles ${\bf j}_1$ to the right of ${\bf i}_1$. Equivalently $\sigma(1)=1 , \ldots , \sigma(p)=p$.  
   \end{lem}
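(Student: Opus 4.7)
The plan is to show that any shuffle $\sigma$ of $\mu$ and $\nu$ which is maximal (for the lex order on $\sigma \cdot (\mu \nu)$) must satisfy $\sigma(k) = k$ for $k = 1, \ldots, p$. Since the enclosing aim is to prove $\sigma \cdot (\mu \nu) \le \mathbf{l}_1 \cdots \mathbf{l}_{r+s}$ for every shuffle $\sigma$, one may restrict attention to maximal shuffles, and the lemma then describes the structure of such shuffles in the case $a_1 > b_1$.

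The key input I would use is the defining property of the good Lyndon word $\mathbf{i}_1 = (a_1, \ldots, a_p)$: by definition, for every $k \geq 2$, the proper suffix $(a_k, \ldots, a_p)$ is lexicographically strictly greater than $(a_1, \ldots, a_p)$. Comparing first letters forces $a_k \ge a_1$ for all $k \in \{1, \ldots, p\}$, and combining this with the hypothesis $a_1 > b_1$ yields the crucial inequality
\[ a_k > b_1 \quad \text{for every } k \in \{1, \ldots, p\} . \]
I would then run a short induction on $k \le p$. Suppose $\sigma(1) = 1, \ldots, \sigma(k-1) = k-1$, so that positions $1, \ldots, k-1$ of the shuffle are already filled by $a_1, \ldots, a_{k-1}$ and, crucially, no letter of $\nu$ has been placed. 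The shuffle rule then leaves only two candidates for position $k$: the next unused $\mu$-letter $a_k$, or the first letter $b_1$ of $\nu$. Because $a_k > b_1$, any shuffle taking $b_1$ at step $k$ is lexicographically strictly smaller than one taking $a_k$, irrespective of how the remaining positions are completed; maximality thus forces $\sigma(k) = k$.

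To convert this into a statement about an arbitrary $\sigma$ (rather than a purely existential one), I would use a swap argument: given any shuffle $\sigma$ with $\sigma(k_0) \neq k_0$ for some smallest $k_0 \le p$, position $k_0$ is then occupied by $b_1$, and interchanging this $b_1$ with the next $\mu$-letter $a_{k_0}$ produces a new shuffle strictly greater in the lex order, contradicting maximality. I do not expect any real obstacle here: the argument is a direct greedy maximisation resting on the Lyndon inequality $a_k \geq a_1$. The only point requiring mild care is to verify at each inductive step that the first available $\nu$-letter is indeed $b_1$, which is immediate from the inductive hypothesis that no letter of $\nu$ has been placed so far.
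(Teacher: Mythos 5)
Your argument is correct and is essentially the paper's own proof: the paper runs the same letter-by-letter greedy induction, using the Lyndon property of ${\bf i}_1$ to get $a_k \geq a_1 > b_1$ and noting that placing $b_1$ before exhausting ${\bf i}_1$ would make $\sigma \cdot (\mu\nu)$ strictly smaller (the paper compares with the right-hand side of Equation~\eqref{candidate}, you compare with a competing shuffle, which amounts to the same standing assumption on $\sigma$). One small caveat: in your final paragraph the literal transposition of $b_1$ with $a_{k_0}$ need not yield a valid shuffle (letters of $\nu$ may sit between them), but this is harmless since you can instead compare with the shuffle that inserts $a_{k_0}$ at position $k_0$ and shifts the intervening $\nu$-letters (or simply with the identity shuffle $\mu\nu$), which agrees with $\sigma\cdot(\mu\nu)$ up to position $k_0-1$ and beats it at position $k_0$.
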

 
\begin{proof}
As the restrictions of $\sigma$ to $\{1,\ldots , m \}$ and $\{m+1,\ldots , m+n \}$ are increasing, the first letter of $\sigma \cdot (\mu \nu)$ is either $a_1$ or $b_1$. As $b_1 < a_1$, $\sigma \cdot (\mu \nu)$ cannot begin with $b_1$ as it would then be strictly smaller than the right hand side of Equation~\eqref{candidate}. Hence the first letter of $\sigma \cdot (\mu \nu)$ is $a_1$.  Then the second letter is either $a_2$ or $b_1$. As ${\bf i}_1$ is Lyndon,  $a_2 \geq a_1$ and thus $a_2 > b_1$. Hence as before the second letter of $\sigma \cdot (\mu \nu)$ has to be $a_2$. We conclude by a straightforward induction that $\sigma \cdot (\mu \nu)$ begins with ${\bf i}_1$ which proves the Lemma. 
\end{proof}
 
From now on we always assume ${\bf i}_1 \geq {\bf j}_1$ and $a_1 = b_1$. 
 
 \begin{lem} \label{lem3}
 Assume there is only one occurrence of $a_1$ in ${\bf i}_1$ i.e. one has $a_k > a_1$ for every $k \geq 2$. Then $\sigma$ shuffles ${\bf j}_1$ to the right of ${\bf i}_1$ if ${\bf i}_1 > {\bf j}_1$ and either to the left either to the right of ${\bf i}_1$ if ${\bf i}_1 = {\bf j}_1$. 
 \end{lem}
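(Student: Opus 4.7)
The plan is to mirror the structure of the proof of Lemma~\ref{lem2}, now exploiting the strict inequality $a_k > a_1$ for all $k \geq 2$ to handle the equality $a_1 = b_1$. Since $a_1 = b_1$, the first letter of $\sigma \cdot (\mu \nu)$ equals $a_1 = b_1$ whether $\sigma$ picks it from ${\bf i}_1$ or from ${\bf j}_1$, so the natural case split is on which word contributes this first letter.

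First, suppose $\sigma$ takes the initial letter from ${\bf i}_1$. I would prove by induction on $k$ that the first $p$ positions of $\sigma \cdot (\mu \nu)$ are precisely $a_1, a_2, \ldots, a_p$: assuming this holds up to some $k < p$, the next available $\mu$-letter is $a_{k+1}$ while the next $\nu$-letter is still $b_1$, and $a_{k+1} > a_1 = b_1$ by the standing hypothesis. Hence any shuffle placing $b_1$ at position $k+1$ can be strictly improved by the swap that moves $a_{k+1}$ to position $k+1$ and shifts the intervening $\nu$-letters one place to the right, contradicting the assumption that $\sigma \cdot (\mu\nu)$ matches the candidate maximum ${\bf l}_1 \cdots {\bf l}_{r+s}$ from Equation~\eqref{candidate}. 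Consequently the letters of ${\bf j}_1$, which are among the first $q$ letters of $\nu$, all occupy positions strictly greater than $p$ --- that is, ${\bf j}_1$ lies entirely to the right of ${\bf i}_1$.

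Next, suppose $\sigma$ takes the initial letter from ${\bf j}_1$. If ${\bf i}_1 = {\bf j}_1$, then the hypothesis transfers verbatim to ${\bf j}_1$ (since $b_k = a_k > a_1 = b_1$ for $k \geq 2$), and the symmetric version of the previous argument places ${\bf j}_1$ entirely in positions $1, \ldots, q$, hence to the left of ${\bf i}_1$. If instead ${\bf i}_1 > {\bf j}_1$ strictly, I would rule out this sub-case by comparing the two resulting shuffles letter-by-letter: the strict inequalities $a_l = b_l > a_1 = b_1$ force $\sigma$ to pick $b_l$ at each position $l = 1, \ldots, k-1$, where $k$ is the first position at which ${\bf i}_1$ and ${\bf j}_1$ differ, so both shuffles agree on this common prefix; at position $k$, the first sub-case produces $a_k$ while the present one produces at most $b_k < a_k$, and in the special case where ${\bf j}_1$ is a proper prefix of ${\bf i}_1$, the decisive comparison is pushed to position $q+1$, where one uses that the first letter of ${\bf j}_2$ is at most $b_1 = a_1 < a_{q+1}$, a consequence of the canonical factorization of $\nu$.

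The main delicate step will be verifying that the exchange operation in the first sub-case genuinely produces a valid shuffle permutation that strictly increases the lexicographic value; a secondary subtlety is the prefix case $q < p$ with ${\bf j}_1$ a prefix of ${\bf i}_1$, where the comparison rests on the ordering of good Lyndon words in the canonical factorization of $\nu$ rather than on a direct letter comparison within ${\bf i}_1$ and ${\bf j}_1$.
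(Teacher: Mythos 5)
Your proposal is correct and follows essentially the same route as the paper: split on whether the first letter of $\sigma\cdot(\mu\nu)$ comes from ${\bf i}_1$ or ${\bf j}_1$, then use $a_k>a_1=b_1$ to force the successive letters and derive a contradiction with maximality (equivalently, with not being strictly smaller than the right-hand side of Equation~\eqref{candidate}) at the first position where ${\bf i}_1$ and ${\bf j}_1$ disagree. Your explicit treatment of the sub-case where ${\bf j}_1$ is a proper prefix of ${\bf i}_1$, using that the first letter of ${\bf j}_2$ is at most $b_1$ by the canonical factorization of $\nu$, covers a point the paper's proof passes over silently, and your improvement-swap is just a reformulation of the paper's direct letter-by-letter comparison with the candidate word.
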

 
 \begin{proof}
  Consider $k$ maximal in $\{1 , \ldots p\}$ such that $a_1=b_1 , \ldots , a_k=b_k$. 
 Assume $k<p$; then $a_{k+1} > b_{k+1}$ and ${\bf i}_1 > {\bf j}_1$.  If $\sigma$ shuffles $b_1$ to the first letter of $\sigma \cdot (\mu \nu)$  then the second letter is either $a_1$ or $b_2$. But $(b_1 , a_1 \cdots) = (a_1 , a_1 \cdots) < (a_1 , a_2 \cdots)$ by assumption hence $\sigma$ shuffles $b_2$ to the second letter of $\sigma \cdot (\mu \nu)$. Similarly we get that $\sigma$ shuffles $b_1, \ldots , b_k$ to the first $k$ letters of $\sigma \cdot (\mu \nu)$. Then the next letter is either $a_1$ or $b_{k+1}$. Both of these letters are strictly smaller than $a_{k+1}$ which contradicts the fact that $\sigma \cdot (\mu \nu)$ is greater than the right hand side of Equation~\eqref{candidate}. Hence we proved that the first letter of $\sigma \cdot (\mu \nu)$ is $a_1$. 
 
 Then the second letter is either $a_2$ or $b_1$ but $b_1 = a_1 < a_2$ by assumption hence it has to be $a_2$. Another induction shows that $\sigma \cdot (\mu \nu)$ begins with ${\bf i}_1$. 
 
  If $k=p$ then ${\bf i}_1 = {\bf j}_1$. The same proof shows that the first letter of $\sigma \cdot (\mu \nu)$ is either $a_1$ and in this case $\sigma \cdot (\mu \nu)$ begins with ${\bf i}_1$, or $b_1$ and in this case $\sigma \cdot (\mu \nu)$ begins with ${\bf j}_1$. 
  
  \end{proof}
  
The two previous lemmas were essentially proved in type $A_n$ in {{\cite[Section 5.1]{Casbi}}}. In order to deal with the remaining types, we consider a slightly more general version of Lemma~\ref{lem3}. Recall that we assume ${\bf i}_1 \geq {\bf j}_1$ and $a_1=b_1$.  

\begin{lem}  \label{lem4}
  Assume there are exactly two occurrences of $a_1$ in ${\bf i}_1$ and exactly one in ${\bf j}_1$.  
  Then ${\bf j}_1$ is shuffled to the right of ${\bf i}_1$. 
  \end{lem}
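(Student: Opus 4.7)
The strategy for this lemma mirrors the proofs of Lemmas~\ref{lem2} and~\ref{lem3}: I would argue position by position which letter of $\mu \nu$ the shuffle $\sigma$ must select, using that $\sigma \cdot (\mu \nu)$ is required to dominate lexicographically the target word from Equation~\eqref{candidate}. Since ${\bf i}_1$ and ${\bf j}_1$ contain different numbers of $a_1$'s, the inequality ${\bf i}_1 > {\bf j}_1$ is strict, so the analysis of the first $k-1$ positions runs exactly as in the proof of Lemma~\ref{lem3}: each target letter $a_t$ for $2 \leq t \leq k-1$ is strictly greater than $a_1$ (by hypothesis, $a_1$ occurs only at positions $1$ and $k$ in ${\bf i}_1$), while the only alternative from $\nu$ at that stage is $b_1 = a_1$. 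This forces $\sigma$ to fill positions $1, \ldots, k-1$ with $\mu_1, \ldots, \mu_{k-1}$.

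At position $k$, the target letter is $a_1$, which can be realized either by $\mu_k$ or by $\nu_1 = b_1$. If $\sigma$ selects $\mu_k$, then the same inductive argument on positions $k+1, \ldots, p$ (each target letter $a_{k+j}$ is strictly greater than $a_1$, while the only remaining competitor from $\nu$ is $b_1 = a_1$) forces $\sigma$ to select $\mu_{k+1}, \ldots, \mu_p$, so that ${\bf i}_1$ occupies positions $1, \ldots, p$ and ${\bf j}_1$ is pushed entirely to the right, which is the desired conclusion.

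The heart of the proof, and the main obstacle, is to rule out the alternative sub-case where $\sigma$ selects $\nu_1$ at position $k$. In this situation, the only remaining unused letter from $\mu$ is $\mu_k = a_1$, which is strictly smaller than $a_{k+j}$ for every $j \geq 1$; hence positions $k+1, k+2, \ldots, p$ must all be filled from $\nu$. This first requires $\nu$ to have at least $p - k + 1$ letters (if $q < p-k+1$, then $\nu$ runs out before position $p$ and the shuffle strictly falls below the target, violating maximality), and then forces the inequalities $b_{j+1} \geq a_{k+j}$ for $j = 1, \ldots, p-k$. Combining these with the opposite inequalities $b_{j+1} \leq a_{j+1}$ (coming from ${\bf i}_1 \geq {\bf j}_1$ and the shared first letter $a_1$) and $a_{k+j} \geq a_{j+1}$ (coming from the Lyndon property of ${\bf i}_1$ applied to its proper suffix $(a_1, a_{k+1}, \ldots, a_p)$, which is strictly greater than ${\bf i}_1$), I would deduce inductively the chain of equalities $a_{k+j} = a_{j+1}$ for $j = 1, \ldots, p-k$. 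This identifies the proper suffix $(\mu_k, \mu_{k+1}, \ldots, \mu_p)$ of ${\bf i}_1$ with its proper prefix $(\mu_1, \mu_2, \ldots, \mu_{p-k+1})$ of the same length. Since any proper prefix is lexicographically strictly smaller than the whole word, this contradicts the Lyndon property of ${\bf i}_1$, thereby excluding this final sub-case and concluding the proof.
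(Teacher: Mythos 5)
There is a genuine gap at the very first step. You assert that positions $1,\ldots,k-1$ of the shuffle must be filled by the letters $a_1,\ldots,a_{k-1}$ of ${\bf i}_1$ because "the only alternative from $\nu$ at that stage is $b_1=a_1$", claiming the analysis "runs exactly as in the proof of Lemma~\ref{lem3}". But that alternative is only unavailable if no letter of $\nu$ has been consumed yet, and nothing forces position $1$ to come from $\mu$: since $b_1=a_1$, the shuffle may begin with $b_1$, and then (as in the proof of Lemma~\ref{lem3}) it must continue with $b_2=a_2, b_3=a_3,\ldots$ as long as ${\bf j}_1$ matches the prefix of ${\bf i}_1$. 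In Lemma~\ref{lem3} this branch dies because every later letter of ${\bf i}_1$ exceeds $a_1$, so when the matching stops both available letters fall strictly below the target; here that argument breaks down precisely when ${\bf j}_1=a_1a_2\cdots a_{k-1}$ (the case ${\bf m}={\bf k}$ in the paper's notation): the matching of ${\bf j}_1$ ends exactly at position $k$, where the target letter of~\eqref{candidate} is again $a_1$ and can be matched by the first letter of ${\bf i}_1$ (or of ${\bf j}_2$), so ${\bf j}_1$ could a priori be shuffled to the \emph{left} of ${\bf i}_1$, contradicting the conclusion of the lemma. Excluding this configuration is exactly the delicate point of the paper's proof, which handles it by invoking ${\bf j}_2$ and the inequality ${\bf k}<{\bf l}$; your proposal never addresses it, and the strictness of ${\bf i}_1>{\bf j}_1$ that you invoke is not what makes Lemma~\ref{lem3}'s argument work (the single occurrence of $a_1$ is).

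The remainder of your argument is essentially sound and is in fact a nice alternative to the paper's treatment of the other branch: where the paper rules out inserting ${\bf j}_1$ between $a{\bf k}$ and $a{\bf l}$ by applying Lemma~\ref{lem3} to $a{\bf l}$ and ${\bf j}_1$, you rule it out by the letterwise induction ending in the suffix-equals-prefix contradiction with the Lyndon property, which is correct. Two small caveats there: the inequalities $b_{j+1}\leq a_{j+1}$ and $a_{j+1}\leq a_{k+j}$ are not unconditional letterwise consequences of ${\bf j}_1\leq{\bf i}_1$ and of the Lyndon property (lexicographic comparison only controls the first position of disagreement), so they must be established inside the induction, using the equalities already obtained at the previous positions — which your phrase "deduce inductively" suggests, but should be made explicit. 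A plausible repair of the missing case would be to run the same kind of induction on the branch that starts with ${\bf j}_1=a_1\cdots a_{k-1}$ and again reach a suffix/prefix contradiction, but as written the proof is incomplete.
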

  
  \begin{proof}
  We write  ${\bf i}_1 = a {\bf k} a {\bf l} $, where ${\bf k}$ and ${\bf l}$ are words whose letters are all strictly greater than $a$, and ${\bf j}_1 = a {\bf m}$.
  First note that the assumptions imply that the word ${\bf k}$ is not empty. 
  As ${\bf i}_1 \geq {\bf j}_1$, one has ${\bf k} \geq {\bf m}$ and thus $a {\bf k} \geq {\bf j}_1$. Moreover ${\bf k} < {\bf l}$ as ${\bf i}_1$ is Lyndon. Hence one has ${\bf j}_1 = a {\bf m} \leq a {\bf k} < a {\bf l}$. 
  
   Lemma~\ref{lem3} implies that ${\bf j}_1$ is shuffled either to the left of ${\bf i}_1$, or between $a{\bf k}$ and $a{\bf l}$, or to the right of ${\bf i}_1$. The first possibility is possible only if ${\bf m}={\bf k}$. But in this case one can apply Lemma~\ref{lem3} in the same way with ${\bf j}_2$. Hence the result of the shuffle would begin either with ${\bf j}_1 {\bf j}_2$ or ${\bf j}_1 a {\bf k}$. Both are strictly smaller than $a {\bf k}a{\bf l} = {\bf i}_1$. 
   
    Now Lemma~\ref{lem3} applied to $a{\bf l}$ and ${\bf j}_1$ implies that ${\bf j}_1$ has to be shuffled to the right of ${\bf i}_1$ which finishes the proof. 
 \end{proof}
  
  \begin{lem} \label{lem5}
   Assume  there are exactly two occurrences of $a_1$ in ${\bf i}_1$ as well as in ${\bf j}_1$.   
  Then ${\bf j}_1$ is shuffled to the right of ${\bf i}_1$ if ${\bf i}_1 > { \bf j}_1$, and either to the left either to the right of ${\bf i}_1$ if ${\bf i}_1 = {\bf j}_1$.
  \end{lem}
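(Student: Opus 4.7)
The plan is to proceed by analyzing a maximal shuffle $s = \sigma \cdot (\mu\nu)$ and eliminating all but the two allowed configurations, mimicking the structure of the proofs of Lemmas~\ref{lem3} and~\ref{lem4}. Write $\alpha := {\bf i}_1 = a{\bf k}a{\bf l}$ and $\beta := {\bf j}_1 = a{\bf m}a{\bf n}$, where all letters of ${\bf k}, {\bf l}, {\bf m}, {\bf n}$ are strictly greater than $a$. Since $a$ is smaller than every letter of these four factors, each of $a{\bf k}$, $a{\bf l}$, $a{\bf m}$, $a{\bf n}$ is itself Lyndon, and the Lyndon property of $\alpha$ gives ${\bf k}a{\bf l} < {\bf l}$ (equivalently $a{\bf k} < a{\bf l}$), with the analogous inequality $a{\bf m} < a{\bf n}$ for $\beta$.

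The first step is to reduce the possible configurations for $\beta$ relative to $\alpha$ to three block positions, by iterating the single-$a$ argument of Lemma~\ref{lem3}. Exactly as in the proof of Lemma~\ref{lem4}, where Lemma~\ref{lem3} was invoked to force ${\bf j}_1$ into one of three regions of ${\bf i}_1$, applying the analogous reasoning to the Lyndon sub-factors $a{\bf m}$ and $a{\bf n}$ of $\beta$ shows that neither of them can be split across the two halves $a{\bf k}$ and $a{\bf l}$ of $\alpha$. Combined with the order constraint that $a{\bf m}$ precedes $a{\bf n}$ inside $\beta$, this leaves only three possibilities for a maximal shuffle: the left configuration $\beta\alpha$, the sandwich $a{\bf k}\,\beta\,a{\bf l}$, and the right configuration $\alpha\beta$.

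The second step is to exclude the sandwich configuration. The candidate $a{\bf k}a{\bf m}a{\bf n}a{\bf l}$ and the target $\alpha\beta = a{\bf k}a{\bf l}a{\bf m}a{\bf n}$ agree on the prefix $a{\bf k}a$, so their comparison reduces to comparing the first letters of ${\bf l}$ and ${\bf m}$. The Lyndon inequality ${\bf k}a{\bf l} < {\bf l}$ forces the first letter of ${\bf l}$ to dominate that of ${\bf k}$, while the hypothesis $\alpha \geq \beta$ combined with the shared initial letter $a$ forces the first letter of ${\bf k}$ to dominate that of ${\bf m}$; chaining these yields $\alpha\beta > a{\bf k}\,\beta\,a{\bf l}$ in the non-degenerate case. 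The degenerate subcases where ${\bf k}$ and ${\bf m}$ (or ${\bf k}$ and ${\bf l}$) share a long common prefix are handled by a recursive lexicographic comparison inside the Lyndon words, exactly as in the proof of Lemma~\ref{lem4}.

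Finally, the left configuration $\beta\alpha$ is excluded in the case $\alpha > \beta$ by a direct comparison with $\alpha\beta$, in the same style as in Lemma~\ref{lem4}; when $\alpha = \beta$ (so ${\bf k} = {\bf m}$ and ${\bf l} = {\bf n}$), the words $\beta\alpha$ and $\alpha\beta$ coincide, which yields the second alternative in the statement. The main obstacle I anticipate is the case work in the sandwich step, where the comparison of first letters can degenerate under prefix coincidences among ${\bf k}, {\bf l}, {\bf m}, {\bf n}$; however, each such degeneracy reduces to the Lyndon factorization inequalities already established, so the proof should close in the same recursive style as the previous lemmas.
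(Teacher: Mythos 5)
Your overall strategy is the same as the paper's (reduce to block-level configurations via Lemma~\ref{lem3}-type arguments applied to the Lyndon factors $a{\bf k}, a{\bf l}, a{\bf m}, a{\bf n}$, then kill the bad configurations by prefix comparison with the candidate word), so the issue is not the route but a concrete enumeration error in your first step. Contiguity of $a{\bf m}$ and of $a{\bf n}$ together with the fact that $a{\bf m}$ precedes $a{\bf n}$ does \emph{not} force these two blocks to be adjacent: besides $\beta\alpha$, $a{\bf k}\,\beta\,a{\bf l}$ and $\alpha\beta$ there are also the configurations $a{\bf m}\,a{\bf k}\,a{\bf n}\,a{\bf l}$, $a{\bf m}\,a{\bf k}\,a{\bf l}\,a{\bf n}$ and $a{\bf k}\,a{\bf m}\,a{\bf l}\,a{\bf n}$, in which ${\bf j}_1$ is split around part (or all) of ${\bf i}_1$. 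These three directly contradict the conclusion of the lemma, so they must be ruled out, and your steps 2 and 3 (which compare only the specific words $a{\bf k}a{\bf m}a{\bf n}a{\bf l}$ and $\beta\alpha$ against $\alpha\beta$) never address them. The gap is fillable with the comparisons you already use: the exclusion of the sandwich really only uses the prefix $a{\bf k}a{\bf m}$ followed by a letter equal to $a$, hence it applies verbatim to $a{\bf k}\,a{\bf m}\,a{\bf l}\,a{\bf n}$; and any shuffle beginning with $a{\bf m}$ is beaten by the candidate (which begins $a{\bf k}a{\bf l}$) using ${\bf m}\leq{\bf k}<{\bf l}$, the subcase ${\bf m}={\bf k}$ requiring one more step of the same comparison, which disposes of $a{\bf m}\,a{\bf k}\,a{\bf n}\,a{\bf l}$ and $a{\bf m}\,a{\bf k}\,a{\bf l}\,a{\bf n}$ when $\alpha>\beta$. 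But as written the proof does not cover them.

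A smaller point: in your exclusion of the sandwich, the comparison does not actually ``reduce to comparing the first letters of ${\bf l}$ and ${\bf m}$'' -- those letters may well coincide, and chaining the two weak inequalities you invoke only gives a weak inequality between them. What closes the argument is that ${\bf m}\leq{\bf k}<{\bf l}$ and that the letter immediately following the inserted ${\bf m}$ in the shuffle is $a$, which is strictly smaller than every letter of ${\bf l}$; so even in the degenerate case where ${\bf m}$ is a proper prefix of ${\bf l}$ the shuffle falls strictly below the candidate at the next letter, with no recursive comparison needed. Stating it this way both fixes the imprecision and makes the extension to the omitted configurations immediate.
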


\begin{proof}
We write ${\bf i}_1 = a {\bf k} a {\bf l}$ and ${\bf j}_1 = a {\bf m} a {\bf n}$ where ${\bf k}, {\bf l}, {\bf m}, {\bf n}$ are words whose letters are all strictly greater than $a$. 
As in the previous Lemma, one has ${\bf k} < {\bf l}$ and ${\bf m} < {\bf n}$. Moreover one has ${\bf k} \geq {\bf m}$, and in case of equality one has ${\bf n} \leq {\bf l}$. 

 If ${\bf i}_1 > {\bf j}_1$ then either ${\bf k} <{\bf m}$ or $ {\bf k} = {\bf m} $ and ${\bf n} < {\bf l}$. The same proof as for Lemma~\ref{lem4} show that ${\bf j}_1$ is shuffled to the right of ${\bf i}_1$. 
 The case  ${\bf i}_1 = {\bf j}_1$ also follows from the same arguments. 
 \end{proof}

  \begin{cor} \label{cor1}
  Assume ${\bf i}_1$ and ${\bf j}_1$ contain at most two occurrences of their first letters. 
 Then $\sigma \cdot (\mu \nu)$ is either the concatenation of ${\bf i}_1$ with ${\bf i}_2 \cdots {\bf i}_r \odot  {\bf j}_1 \cdots {\bf j}_s$ or the concatenation of ${\bf j}_1$ with ${\bf i}_1 \cdots {\bf i}_r \odot  {\bf j}_2 \cdots {\bf j}_s$. 
  \end{cor}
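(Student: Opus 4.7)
The plan is to combine the four preceding lemmas by a case analysis on the relation between the first letters $a_1$ of ${\bf i}_1$ and $b_1$ of ${\bf j}_1$, and then close using the induction hypothesis on $r+s$ already in force.

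Assume, as above, that ${\bf i}_1 \geq {\bf j}_1$; since the first letter of a good Lyndon word is the smallest letter occurring in it, this entails $a_1 \geq b_1$. If $a_1 > b_1$, Lemma~\ref{lem2} directly implies that ${\bf i}_1$ appears as a contiguous prefix of $\sigma \cdot (\mu\nu)$. If $a_1 = b_1$, the standing hypothesis that each of ${\bf i}_1$ and ${\bf j}_1$ contains at most two occurrences of its first letter leaves exactly three subcases to consider: (a) ${\bf i}_1$ has only one occurrence of $a_1$, handled by Lemma~\ref{lem3}; (b) ${\bf i}_1$ has two occurrences and ${\bf j}_1$ has one, handled by Lemma~\ref{lem4}; (c) both contain exactly two occurrences, handled by Lemma~\ref{lem5}. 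In each of these subcases the corresponding lemma guarantees that one of ${\bf i}_1$, ${\bf j}_1$ occurs as a contiguous prefix of $\sigma \cdot (\mu\nu)$; the only genuine ambiguity is the symmetric case ${\bf i}_1 = {\bf j}_1$, where either of the two blocks may come first.

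Once the leading block has been identified, the suffix of $\sigma \cdot (\mu\nu)$ consists of a shuffle of the two remaining words, since $\sigma$ still restricts to an increasing map on each of the original factors after the prefix is stripped off. Because $\sigma$ realizes the maximum shuffle of $\mu\nu$, this suffix must itself realize the maximum among shuffles of the complement. By the induction hypothesis on $r+s$ this maximum equals ${\bf i}_2 \cdots {\bf i}_r \odot {\bf j}_1 \cdots {\bf j}_s$ when the prefix is ${\bf i}_1$, and ${\bf i}_1 \cdots {\bf i}_r \odot {\bf j}_2 \cdots {\bf j}_s$ when the prefix is ${\bf j}_1$, which is precisely the claimed dichotomy. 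I anticipate that the main obstacle is purely bookkeeping: matching each of the four lemmas to the exact subcase it governs, and checking the degenerate case ${\bf i}_1 = {\bf j}_1$ in which both alternatives may occur but produce the same resulting word.
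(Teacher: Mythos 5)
Your proof is correct and follows essentially the same route as the paper: the same case analysis on the first letters and on the number of their occurrences (matching Lemmas~\ref{lem2}--\ref{lem5} to identify the leading block), followed by the induction hypothesis on $r+s$ applied to the remaining shuffle. The only, harmless, difference is that you invoke the maximality of $\sigma$ explicitly to obtain the stated equality, whereas the paper phrases its conclusion as the inequality $\sigma\cdot(\mu\nu)\leq {\bf l}_1\cdots{\bf l}_{r+s}$.
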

  
  \begin{proof}
  If ${\bf i}_1 > {\bf j}_1$ then ${\bf l}_1 = {\bf i}_1$ and the previous lemmas show that $\sigma \cdot (\mu \nu)$ is the concatenation of ${\bf i}_1$ with a shuffle of ${\bf i}_2 \cdots {\bf i}_r$ and ${\bf j}_1 \cdots {\bf j}_s$. By the induction assumption, any such shuffle is smaller than ${\bf l}_2 \cdots {\bf l}_{r+s}$. Hence $\sigma \cdot (\mu \nu) \leq {\bf i}_1 {\bf l}_2 \cdots {\bf l}_{r+s} = {\bf l}_1 \cdots {\bf l}_{r+s}$. The case ${\bf i}_1 < {\bf j}_1$ is analogous. 
  
   If ${\bf i}_1 = {\bf j}_1$, then ${\bf l}_1 = {\bf l}_2 = {\bf i}_1$. By the previous lemmas, $\sigma \cdot (\mu \nu)$ is either the concatenation of ${\bf l}_1$ with a shuffle of ${\bf i}_2 \cdots {\bf i}_r$ and ${\bf j}_1 \cdots {\bf j}_s$ or the concatenation of ${\bf l}_1$ with a shuffle of ${\bf i}_1 \cdots {\bf i}_r$ and ${\bf j}_2 \cdots {\bf j}_s$. In both cases the conclusion is the same. 
  \end{proof}
 
\begin{cor} \label{cor2}
Assume $\mathfrak{g}$ is of arbitrary finite type and $<$ is the natural ordering on $I$ or $\mathfrak{g}$ is of classical type and $<$ is an arbitrary ordering on $I$. Then Equation~\eqref{candidate} holds.
 \end{cor}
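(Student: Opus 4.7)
The plan is to combine the induction on $r+s$ already set up in the subsection with Corollary~\ref{cor1}. The induction hypothesis gives that ${\bf i}_2 \cdots {\bf i}_r \odot {\bf j}_1 \cdots {\bf j}_s$ and ${\bf i}_1 \cdots {\bf i}_r \odot {\bf j}_2 \cdots {\bf j}_s$ agree with the respective decreasing concatenations in $\mathcal{GL}$. Then Corollary~\ref{cor1}, applied with ${\bf i}_1$ and ${\bf j}_1$, produces $\mu \odot \nu$ as either ${\bf i}_1 \cdot ({\bf i}_2 \cdots {\bf i}_r \odot {\bf j}_1 \cdots {\bf j}_s)$ or ${\bf j}_1 \cdot ({\bf i}_1 \cdots {\bf i}_r \odot {\bf j}_2 \cdots {\bf j}_s)$, and in both cases the decreasing rearrangement of $\{{\bf i}_1 , \ldots , {\bf i}_r , {\bf j}_1 , \ldots , {\bf j}_s \}$ is recovered. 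So everything boils down to verifying the hypothesis of Corollary~\ref{cor1}, namely that for the types and orderings in the statement, \emph{every good Lyndon word contains at most two occurrences of its first letter}.

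To establish this structural claim, I would invoke the explicit descriptions of $\mathcal{GL}$ supplied in \cite{KR}. For classical types $A_n,B_n,C_n,D_n$ with an arbitrary total order on $I$, the list of good Lyndon words is read off from \cite[Sections~8.8--8.10]{KR}; a direct inspection of each family shows that the smallest letter of any good Lyndon word appears at most twice (in types $A_n$ once; in types $B_n,C_n,D_n$ at most twice, this second occurrence appearing only in the words indexed by roots of the form $\alpha_i + \alpha_{i+1} + \cdots + \alpha_j + \cdots + \alpha_{i+1} + \alpha_i$ or their $<$-analogs). For exceptional types $E_6,E_7,E_8,F_4,G_2$ with the \emph{natural} order on $I$, one uses \cite[Section~8.2]{KR} (and the attached tables/computations in \cite{KR}) to list $\mathcal{GL}$, and again checks case by case that the first letter of any good Lyndon word occurs at most twice.

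The main obstacle is precisely this structural verification: the argument of Lemmas~\ref{lem2}--\ref{lem5} only covers first letters appearing at most twice, so outside the range of the present corollary (namely, exceptional types with non-natural orderings) it is a priori conceivable that $\mathcal{GL}$ contains words whose first letter is repeated three or more times, which would require extending the shuffle analysis. Within the cases claimed, however, the check is purely combinatorial and finite, and the inductive skeleton above then delivers Equation~\eqref{candidate}, hence Equation~\eqref{candidate} for arbitrary $\mu,\nu \in \M$, which is equivalent to the statement $\mu \odot \nu$ is the decreasing concatenation of the factors of $\mu$ and $\nu$. From this, Proposition~\ref{monoid} follows immediately because $\varphi$ is evidently a bijection of sets, and the newly established shuffle formula is exactly the statement that $\varphi(\mu \odot \nu) = \varphi(\mu) + \varphi(\nu)$.
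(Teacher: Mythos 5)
Your proposal follows essentially the same route as the paper: the induction plus Corollary~\ref{cor1} reduces everything to checking that good Lyndon words have at most two occurrences of their first letter, and for exceptional types with the natural order this is checked against the explicit lists in \cite[Section 8.2]{KR}, exactly as the paper does. The one place where you diverge is the classical types with an \emph{arbitrary} ordering: you propose to read the bound off the lists in \cite[Sections 8.8--8.10]{KR}, but those lists are computed for particular orderings, so they do not by themselves cover all orders; the paper instead observes that in types $A,B,C,D$ every positive root contains each simple root with coefficient at most $2$, so any word whose weight is a positive root (in particular any element of $\mathcal{GL}$) has at most two occurrences of any letter, independently of the choice of $<$. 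Your parenthetical remark about roots of the form $\alpha_i + \cdots + \alpha_j + \cdots + \alpha_i$ shows you have the right fact in mind, but the justification should be phrased through this ordering-independent coefficient bound rather than through inspection of order-specific lists; with that substitution your argument coincides with the paper's proof.
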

 
 \begin{proof}
 For $\mathfrak{g}$ of types $A,B,C,D$, the positive roots contain at most to occurences of any simple root. A fortiori the elements of $\mathcal{GL}$ contain at most two occurrences of their first letter and this holds for any ordering $<$ on $I$. When $\mathfrak{g}$ is of exceptional type and $<$ is the natural ordering, the elements of $\mathcal{GL}$ are described in {{\cite[Section 8.2]{KR}}} and one can see that they always contain at most two occurrences of their first letter. Hence in these cases, Corollary~\ref{cor1} implies that the right hand side of Equation~\eqref{candidate} is the greatest shuffle of $\mu$ and $\nu$, proving Proposition~\ref{monoid}.
 
 \end{proof}

 \subsection{A compatible seed for $\Cw$}
  \label{sectionmainresult}

 In this subsection we consider $\mathfrak{g}$ of type $A_n, D_n$ or $E_n$  and we prove that for any $w \in W$, there exists a reduced expression ${\bf w}$ of $w$ such that the seed $\mathcal{S}^{\bf w}$ is compatible in the sense of {{\cite[Definition 4.7]{Casbi}}}.
 
 We fix an arbitrary total order $<$ on $I$. We again denote by $<$ the induced lexicographic order on the set $\mathcal{M}$ as well as its restriction to $\mathcal{GL}$ (see Section~\ref{remindKLR}). Via the bijection between $\mathcal{GL}$ and $\Phi_{+}$, one can view $<$ as an ordering on $\Phi_{+}$. For any $w \in W$, we consider the restriction of $<$ to $\Phi_{+}^{w}$. As there is a bijection between convex orderings on $\Phi_{+}^{w}$ and reduced expressions of $w$ (see Section~\ref{remindKK}), we consider the unique reduced expression ${\bf w}_{<}$ of $w$ corresponding to $<$.    
 We fix once for all the order $<$ and we write ${\bf w}$ instead of ${\bf w}_{<}$ if there is no ambiguity.

 First we introduce a notation that will be useful in the following. For any $1 \leq k \leq N$, we set 
 $$ J_{k} := \{j \leq k | i_j=i_k \} $$
  and we write $J_k = \{j_0=k > j_1 > \cdots > j_{r_k} \}$. In other words, $j_{0} = k , j_{1} = k_{-} , j_{2}=  (k_{-})_{-}, \ldots $ with the notations of {{\cite[Section 9.4]{GLS}}}. The integer $r_{k}$ corresponds to the position of the first occurrence of the letter $i_k$ in the word $(i_1, \ldots , i_N)$.
  
  \smallskip

The following statement is the main result of this section. It gives a description of the simple modules in $\Cw$ corresponding to the cluster variables of the seed  $\mathcal{S}^{\bf w}$.

 \begin{thm}  \label{initpara}
Let $(x_1, \ldots , x_N)$ denote the cluster variables of the seed $\mathcal{S}^{\bf w}$ and let $\mu_k$ denote the dominant word such that $x_k=[L(\mu_k)]$ for every $k \in J$. Then 
 $$ \mu_k = {\bf i}_{j_{0}} {\bf i}_{j_{1}} \cdots {\bf i}_{j_{r_k}} . $$
 \end{thm}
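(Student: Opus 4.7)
The plan is to derive the claim from \cite[Proposition 3.14]{KK}, which gives an explicit expression of the right $g$-vector ${\bf g}^R_{\mathcal{S}^{\bf w}}(M)$ of a self-dual simple module $M$ in $\Cw$ in terms of its cuspidal decomposition. Write the canonical factorization of $\mu_k$ as $\mu_k = ({\bf i}_N)^{c_N^{(k)}} \cdots ({\bf i}_1)^{c_1^{(k)}}$. The goal then reduces to showing that $c_j^{(k)} = 1$ when $j \in J_k$ and $c_j^{(k)} = 0$ otherwise: since larger indices in the reduced expression ${\bf w}$ correspond to larger good Lyndon words under the ordering $<$, the elements $j_0 > j_1 > \cdots > j_{r_k}$ of $J_k$ give rise to a strictly decreasing sequence ${\bf i}_{j_0} > {\bf i}_{j_1} > \cdots > {\bf i}_{j_{r_k}}$, and this yields exactly the claimed canonical factorization $\mu_k = {\bf i}_{j_0} {\bf i}_{j_1} \cdots {\bf i}_{j_{r_k}}$.

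Translated into the notation of the present paper, \cite[Proposition 3.14]{KK} should read
\[ {\bf g}^R_{\mathcal{S}^{\bf w}}([L(\mu)])_j = c_j - c_{j_+} \qquad \text{for every } 1 \leq j \leq N, \]
where $\mu = ({\bf i}_N)^{c_N} \cdots ({\bf i}_1)^{c_1}$ is the canonical factorization of an arbitrary dominant word in $\M_w$, under the convention $c_{N+1} = 0$; a frozen index $j \in J_{fr}$ then contributes simply $c_j$. Combined with the general fact, also established in \cite{KK}, that the right $g$-vector of a cluster variable with respect to its own seed coincides with the corresponding standard basis vector, I obtain ${\bf g}^R_{\mathcal{S}^{\bf w}}([L(\mu_k)]) = e_k$, and therefore the triangular system $c_j^{(k)} - c_{j_+}^{(k)} = \delta_{jk}$ for every $j \in J$.

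Solving this system iteratively, starting from the frozen indices (where $c_{j_+}^{(k)} = 0$ by the convention above) and proceeding backwards along the chains $j \mapsto j_-$, one sees that $c_j^{(k)}$ equals the indicator of the condition $k \in \{j, j_+, j_{++}, \ldots\}$. Since $\{j, j_+, j_{++}, \ldots\}$ is by definition the set of positions $\geq j$ at which the letter $i_j$ occurs, this condition is equivalent to $j \leq k$ together with $i_j = i_k$, i.e.\ to $j \in J_k$, and the theorem follows. The main technical step will be to carefully translate \cite[Proposition 3.14]{KK} into the language of cuspidal decompositions and the $k \mapsto k_+$ combinatorics of \cite[Section 9.4]{GLS}; once this dictionary is established, the remainder is an elementary triangular computation.
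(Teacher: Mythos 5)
Your proposal is correct and follows essentially the same route as the paper's own proof: invoking \cite[Proposition 3.14]{KK} to identify the entries of ${\bf g}^{R}_{\mathcal{S}^{\bf w}}([L(\mu_k)])$ with the differences $c_{j,k}-c_{j_{+},k}$, using that a cluster variable's $g$-vector in its own seed is the standard basis vector $e_k$, and solving the resulting triangular system along the chains $j \mapsto j_{-}$ to get $c_{j,k}=1$ exactly for $j \in J_k$. Your explicit remark that ${\bf i}_{j_0} > {\bf i}_{j_1} > \cdots > {\bf i}_{j_{r_k}}$ (so that the resulting word is indeed the canonical factorization) is a detail the paper leaves implicit, but it is not a deviation in method.
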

 
 \begin{proof}
 We write the canonical factorization of $\mu_k$ as 
 $$ \mu_k = ({\bf i}_N)^{c_{N,k}} \cdots ({\bf i}_1)^{c_{1,k}}  $$
 with  $c_{N,k}, \ldots , c_{1,k} \in \mathbb{N}$. We also set $c_{(N+1),N} := 0$.   By {{\cite[Proposition 3.14]{KK}}}, the t-uple of integers $(c_{1,k}-c_{1_{+},k} , \ldots , c_{N,k}-c_{N_{+},k})$ is the image of $[L(\mu_k)]$ under the map ${\bf g}_{\mathcal{S}_{0}^{\bf w}}^{R}$ defined in \cite{KK} (see {{\cite[Definition 3.8]{KK}}}). It is clear from this definition that the isomorphism class of the simple module $L(\mu_k)$ is mapped onto the $k$th vector $e_k$ of the standard basis of $\zn$. Thus one has 
 $$ e_k = (c_{1,k}-c_{1_{+},k} , \ldots , c_{N,k}-c_{N_{+},k}) .$$
    One has $c_{j,k} - c_{j_{+},k} = 0 $ for any $j \neq k$. In particular, one has 
    $$c_{k_{+},k} = c_{(k_{+})_{+},k} = \cdots = c_{(N+1),k} := 0  \quad \text{and} \quad  c_{r_{k},k} =  \cdots = c_{k_{-},k} = c_{k,k} . $$ 
    Moreover, $c_{k,k}-c_{k_{+},k} = 1$ and hence $c_{k,k}=1$. Finally one has 
    $$ c_{r_{k},k} =  \cdots = c_{k_{-},k} = c_{k,k} = 1 \quad \text{and} \quad c_{k_{+},k} = c_{(k_{+})_{+},k} = \cdots = 0  . $$
  If $j$ is any position such that the letter $i_j$ is different from $i_k$ then $k$ does not appear in the sequence $(r_{j}, \ldots , j_{-}, j , j_{+} , \ldots , N+1)$ and hence $c_{r_{j},k} =  \cdots = c_{j_{-},k} = c_{j,k} =  c_{j_{+},k} = \cdots = 0$. One concludes:
  $$ \mu_k = {\bf i}_k {\bf i}_{k_{-}} \cdots {\bf i}_{r_{k}} . $$

 \end{proof}

Let us point out a couple of consequences which will be useful later. 

\begin{cor} \label{ordrepara}
 Let $k \in J$. For any integer-valued t-uple $(c_{j})_{j \in J, j > k}$, one has
 $$ \mu_k > \bigodot_{j < k} \mu_j^{\odot c_j} $$ 
 in the group $\G$. 
\end{cor}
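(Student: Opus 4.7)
The plan is to rewrite the claim through the monoid isomorphism $\varphi$ of Proposition~\ref{monoid}. Since only the good Lyndon words ${\bf i}_1, \ldots, {\bf i}_N$ appear in $\M_w$, the restriction of $\varphi$ identifies $(\M_w, \odot)$ with $(\mathbb{N}^N, +)$ and extends to a group isomorphism $\varphi : \G_w \simeq \mathbb{Z}^N$.

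The first step is to translate the total order on $\G_w$ into a concrete condition on $\mathbb{Z}^N$. By {{\cite[Definition 4.4, Proposition 4.5]{Casbi}}}, the lexicographic order on dominant words extends to a total order on $\G_w$; because the canonical factorization of a dominant word lists good Lyndon words from largest (at position $N$) down to smallest (at position $1$), an element of $\G_w$ is positive precisely when the highest-indexed nonzero coordinate of its image under $\varphi$ is positive. Hence the desired inequality $\mu_k > \bigodot_{j<k} \mu_j^{\odot c_j}$ is equivalent to the assertion that the vector
\[
\varphi(\mu_k) - \sum_{j<k} c_j\, \varphi(\mu_j) \in \mathbb{Z}^N
\]
has positive highest-indexed nonzero entry.

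The second step is the key computation, which is immediate from Theorem~\ref{initpara}. The vector $\varphi(\mu_k)$ has entry $1$ at each position $l \in J_k = \{r_k, \ldots, k_-, k\}$ and $0$ elsewhere; in particular its $k$-th entry equals $1$ and all entries at positions $l > k$ vanish. For every $j < k$, the vector $\varphi(\mu_j)$ is supported on $J_j \subseteq \{1, \ldots, j\}$, so it vanishes on all positions $l \geq k$. Any integer linear combination $\sum_{j<k} c_j\, \varphi(\mu_j)$ therefore also vanishes on positions $l \geq k$. Consequently the displayed difference vanishes on positions $l > k$ and equals $+1$ at position $k$, yielding the inequality.

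Once Theorem~\ref{initpara} is in hand the argument reduces to a clean observation about supports in $\mathbb{Z}^N$, so there is essentially no serious obstacle. The only point deserving explicit care is the identification of the order on $\G_w$ with the reverse-lex order on $\mathbb{Z}^N$ (comparing positions from $N$ downward), which follows directly from the form of the canonical factorization but should be stated explicitly to avoid any ambiguity in the direction of comparison.
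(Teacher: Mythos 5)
Your proof is correct and follows essentially the same route as the paper: both arguments rest on Theorem~\ref{initpara} identifying ${\bf i}_k$ as the highest good Lyndon word (with exponent $1$) in the canonical factorization of $\mu_k$, while every $\mu_j$ with $j<k$ only involves strictly smaller Lyndon words. The only cosmetic difference is that you phrase the comparison in $\mathbb{Z}^N$ via $\varphi$ and the reversed lexicographic order of Section~\ref{totord}, whereas the paper moves the negatively-weighted factors to the left-hand side and compares the resulting dominant words directly in $\M$ before passing to $\G$.
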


\begin{proof}
 By Theorem~\ref{initpara}, the highest good Lyndon word in the canonical factorization of $\mu_k$ (resp. $\mu_j , j>k$) is ${\bf i}_k$ (resp. ${\bf i}_j , j>k$). Hence by definition of the lexicographic order on $\M$ one has 
 $$ \mu_k \odot \bigodot_{j < k , c_j <0} \mu_j^{\odot -c_j} > \bigodot_{j < k , c_j >0} \mu_j^{\odot c_j}$$
 which implies 
 $$ \mu_k > \bigodot_{j < k} \mu_j^{\odot c_j} $$ 
 in the Grothendieck group $\G$ of $\M$. 
\end{proof}

\begin{cor} \label{invert}
 Let $\s = ((x_1, \ldots , x_N),B)$ be any seed in $\Aqnw$ and let $\mathcal{M}_{\s}$ denote the matrix of the vectors $\varphi(\tilde{\Psi}(x_1)), \ldots , \varphi(\tilde{\Psi}(x_N))$ in the standard basis of $\zn$. Then $\mathcal{M}_{\s} \in GL_N(\mathbb{Z})$.
 \end{cor}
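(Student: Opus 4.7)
My approach is to first prove the claim for the distinguished seed $\mathcal{S}^{\mathbf{w}}$, and then propagate it to an arbitrary seed by induction on the mutation distance from $\mathcal{S}^{\mathbf{w}}$.

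The base case is immediate from Theorem~\ref{initpara}: the formula $\mu_k = \mathbf{i}_k \mathbf{i}_{k_-} \cdots \mathbf{i}_{r_k}$ tells us that the column $\Psi(x_k) = \varphi(\mu_k)$ of $\mathcal{M}_{\mathcal{S}^{\mathbf{w}}}$ has a $1$ in position $k$ and a $0$ in every position above $k$ (when rows are indexed by the good Lyndon words $\mathbf{i}_1 < \cdots < \mathbf{i}_N$). Thus $\mathcal{M}_{\mathcal{S}^{\mathbf{w}}}$ is lower unitriangular, and in particular lies in $GL_N(\mathbb{Z})$ with determinant $1$.

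For the inductive step, I would compare the matrices $\mathcal{M}_\s$ and $\mathcal{M}_{\s'}$ where $\s'$ is the mutation of $\s$ in the direction $k$. Only the $k$-th column changes, from $\Psi(x_k)$ to $\Psi(x'_k)$, so it suffices to express $\Psi(x'_k)$ as an integer linear combination of the columns of $\mathcal{M}_\s$ with coefficient $-1$ on the $k$-th column. The key input is the exchange relation in $\Aqnw$,
$$ x_k x'_k = M_+ + M_-, $$
where $M_\pm$ are monomials in the remaining cluster variables. By the monoidal categorification result {{\cite[Theorem 11.2.3]{KKKO}}}, the cluster monomials $M_\pm$ are classes of simple modules in $\Cw$, so the product $L(\mu_k) \circ L(\mu'_k)$ has exactly two Jordan--H\"older constituents, and its head is $L(\mu_k \odot \mu'_k)$. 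Applying the additivity identity~\eqref{additivity} to $\tilde\Psi(M_+)$ and $\tilde\Psi(M_-)$, both of which are $\odot$-products of the $\mu_j$ with $j \neq k$, one concludes that
$$ \Psi(x_k) + \Psi(x'_k) = \sum_{j \neq k} a_j \Psi(x_j) $$
for some non-negative integers $a_j$. Hence $\mathcal{M}_{\s'}$ is obtained from $\mathcal{M}_\s$ by the elementary column operation that negates the $k$-th column and adds an integer combination of the other columns; this operation has determinant $-1$. This closes the induction, and since every seed is mutation-equivalent to $\mathcal{S}^{\mathbf{w}}$, one obtains $\det \mathcal{M}_\s \in \{\pm 1\}$ for every $\s$, hence $\mathcal{M}_\s \in GL_N(\mathbb{Z})$.

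The delicate point is the identification of the two Jordan--H\"older constituents of $L(\mu_k) \circ L(\mu'_k)$ with the simples corresponding to $M_+$ and $M_-$, and the fact that the head has dominant word $\mu_k \odot \mu'_k$: both rely on the categorical inputs of \cite{KKKO} (simplicity of cluster monomials and existence of heads for products of real simples). Granting these, the rest is an elementary column-operation computation together with the connectivity of the mutation graph.
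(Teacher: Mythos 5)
Your proposal is correct and follows essentially the same route as the paper: unitriangularity of $\mathcal{M}_{\mathcal{S}^{\bf w}}$ from Theorem~\ref{initpara}, then induction along mutations using the exchange relation to show the new column equals minus the old one plus a non-negative integer combination of the others, so the determinant stays $\pm 1$. The only cosmetic difference is that the paper obtains this column relation directly from the valuation property of $\Psi$ applied to $x_k x'_k = M_+ + M_-$ (the content of Lemma~\ref{epsilonmutation}), whereas you rederive it through the short exact sequence and Jordan--H\"older constituents; the conclusion is identical.
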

 
 \begin{proof}
 First consider the seed $\mathcal{S}^{\bf w}$. By Theorem~\ref{initpara}, there is a bijection between the cluster variables of $\mathcal{S}^{\bf w}$ and good Lyndon words in $\M_w$: indeed, for any $1 \leq j \leq N$, there is a unique cluster variable in $\mathcal{S}^{\bf w}$ whose corresponding dominant word has a canonical factorization beginning with ${\bf i}_j$. Hence choosing a good permutation of the standard basis of $\zn$, the matrix $\mathcal{M}_{\mathcal{S}_{0}^{\bf w}}$ is (lower) unitriangular. In particular $\mathcal{M}_{\mathcal{S}_{0}^{\bf w}}$ is  invertible with determinant equal to $1$.
 
  Now consider a mutation in any direction $k \in J_{ex}$. Set $\Psi := \varphi \circ \tilde{\Psi}$. The vector $\Psi(x'_k)$ is either equal to $-\Psi(x_k) + \sum_{b_{ik}>0} b_{ik} \Psi(x_i)$ or to $-\Psi(x_k) + \sum_{b_{ik}<0} (-b_{ik}) \Psi(x_i)$. In the first case, one has
  \begin{align*}
  \det(\Psi(x'_1) , \ldots , \Psi(x'_N))  
   &= \det( \Psi(x_1), \ldots , \Psi(x_{k-1}), \Psi(x'_k) , \Psi(x_{k+1}) , \ldots , \Psi(x_N)) \\
   &=- \det( \Psi(x_1), \ldots , \Psi(x_{k-1}), \Psi(x_k) ,\Psi(x_{k+1}) , \ldots , \Psi(x_N))  \\
     + & \sum_{b_{ik}>0}  \det( \Psi(x_1), \ldots , \Psi(x_{k-1}), \Psi(x_i) , \Psi(x_{k+1}) , \ldots , \Psi(x_N)) \\
     &= - \det(\Psi(x_1) , \ldots , \Psi(x_N)).
 \end{align*}
 The other case is analogous. 
 In particular the matrix obtained after mutation is still invertible and has determinant either $1$ or $-1$. By induction, we conclude that $\mathcal{M}_{\s} \in G_N(\mathbb{Z})$ for any seed $\s$. 
 \end{proof}
 
  We end this section by proving that {{\cite[Conjecture 4.10]{Casbi}}} holds in $\Cw$ for every $w \in W$ when the underlying Lie algebra $\mathfrak{g}$ is of finite type. The proof is similar to the proof of {{\cite[Theorem 6.2]{Casbi}}} in the case of $\C_{w_0} = R-gmod$ in type $A_n$. 
  
  \begin{cor} \label{corcompat}
 Fix any total order $<$ on $I$. For any $w \in W$, the seed $\mathcal{S}
 ^{{\bf w}_{<}}$ is compatible in the sense of {{\cite[Definition 4.7]{Casbi}}}. 
\end{cor}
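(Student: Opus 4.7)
The plan is to verify the compatibility condition of \cite[Definition 4.7]{Casbi} for the seed $\mathcal{S}^{{\bf w}_{<}}$, following closely the structure of the proof of \cite[Theorem 6.2]{Casbi} in type $A_n$. The key point is that every combinatorial ingredient of that argument now extends to arbitrary finite type and arbitrary $w \in W$ thanks to Theorem~\ref{initpara} and its corollaries, so it should suffice to reassemble the pieces in the same order.

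First, I would unpack the compatibility condition: it requires, for every $k \in J_{ex}$, that the generalized parameter $\mkh = \bigodot_i \mu_i^{\odot b_{ik}}$ be strictly dominated by $\mu_k$ in the lexicographic order on $\G_w$. The content of Theorem~\ref{initpara} is that the canonical factorization of $\mu_k$ has leading good Lyndon factor ${\bf i}_k$; in particular, the dominant words $\mu_1, \ldots, \mu_N$ of $\mathcal{S}^{{\bf w}_{<}}$ are totally ordered, with the order on indices matching (up to the natural relabeling) the convex order on $\Phi_{+}^{w}$ induced by $<$.

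The core step is to apply Corollary~\ref{ordrepara} with $c_j = b_{jk}$, which immediately yields
$$ \mu_k > \bigodot_{j<k} \mu_j^{\odot b_{jk}} \qquad \text{in } \G_w , $$
thereby accounting for all contributions to $\mkh$ coming from indices $j < k$. This is precisely the mechanism by which the analogous compatibility was established in the $A_n$ case.

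The main obstacle lies in handling the contributions $\mu_j^{\odot b_{jk}}$ with $j > k$, since Corollary~\ref{ordrepara} does not directly cover these. To deal with them, I would rewrite the desired inequality as
$$ \mu_k \odot \bigodot_{j > k} \mu_j^{\odot (-b_{jk})} > \bigodot_{j < k} \mu_j^{\odot b_{jk}} , $$
and compare the leading good Lyndon factors on the two sides using the explicit form from Theorem~\ref{initpara}. Since the letter ${\bf i}_j$ is the leading factor of $\mu_j$ and appears only as a non-leading factor of any $\mu_l$ with $l > j$, the leading factor of the left-hand side is either ${\bf i}_k$ or some ${\bf i}_j$ with $j > k$, both of which strictly exceed every leading factor ${\bf i}_l$ with $l < k$ occurring on the right-hand side. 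A short case analysis, entirely parallel to the one carried out in the proof of \cite[Theorem 6.2]{Casbi}, then closes the argument.
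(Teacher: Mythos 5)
Your unpacking of the compatibility condition is not the one the paper actually verifies, and the inequality you set out to prove is false under the paper's conventions. What \cite[Definition 4.7, Remark 4.8]{Casbi} requires (and what the paper checks) is a \emph{positivity} statement for the generalized parameters: $\mjh \odot \mu > \mu$ for every $\mu \in \M_w$, i.e. $\mjh$ is strictly greater than the unit of $\G_w$ --- not that $\mkh$ is dominated by $\mu_k$. Indeed, for the seed $\mathcal{S}^{{\bf w}_{<}}$ the GLS/KKKO construction gives the explicit formula $\yjh = x_{j_{+}} x_{j_{-}}^{-1} \prod_{j<k<j_{+}<k_{+}} x_k^{-|a_{kj}|} \prod_{k<j<k_{+}<j_{+}} x_k^{|a_{kj}|}$, so that $\mjh = \mu_{j_{+}} \odot (\text{a combination of words } \mu_k \text{ with } k<j_{+})$; by Theorem~\ref{initpara} the vector $\varphi(\mu_{j_{+}})$ is the only contribution with a nonzero entry in position $j_{+}$, and it occurs with exponent $+1$, so in fact $\mjh > \mu_j$. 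This shows that the step you yourself flag as the main obstacle (the factors with index $>k$) is exactly where your argument collapses: the largest index occurring in $\mkh$ is $k_{+}>k$, it occurs with positive sign, and Corollary~\ref{ordrepara} gives no control over products involving indices above $k$. Note also that in the totally ordered group $\G_w$ a ``large'' leading Lyndon factor occurring with a \emph{negative} exponent makes an element small, not large, so the leading-factor case analysis you sketch cannot be repaired without knowing the signs and positions of the entries $b_{jk}$, which is precisely the missing input.

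The actual proof is shorter and hinges on that structural input about the initial seed rather than on a direct comparison with $\mu_k$: starting from the explicit expression of $\yjh$ above (every cluster variable other than $x_{j_{+}}$ appearing in it has index strictly less than $j_{+}$), apply $\tilde{\Psi}$ and then Corollary~\ref{ordrepara} \emph{at the index $j_{+}$}, not at $k=j$. This immediately gives that $\mu_{j_{+}}$ dominates the remaining (integer-exponent) combination of the $\mu_k$, $k<j_{+}$, hence $\mjh \odot \mu > \mu$ for every $\mu \in \M_w$, which is the criterion of \cite[Remark 4.8]{Casbi} for compatibility. So Theorem~\ref{initpara} enters only through Corollary~\ref{ordrepara}, and no case analysis on shuffled leading factors is needed; but without quoting the explicit form of the exchange data of $\mathcal{S}^{{\bf w}_{<}}$ from \cite{GLS,KKKO}, neither positivity of $\mjh$ nor any comparison with $\mu_k$ can be extracted from Theorem~\ref{initpara} alone.
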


\begin{proof}
As in Theorem~\ref{initpara}, for every $1 \leq k \leq N$ we let $\mu_k$ denote the dominant word corresponding to the $k$th cluster variable of $\mathcal{S}^{{\bf w}_{<}}$. With the same notations as in Section~\ref{remindCasbi}, we consider the variables $\yjh , j \in J_{fr}$. It follows from the construction of $\mathcal{S}^{\bf w}$ (\cite{GLS,KKKO}) that for every $j \in J_{fr}$,
$$ \yjh = x_{j_{+}} x_{j_{-}}^{-1} \prod_{j<k<j_{+}<k_{+}} x_k^{-|a_{kj}|} \prod_{k<j<k_{+}<j_{+}} x_k^{|a_{kj}|} . $$
Hence 
$$ \mjh = \mu_{j_{+}} \odot \left( \mu_{j_{-}}^{\odot -1} \odot \bigodot_{k<j<k_{+}<j_{+}} \mu_k^{\odot |a_{kj}|} \odot \bigodot_{j<k<j_{+}<k_{+}} \mu_k^{\odot -|a_{kj}|} \right) . $$
The expression between brackets only involves words $\mu_k$ such that $k<j_{+}$. Hence by Corollary~\ref{ordrepara}, 
$$ \mu_{j_{+}}  >  \left( \mu_{j_{-}}^{\odot -1} \odot \bigodot_{k<j<k_{+}<j_{+}} \mu_k^{\odot |a_{kj}|} \odot \bigodot_{j<k<j_{+}<k_{+}} \mu_k^{\odot -|a_{kj}|} \right) . $$
Thus one has $\mjh \odot \mu > \mu$ for every $\mu \in \M_w$ and this holds for every $j \in J_{fr}$. This implies that the seed $\mathcal{S}^{\bf w}$ is compatible (see {{\cite[Remark 4.8]{Casbi}}}). 
\end{proof}

 \begin{rk}
 By {{\cite[Corollary 4.12]{Casbi}}}, the seed $\mathcal{S}^{\bf w}$ being compatible implies certain relationships between dominant words and $g$-vectors. This relationship is provided by {{\cite[Proposition 3.14]{KK}}} for any $w \in W$. It takes the form expected in {{\cite[Section 7.1]{Casbi}}} in the case of $R-gmod$ in type $A_n$ for the natural ordering. 
 \end{rk}
   
   \begin{ex} \label{example_sl3}
   Consider $\mathfrak{g}$ of type $A_2$, $I=\{1,2\}$, $w =w_0=s_1s_2s_1=s_2s_1s_2$. Consider the natural ordering $1<2$. Then $\Phi_{+}=\{ \alpha_1 < \alpha_1 + \alpha_2 < \alpha_2 \}$. The corresponding reduced expression of $w_0$ is $(1,2,1)$. It is known from {{\cite[Section 8.4]{KR}}} that $\mathcal{GL} = \{ (1) < (12) < (2) \}$. By Theorem~\ref{initpara}, the simple modules corresponding to the cluster variables of the seed $\mathcal{S}^{(1,2,1)}$ (together with its quiver) are given by 
   \smallskip
   $$ \xymatrix{ L(1) \ar[r] & L(12) \ar[r] & L(21) \ar@/_1pc/[ll] } . $$
 The matrix $\mathcal{M}_{\mathcal{S}^{(1,2,1)}}$ is 
  $$ \begin{pmatrix}
   1 & 0 & 1 \\
   0 & 1 & 0 \\
   0 & 0 & 1
  \end{pmatrix} .  $$
  \end{ex} 
   
   \section{Newton-Okounkov bodies for $\Cw$}
    \label{NObodyKLR}

It follows from Theorem~\ref{initpara} that for every choice of order $<$ on $I$ and every $w \in W$, there is a natural total ordering on the set of cluster variables of $\mathcal{S}^{{\bf w}_{<}}$ (and hence of every seed in $\Aqnw$). The corresponding lexicographic order on cluster monomials yields a monomial valuation for every seed  as in {{\cite[Section 7]{RW}}}. Thus it is natural to construct Newton-Okounkov bodies in this framework. It will turn out that in our setting the valuation will be naturally provided by parametrizations of simple objects in $R-gmod$ (or $\Cw$) and hence  entirely determined by $<$. In particular it does not depend on the choice of a seed. 
 Throughout the following sections, we consider a semisimple Lie algebra $\mathfrak{g}$ of arbitrary finite type. As in Section~\ref{sectionmainresult}, we fix an index set $I$ of the simple roots of $\mathfrak{g}$  and a total order $<$ on $I$. We also fix an element $w$ in the Weyl group $W$ of $\mathfrak{g}$ as well as the reduced expression ${\bf w}_{<} = (i_1, \ldots , i_N)$ corresponding to the restriction of $<$ to $\Phi_{+}^{w}$. In all Sections~\ref{NObodyKLR} and~\ref{sectionds}, we will use the following notations: $\C := \Cw, \A := K_0(\C)  \simeq \Aqnw, \M := \M_w, \G=\G_w$. 

 In order to construct Newton-Okounkov bodies for subalgebras of $\A$, we begin by  constructing a valuation with value in $\zn$ endowed with some total ordering, as well as a $\mathbb{N}$-graduation on $\A$.

   \subsection{Newton-Okounkov bodies}
    \label{NObodies}
    
    In this section we briefly review the general construction of Newton-Okounkov bodies, introduced by Kaveh-Khovanskii \cite{KKh} and independently by Lazarsfeld-Mustata \cite{LM}. It generalizes a construction of Okounkov \cite{Oko}. We refer to \cite{Bouck,BossPhD} for beautiful surveys on Newton-Okounkov bodies.
 
  Let $\A$ be a $\mathbb{N}$-graded commutative algebra over a base field ${\bf k}$. Let $\A_n$ denote the degree $n$ homogeneous subspace of $\A$ for any $n \in \mathbb{N}$. Thus one has
   $$ \A = \bigoplus_n {\A}_n   \quad , \quad {\A}_n {\A}_m \subset {\A}_{n+m} \quad  , \quad {\A}_0 = {\bf k}. $$
  We assume $\A_n$ to be a finite-dimensional ${\bf k}$-vector space for every $n \in \mathbb{N}$. We also assume that $\A$ is a domain and that the fraction field of $\A$ is of finite type over ${\bf k}$. 
  
  \begin{deftn}[Valuation] \label{defvalu}
   A valuation on $\A$ is a map $\Psi: \A \longrightarrow \zn$ (for some $N \geq 1$) satisfying the following properties:
    \begin{enumerate}
     \item $\forall f,g \in \A, \Psi(fg) = \Psi(f) + \Psi(g).$
     \item $\forall t \in {\bf k}^{*}, \forall f \in \A, \Psi(tf) = \Psi(f).$
     \item $\forall f,g \in \A, \Psi(f+g) \leq \max(\Psi(f),\Psi(g)).$
    \end{enumerate}
 \end{deftn}
 
 To any $\mathbb{N}$-graded subalgebra $\B$ of $\A$, one can associate a closed convex set $\Delta(\B)$ called Newton-Okounkov body of $\B$. 
 
  \begin{deftn}[Newton-Okounkov bodies]
   Let $\B$ be any graded subalgebra of $\A$. Decompose it as 
$$ \B = \bigoplus_n \B_n . $$   
    The Newton-Okounkov body associated to $\B$ is defined as 
   $$ \Delta(\B) := \overline{ \text{ConvexHull} \left( \bigcup_n  \frac{1}{n} \Psi(\B_n \setminus \{0\}) \right)}. $$
   \end{deftn}
   
  Recall that the vector spaces $\B_n , n \in \mathbb{N}$ are finite-dimensional. Hence it follows from {{\cite[Proposition 2.3]{KKh}}} that the sets $\Psi(\B_n \setminus \{0\})$ are finite. 
 In order to have these bodies satisfying nice properties, one needs to make a technical assumption on the valuation $\Psi$, namely that it is of maximal rational rank. This means that the rank of the value group of $\Psi$ has to be equal to the transcendence degree of $K := Frac(\A)$. We refer to {{\cite[Section 2.4]{Bouck}}} for more details and precise statements about maximal rational rank valuations. 
  The crucial observation is that under this assumption, one has 
  $$ \dim_{{\bf k}} \B_n =\sharp \Psi(\B_n \setminus \{0\}) .$$
  In other words if the valuation $\Psi$ is of maximal rational rank, then it has \textit{one-dimensional leaves} in the terminology of \cite{KKh} (see {{\cite[Proposition 2.23]{Bouck}}}). 
 
 \subsection{Total order on $\zn$} \label{totord}
 
As in Section~\ref{remindKK}, we write
 $$\Phi_{+}^w = \{ \beta_1 < \cdots < \beta_N \}$$ 
  and we let ${\bf i}_1 , \ldots , {\bf i}_N$ denote the corresponding good Lyndon words. Let ${\bf e}_k$ be the $k$th vector of the standard basis of $\zn$ for every $1 \leq k \leq N$. Recall from Section~\ref{remindCasbi} that the isomorphism $\varphi$ sends the good Lyndon word ${\bf i}_k$  onto ${\bf e}_k$. Equivalently, one has
 \begin{equation} \label{PsiLyndon}
  \forall 1 \leq k \leq N, \varphi \left( \tilde{\Psi} \left( [L({\bf i}_k)] \right) \right) = {\bf e}_k .
  \end{equation}
 Using the isomorphism $\varphi$, one can push forward the lexicographic order on $\M$ (resp. $\G$) onto the (reversed) lexicographic order on $\mathbb{N}^N$ (resp. $\zn$) given by:
 $$ (a_1, \ldots , a_N) < (b_1, \ldots , b_N) \Leftrightarrow  \exists k \geq 1, a_N=b_N, \ldots , a_{k+1}=b_{k+1} , a_k < b_k . $$
 
 \bigskip

  \subsection{The valuation} \label{valudef}
  
  We let $\Psi : Frac(K_0(R-gmod)) \longrightarrow \mathbb{Z}^{\Phi_{+}}$ denote the composition of $\tilde{\Psi}$ with $\varphi$:
 $$ \Psi : \xymatrixcolsep{4pc}\xymatrix{ Frac(K_0(R-gmod)) \ar[r]^{\qquad \qquad \tilde{\Psi}} & \G \ar[r]^{\varphi} & \mathbb{Z}^{\Phi_{+}} }. $$
  We again denote by $\Psi$ the restriction to $Frac(\A)$:
  $$ \Psi : \xymatrixcolsep{4pc}\xymatrix{ Frac(\A)  \ar[r]^{ \qquad \tilde{\Psi}} & \G \ar[r]^{\varphi} & \zn } . $$
  Recall that $N$ denotes the length of $w$. 
 
  \begin{lem} \label{valu}
   The map $\Psi$ is a valuation on $\A$ with value group $\zn$. 
   \end{lem}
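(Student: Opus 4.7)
The plan is to verify the three axioms of Definition~\ref{defvalu} directly, using crucially the multiplicativity of $\tilde{\Psi}$ on products of simples (Equation~\eqref{additivity}) and the fact that $\varphi$ is a monoid isomorphism (Proposition~\ref{monoid}, applied to the sub-monoid $\M_w$). First, I would extend $\tilde{\Psi}$ from classes of simples to arbitrary elements of $K_0(\Cw)$ by the leading-term prescription: writing $f \in \A$ in the basis of simples as $f = \sum_i c_i [L(\mu_i)]$ with all $c_i \neq 0$, set $\tilde{\Psi}(f) := \max_i \mu_i$ with respect to the lexicographic order on $\M_w$, and extend to $Frac(\A)$ via $\tilde{\Psi}(x/y) := \tilde{\Psi}(x) \odot \tilde{\Psi}(y)^{\odot -1}$ as recalled in Section~\ref{remindCasbi}.

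For multiplicativity, take $f = \sum_i c_i [L(\mu_i)]$ and $g = \sum_j d_j [L(\nu_j)]$ with leading terms $\mu_{i_0}, \nu_{j_0}$. Expanding $fg$ and decomposing each $[L(\mu_i)][L(\nu_j)]$ into Jordan--H\"older components, the maximal simple factor appearing is by definition of $\odot$ the class $[L(\mu_i \odot \nu_j)]$, all other contributions being strictly smaller. The compatibility of $\odot$ with the lexicographic order---a direct consequence of the explicit description of $\mu \odot \nu$ as the concatenation of good Lyndon factors in decreasing order proved in Section~\ref{sectionmainresult}---ensures that the global maximum over $(i,j)$ is attained at $(i_0, j_0)$ and equals $\mu_{i_0} \odot \nu_{j_0}$, with nonzero coefficient $c_{i_0} d_{j_0}$. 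Applying the group homomorphism $\varphi$ then yields $\Psi(fg) = \Psi(f) + \Psi(g)$.

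Axioms (2) and (3) are immediate: scaling by $t \in {\bf k}^*$ does not alter which coefficients are nonzero, hence preserves the leading word, and for $f+g$ the leading word appearing is at most the larger of the two leading words of $f$ and $g$ (with equality unless the top coefficients cancel). For the value group statement, Equation~\eqref{PsiLyndon} shows that each standard basis vector ${\bf e}_k$ lies in the image of $\Psi$ restricted to $\A$, so after passing to $Frac(\A)$ one obtains all of $\zn$.

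The only nontrivial input is the order-compatibility of $\odot$ on $\M_w$, which is precisely where the shuffle analysis of Section~\ref{sectionmainresult} (proving Proposition~\ref{monoid}) enters. Granted this, the lemma reduces to a mechanical leading-term argument, so I do not expect a further obstacle.
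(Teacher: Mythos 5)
Your proposal is correct and follows essentially the same route as the paper: a leading-term argument in the basis of classes of simples, using the order-compatibility of $\odot$ together with Proposition~\ref{monoid} to get additivity, with axioms (2) and (3) immediate and the value group obtained because $\Psi(\A)$ already contains $\mathbb{N}^N$ (the paper cites the bijectivity of $\varphi$, you cite Equation~\eqref{PsiLyndon}; these amount to the same thing). No gaps worth noting.
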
 

 \begin{proof}
Let $x=a_1[L(\mu_1)] + \cdots + a_r[L(\mu_r)]$ and $y=b_1[L(\nu_1)] + \cdots + b_s[L(\nu_s)]$ in $\A$; as $\A$ is commutative and $\M$ is totally ordered, one can assume $\mu_1> \cdots > \mu_r$ and $\nu_1> \cdots > \nu_s$.  Then one has
\begin{align*}
\Psi(x+y) &= \Psi(a_1[L(\mu_1)] + \cdots + a_r[L(\mu_r)] + b_1[L(\nu_1)] + \cdots + b_s[L(\nu_s)]) \\
 & \leq  \max \left( \max(  \boldsymbol{\mu}_i , 1 \leq i \leq r) , \max( \boldsymbol{\nu}_j , 1 \leq j \leq s) \right) \\
 &= \max(\Psi(x),\Psi(y)). 
\end{align*}
One also has:
$$ \Psi(xy) = \Psi \left( \sum_{i,j} a_i b_j [L(\mu_i)][L(\nu_j)] \right) . $$
For any $i \geq 2$ (resp. $j \geq 2$), $\mu_i < \mu_1$ (resp. $\nu_j < \nu1$) hence $ \mu_i \odot \nu_j < \mu_1 \odot \nu_1$ if $(i,j) \neq (1,1)$. Hence $\mu_1  \odot  \nu_1$ is the highest word appearing in the decomposition on simples of the above sum. 
Hence 
\begin{align*}
 \Psi(xy) &= \Psi \left( [L(\mu_1)] [L(\nu_1)] \right)) = \varphi (\mu_1 \odot \nu_1)  \\
  & = \varphi(\mu_1) + \varphi(\nu_1) \qquad  \text{by Proposition~\ref{monoid}} \\
  &= \Psi([L(\mu_1)]) + \Psi([L(\nu_1)]) = \Psi(x) + \Psi(y). 
\end{align*}
The remaining axiom in Definition~\ref{defvalu}  is straightforward. 
Finally, recall from Proposition~\ref{monoid} that $\varphi$ is a bijection from $\M$ to $\mathbb{N}^N$. In particular, $\Psi(\A) \supset \varphi(\M) = \mathbb{N}^N$ and thus the value group $\Psi(Frac(\A) \setminus \{ 0 \})$ is indeed the entire group $\zn$. 
\end{proof}

 \begin{lem}
 The valuation $\Psi$ is of maximal rational rank. In particular it is one-dimensional leaves in the sense of \cite{KKh}. 
 \end{lem}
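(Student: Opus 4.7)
The plan is to verify directly that the rank of the value group of $\Psi$ coincides with the transcendence degree of $K = \mathrm{Frac}(\A)$ over the base field, and then to appeal to the general criterion recalled in Section~\ref{NObodies} (following \cite{Bouck}) which says that a valuation of maximal rational rank automatically has one-dimensional leaves.

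First, I would record the rank of the value group. By Lemma~\ref{valu}, the value group of $\Psi$ is exactly $\mathbb{Z}^N$, so its rank is $N = l(w)$. The key point here is the surjectivity statement already proved, namely $\Psi(\A) \supset \varphi(\M) = \mathbb{N}^N$, which is a direct consequence of Proposition~\ref{monoid} together with the parametrization of simple objects in $\Cw$ by $\M_w$.

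Next, I would compute the transcendence degree of $K$ over the base field. Since $\A \simeq \Aqnw$ carries a cluster algebra structure of rank $N$, any seed (for instance $\mathcal{S}^{\mathbf{w}_{<}}$ produced in Section~\ref{sectionmainresult}) provides a collection of $N$ algebraically independent elements $x_1, \ldots, x_N$ whose Laurent expansions generate $\mathrm{Frac}(\A)$. Consequently $\mathrm{tr.deg}\, K = N$. Comparing with the previous paragraph gives $\mathrm{rk}\, \Psi(\A \setminus \{0\}) = N = \mathrm{tr.deg}\, K$, which is exactly the definition of $\Psi$ being of maximal rational rank.

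Finally, the one-dimensional leaves statement is then not something I have to prove from scratch: it is the general fact recalled in Section~\ref{NObodies} (and stated as \cite[Proposition~2.23]{Bouck}) that any valuation of maximal rational rank on a finitely generated domain has one-dimensional leaves, so it applies verbatim to $\Psi$. The only step that requires any care is the identification of the transcendence degree, but since algebraic independence of the cluster variables in a seed is standard for (quantum) cluster algebras, this should be essentially immediate. No real obstacle is expected; the lemma is a bookkeeping consequence of Lemma~\ref{valu} and the cluster structure on $\Aqnw$.
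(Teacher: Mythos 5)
Your argument is correct and is essentially the same as the paper's: the value group is $\mathbb{Z}^{N}$ by Lemma~\ref{valu}, the cluster structure on $\A \simeq \Aqnw$ makes $\mathrm{Frac}(\A) = \mathbb{Q}(x_1,\ldots,x_N)$ of transcendence degree $N$, and the one-dimensional leaves property then follows from the general criterion of \cite{Bouck,KKh}. Nothing further is needed.
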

 
 \begin{proof}
As $\A = K_0(\C)$ has a cluster algebra structure, its fraction field is just $\mathbb{Q}(x_1, \ldots , x_N)$ for any cluster $(x_1, \ldots , x_N)$. Thus it is of transcendence degree $N$. By construction, the rational rank of $\Psi$ is also equal to $N$. Thus $\Psi$ is of maximal rational rank. 
 \end{proof}

\begin{rk}
 In fact the valuation $\Psi$ essentially does the same thing as a monomial valuation: up to some automorphism of $\zn$, it can be identified with a valuation coming from the lexicographic order on cluster monomials as in {{\cite[Definition 7.1]{RW}}}. Representation theory provides us with a natural choice of total order on the cluster variables of the initial seed $\mathcal{S}^{\bf w}$: denoting by $\mu_i$ the dominant word such that $x_i = [L(\mu_i)]$, we set
  $$ x_i \lessdot x_j \Leftrightarrow \mu_i < \mu_j . $$
  This induces a total order on Laurent monomials in $x_1, \ldots , x_N$ as in {{\cite[Definition 7.1]{RW}}}. We also denote it $\lessdot$. Then using Corollary~\ref{ordrepara} one can show that for any Laurent monomials ${\bf x}^{\bf a} = \prod_i x_i^{a_i}$ and $ {\bf x}^{\bf b} = \prod_i x_i^{b_i}$ one has
$$ {\bf x}^{\bf b} \lessdot {\bf x}^{\bf b} \Leftrightarrow \Psi({\bf x}^{\bf b}) < \Psi({\bf x}^{\bf b}). $$
\end{rk}

 \bigskip

    \subsection{The grading on $\A$}
     \label{grading}
    
  The grading on $\A$ will essentially be given by the following standard notion of \textit{height} for elements of $Q_{+}$. For any $\beta \in Q_{+}$, write $\beta = \sum_i b_i \alpha_i$. The quantity  
    $$ \hgt(\beta) := \sum_i b_i $$
    is called the \textit{height} of $\beta$. 
   For any $\beta, \gamma \in Q_{+}$, one has $\hgt(\beta + \gamma) = \hgt(\beta) + \hgt(\gamma)$. 
   
\begin{lem} \label{petitlem}
For any word $\nu$ in $\mathcal{M}$, the number of letters of $\nu$  is equal to $\hgt(\wt(\nu))$. 
\end{lem}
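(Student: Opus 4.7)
The plan is to simply unfold the definitions of $\wt$ and $\hgt$ given in Section~\ref{remindKLR} and in the paragraph preceding the lemma, and observe that the resulting double count of the letters of $\nu$ collapses to the length of $\nu$.

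More precisely, writing $\nu = h_1 \cdots h_r$ with $h_k \in I$, the definition of the weight gives
\[ \wt(\nu) = \sum_{i \in I} \sharp\{k \mid h_k = i\} \, \alpha_i, \]
so by definition of the height,
\[ \hgt(\wt(\nu)) = \sum_{i \in I} \sharp\{k \mid h_k = i\}. \]
Since each index $k \in \{1, \ldots, r\}$ satisfies $h_k = i$ for exactly one $i \in I$, the sets $\{k \mid h_k = i\}$ for $i \in I$ partition $\{1, \ldots, r\}$. Hence the last sum equals $r$, which is the number of letters of $\nu$.

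There is no genuine obstacle here; the statement is a purely combinatorial tautology about the compatibility between counting letters in a word and summing coefficients of the associated element of $Q_{+}$. The only point worth stressing in the writeup is the partition argument, to make explicit that each position contributes to exactly one of the cardinalities $\sharp\{k \mid h_k = i\}$.
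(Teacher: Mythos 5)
Your argument is correct and is exactly the definition-unfolding the paper implicitly relies on: the paper states this lemma without proof, as it follows immediately from the definitions of $\wt$ and $\hgt$ in Section~\ref{remindKLR}. Your partition observation is the whole content, so nothing is missing.
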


Therefore we will denote it using the usual notation for the length of a word namely $|\nu|$. 

\begin{rk}
 In particular, for $\beta \in Q_{+}$, $M$ a simple object in $R(\beta)-gmod$ and $\mu$ the corresponding dominant word, one has $|\mu| = \hgt(\beta)$.
Consider for instance the good Lyndon words ${\bf i}_1 , \ldots , {\bf i}_N$. Then for any $1 \leq k \leq N$, one has 
 $$|{\bf i}_k| = \hgt(\beta_k) . $$
  \end{rk}
  
 Note that  $|\mu \odot \nu| = |\mu| + |\nu|$ for any $\mu, \nu \in \M$. Hence the following definition makes $\A$ into a graded algebra. 
 
     \begin{deftn}[Grading on $\A$] \label{grad}
    To any simple object $M$ in $\C$, we associate the length of the corresponding dominant word, i.e. the integer $|\tilde{\Psi}([M])|$. 
     \end{deftn}
     
   As $\C$ is a monoidal categorification of $\A$, every cluster monomial is a simple object and thus is homogeneous, its degree being the length of the corresponding dominant word. 
   
   \bigskip
    
      \subsection{Newton-Okounkov bodies for $\Cw$}
    \label{NOCw}
    
     We are now ready to construct Newton-Okounkov bodies using the above grading and valuation. For any graded subalgebra $\B$ of $\A = K_0(\C)$, we get a convex compact set $\Delta(\B) \subset \mathbb{R}^N$. Moreover the bodies $\Delta(\B) (\B \subset \A)$ are always contained in $\Delta(\A)$. 
     
    Let us begin with the following statement, that will be useful in the following.
 
 \begin{lem} \label{trivlem}
 Assume $\B$ is a graded finitely generated subalgebra of $\A$. Consider homogeneous generators $b_1, \ldots , b_r$ of $\B$ and set $d_i := \deg b_i$ for every $1 \leq i \leq r$. Assume furthermore that the family $(\Psi(b_1) , \ldots , \Psi(b_r))$ is linearly independent. Then the Newton-Okounkov body $\Delta(\B)$ is the rational  polytope given by 
  $$\Delta(\B) = \text{ConvexHull}(\frac{1}{d_i} \Psi(b_i) , 1 \leq i \leq r) . $$
 \end{lem}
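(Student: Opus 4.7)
The plan is to establish the equality $\Delta(\B) = \mathrm{ConvexHull}\bigl(\tfrac{1}{d_i}\Psi(b_i), 1 \leq i \leq r\bigr)$ by proving both inclusions separately; the rationality and polytope property will then be immediate since the right-hand side is by construction the convex hull of finitely many rational points (each $\Psi(b_i) \in \mathbb{Z}^N$ and $d_i \in \mathbb{N}_{>0}$).

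First I would dispatch the easy inclusion $\supseteq$: each $b_i$ lies in $\B_{d_i} \setminus \{0\}$, so $\tfrac{1}{d_i}\Psi(b_i) \in \tfrac{1}{d_i}\Psi(\B_{d_i}\setminus\{0\}) \subseteq \Delta(\B)$. Since $\Delta(\B)$ is closed and convex, it contains the convex hull of these finitely many points.

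The substantive step is the reverse inclusion. Any nonzero element $x \in \B_n$ can be written as a linear combination $x = \sum_{a} c_a\, b^a$, where $b^a := b_1^{a_1}\cdots b_r^{a_r}$ and the sum is indexed by tuples $a=(a_1,\ldots,a_r) \in \mathbb{N}^r$ satisfying the homogeneity condition $\sum_i a_i d_i = n$. Because $\Psi$ is a valuation, $\Psi(b^a) = \sum_i a_i \Psi(b_i)$. The linear independence of the family $(\Psi(b_1), \ldots, \Psi(b_r))$ ensures that distinct tuples $a \neq a'$ yield distinct valuations $\sum a_i \Psi(b_i) \neq \sum a'_i \Psi(b_i)$. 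Combined with the standard fact that $\Psi(f+g) = \max(\Psi(f),\Psi(g))$ whenever $\Psi(f) \neq \Psi(g)$ (which is a direct consequence of the axioms in Definition~\ref{defvalu}), this yields $\Psi(x) = \sum_i a^*_i \Psi(b_i)$ for some index $a^*$ with $c_{a^*} \neq 0$ and $\sum_i a^*_i d_i = n$. Consequently,
$$ \tfrac{1}{n}\Psi(x) \;=\; \sum_{i=1}^r \tfrac{a^*_i d_i}{n}\cdot\tfrac{1}{d_i}\Psi(b_i), $$
which exhibits $\tfrac{1}{n}\Psi(x)$ as a convex combination of the vertices $\tfrac{1}{d_i}\Psi(b_i)$, since the coefficients are nonnegative and sum to $1$. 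This proves $\tfrac{1}{n}\Psi(\B_n\setminus\{0\}) \subseteq \mathrm{ConvexHull}\bigl(\tfrac{1}{d_i}\Psi(b_i), 1 \leq i \leq r\bigr)$ for every $n$, and the same inclusion then passes to the union over $n$, to its convex hull, and to its closure.

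The main (minor) obstacle is to ensure that the leading $\Psi$-term of $x$ is not lost through cancellation when taking the valuation of a sum — and this is precisely what the linear independence hypothesis controls, by forcing all monomials $b^a$ appearing in $x$ to have pairwise distinct valuations. Everything else is formal: the vertices $\tfrac{1}{d_i}\Psi(b_i)$ lie in $\mathbb{Q}^N$, so their convex hull is a rational polytope, and the closure in the definition of $\Delta(\B)$ is superfluous here because a convex hull of finitely many points is already closed.
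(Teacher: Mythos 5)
Your proof is correct and follows essentially the same route as the paper: write a homogeneous element as a combination of monomials in the $b_i$, use the linear independence of the $\Psi(b_i)$ to see that distinct monomials have distinct valuations so the maximal one cannot cancel, and conclude that $\frac{1}{n}\Psi(x)$ is a convex combination of the points $\frac{1}{d_i}\Psi(b_i)$. The only (cosmetic) difference is that you invoke the general fact $\Psi(f+g)=\max(\Psi(f),\Psi(g))$ when $\Psi(f)\neq\Psi(g)$, whereas the paper checks non-cancellation directly on the decomposition into classes of simple objects; both are valid.
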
 
 
\begin{proof}
Let $f$ be any degree $d$ homogeneous element in $\B$.  We prove that 
$$\frac{1}{d} \Psi(f) \in \text{ConvexHull}(\frac{1}{d_i} \Psi(b_i) , 1 \leq i \leq r) . $$ 
Write $f$ as 
$$ f = \sum_{\substack{(i_1, \ldots , i_r) \\ d_1 i_1 + \cdots + d_r i_r = d}} a_{i_{1}, \ldots, i_{r}} b_1^{i_1} \cdots b_r^{i_r} $$
and decompose each monomial  $b_1^{i_1} \cdots b_r^{i_r} \in \B \subset \A$ on the basis of classes of simple objects in $\C$. As the vectors $\Psi(b_1) , \ldots , \Psi(b_r)$ are linearly independent, one has $\Psi(b_1^{i_1} \cdots b_r^{i_r}) \neq \Psi(b_1^{j_1} \cdots b_r^{j_r})$ if $(i_1, \ldots , i_r) \neq (j_1, \ldots , j_r)$. Thus there is a unique maximal element $\bmu$ among the images $\Psi(b_1^{i_1} \cdots b_r^{i_r})$. This element is then the unique maximal element in the decomposition of $f$ on the basis of classes of simple objects in $\C$. Hence by definition of $\Psi$, one has $\Psi(f) = \bmu =  \Psi(b_1^{i_1} \cdots b_r^{i_r})$ for some  $(i_1, \ldots , i_r) , d_1 i_1 + \cdots + d_r i_r = d$. 
In particular, 
$$ \frac{1}{d} \Psi(f) = \frac{1}{d_1 i_1 + \cdots d_r i_r} (i_1 \Psi(b_1) + \cdots + i_r \Psi(b_r)) \in \text{ConvexHull}(\frac{1}{d_i} \Psi(b_i) , 1 \leq i \leq r) . $$
\end{proof}
  
  Recall that ${\bf e}_k$ stands for the vectors of the standard basis of $\zn$ (see Section~\ref{totord}).
     
      \begin{lem} \label{Deltatot}
      The Newton-Okounkov body $\Delta(\A)$ is given by:
$$ \Delta(\A) = \text{ConvexHull}(\frac{1}{\hgt(\beta_k)} {\bf e}_k , 1 \leq k \leq N). $$
      \end{lem}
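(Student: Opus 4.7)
The plan is to prove the two inclusions separately and then observe that the convex hull of finitely many points in $\mathbb{R}^N$ is already closed, so the closure in the definition of $\Delta(\A)$ adds nothing. The key ingredients are the explicit description of $\Psi$ on simples via the monoid isomorphism $\varphi$ of Proposition~\ref{monoid}, together with the fact that $|\mu| = \hgt(\wt(\mu))$ for any dominant word $\mu$ (Lemma~\ref{petitlem}).

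First, the easy inclusion $\supseteq$. For each $k$, the cuspidal representation $L({\bf i}_k)$ is a simple object in $\C$ satisfying $\Psi([L({\bf i}_k)]) = {\bf e}_k$ by Equation~\eqref{PsiLyndon}, while its degree (Definition~\ref{grad}) equals $|{\bf i}_k| = \hgt(\beta_k)$. Hence $\frac{1}{\hgt(\beta_k)} {\bf e}_k = \frac{1}{|[L({\bf i}_k)]|} \Psi([L({\bf i}_k)])$ lies in $\bigcup_n \frac{1}{n} \Psi(\A_n \setminus \{0\})$, so its convex hull is contained in $\Delta(\A)$.

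For the reverse inclusion, take any nonzero homogeneous element $f \in \A_n$ and decompose it as $f = \sum_i a_i [L(\mu_i)]$ with $|\mu_i| = n$ for all $i$ (by homogeneity). Since $\M$ is totally ordered, there is a unique maximal $\mu_\star$ among the $\mu_i$ with $a_i \neq 0$, and by definition of $\Psi$ one has $\Psi(f) = \varphi(\mu_\star)$. Writing the canonical factorization $\mu_\star = ({\bf i}_N)^{c_N} \cdots ({\bf i}_1)^{c_1}$, we get $\varphi(\mu_\star) = \sum_k c_k {\bf e}_k$ and
\begin{equation*}
n = |\mu_\star| = \sum_{k=1}^N c_k |{\bf i}_k| = \sum_{k=1}^N c_k \hgt(\beta_k),
\end{equation*}
so that
\begin{equation*}
\frac{1}{n} \Psi(f) = \sum_{k=1}^N \frac{c_k \hgt(\beta_k)}{\sum_j c_j \hgt(\beta_j)} \cdot \frac{1}{\hgt(\beta_k)} {\bf e}_k
\end{equation*}
is a convex combination of the $\frac{1}{\hgt(\beta_k)} {\bf e}_k$. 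This shows $\frac{1}{n} \Psi(\A_n \setminus \{0\}) \subseteq \text{ConvexHull}(\frac{1}{\hgt(\beta_k)} {\bf e}_k, 1 \leq k \leq N)$ for every $n$.

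Finally, the convex hull of a finite subset of $\mathbb{R}^N$ is compact, hence closed, so taking the closure in the definition of $\Delta(\A)$ has no effect and the two inclusions combine to give the claimed equality. No step presents any real obstacle here: the content of the lemma is essentially the translation of the monoid isomorphism of Proposition~\ref{monoid} and the additivity of the height into the Newton--Okounkov language, and the only mild subtlety is remembering that $\Psi$ of a sum is computed as $\varphi$ of the lexicographically largest dominant word in the decomposition.
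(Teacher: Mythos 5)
Your proof is correct, and its core computation is the same as the paper's: write the canonical factorization $\mu = ({\bf i}_N)^{c_N}\cdots({\bf i}_1)^{c_1}$, use $|\mu| = \sum_k c_k \hgt(\beta_k)$, and read off that the normalized valuation is a convex combination of the points $\frac{1}{\hgt(\beta_k)}{\bf e}_k$. The organization differs slightly: the paper first carries out this computation for cluster variables, then extends to arbitrary monomials in cluster variables by convexity, and finally appeals to the argument of Lemma~\ref{trivlem} to say that the valuation of a general element of $\A$ agrees with that of such a monomial; you instead decompose an arbitrary homogeneous element directly on the basis of classes of simple objects and evaluate $\Psi$ as $\varphi$ of the lexicographically largest dominant word occurring. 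Your route is a bit more economical, since it never needs cluster monomials and works uniformly for every simple class in $\Cw$, and you also make explicit two points the paper leaves implicit: the reverse inclusion (the vertices are attained by the cuspidal classes $[L({\bf i}_k)]$, via Equation~\eqref{PsiLyndon}) and the fact that the closure in the definition of $\Delta(\A)$ is redundant because the convex hull of finitely many points is already compact. Both arguments rest on the same ingredients, namely Proposition~\ref{monoid}, Lemma~\ref{petitlem}, and the description of the simples of $\Cw$ by the words $({\bf i}_N)^{c_N}\cdots({\bf i}_1)^{c_1}$.
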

      
       \begin{proof}   
       Let $x$ be any cluster variable in $\A$ and $\mu \in \M$ the dominant word such that $x=[L(\mu)]$. We write the canonical factorization of $\mu$ as $\mu = {\bf i}_1^{a_1} \cdots {\bf i}_N^{a_N}$ (see Theorem~\ref{thmKLR} and Remark~\ref{canofacto}). 
       Then by definition $|\mu| = \sum_k a_k \hgt(\beta_k)$. Hence 
       $$ \frac{1}{|\mu|} \Psi(x) = \frac{1}{\sum_k a_k \hgt(\beta_k)} \sum_k a_k {\bf e}_k $$
       which implies
 $$ \frac{1}{|x|} \Psi(x) \in \text{ConvexHull}(\frac{1}{\hgt(\beta_k)} {\bf e}_k , 1 \leq k \leq N). $$ 
    This holds for any cluster variable in $\A$. 
 Now let $\mathfrak{m}=x_1^{a_1} \dots x_r^{a_r}$ be any monomial in the cluster variables (here the $x_i$ are \textit{any} cluster variables, not necessarily of the same cluster) and let $d_i := |x_i|, 1 \leq i \leq r$. One has 
    $$\frac{1}{|\mathfrak{m}|} \Psi(\mathfrak{m}) = \frac{1}{\sum_i a_i d_i} \sum_i a_i \Psi(x_i) = \frac{1}{\sum_i a_i d_i} \sum_i a_i d_i \frac{\Psi(x_i)}{d_i}.$$
  Thus  $\frac{1}{|\mathfrak{m}|} \Psi(\mathfrak{m})$ lies in $\text{ConvexHull}(\frac{1}{\hgt(\beta_k)} {\bf e}_k , 1 \leq k \leq N)$. 
  
 As the valuation of any element $f$ of $\A$ is always equal to the valuation of some monomial as above (see the proof of Lemma~\ref{trivlem}), this proves the desired statement. 
 
       \end{proof}
       
    Note in particular that $\Delta(\A)$ is a simplex of full dimension $N-1$.  
    
    \begin{rk}
    If one chooses a different ordering on $\Phi_{+}^{w}$ (or equivalently a different reduced expression of $w$), then the Newton-Okounkov body $\Delta(\Aqnw)$ will be the same up to some affine isomorphism (whose linear part is given by a permutation of the vectors of the standard basis of $\mathbb{Z}^N$).
     \end{rk}
  
\section{The simplices $\ds$}
 \label{sectionds}

Throughout Sections~\ref{sectionds} and~\ref{hooksection}, we will assume $\mathfrak{g}$ is simply-laced. We will be studying the following Newton-Okounkov bodies: for any seed $\s$ in $\A$ which we write $((x_1, \ldots , x_N),B)$, we consider the graded subalgebra of $\A$ generated  by the cluster variables of $\s$. This is a finitely generated algebra and by Corollary~\ref{invert}, the images of $x_1, \ldots , x_N$ under the valuation $\Psi$ are linearly independent. Hence by Lemma~\ref{trivlem} the corresponding Newton-Okounkov body $\ds$ is the simplex given by 
$$ \ds = \text{ConvexHull}(\frac{1}{|x_i|} \Psi(x_i) , 1 \leq i \leq N) . $$
We begin by outlining the fact that for any seed $\s$, the simplex $\ds$ (as well as $\Delta(\A)$) sits inside an affine hyperplane. This hyperplane is naturally defined from the representation-theoretic data introduced in Section~\ref{background}. Then we use Theorem~\ref{initpara} to prove several properties of the simplices $\ds$. In particular, we exhibit a correspondence between the rational points in $\ds$ and the cluster monomials for the seed $\s$. 

 \subsection{The hyperplane $\hyp$}
 \label{hyperplane}

Recall that  $(e_k , 1 \leq k \leq N)$ stands for the standard basis of $\zn$. We also let $\langle \cdot , \cdot \rangle$ denote the standard Euclidian scalar product on $\mathbb{R}^N$. We let $\boldsymbol{\lambda}$ denote the vector whose $k$th component is the height of the positive root $\beta_k$:
$$ \langle \boldsymbol{\lambda}, {\bf e}_k \rangle := \hgt(\beta_k) $$
for every $k \in \{ 1 , \ldots , N \}$. 
The following Lemma shows that $\boldsymbol{\lambda}$ encodes the grading on $\A$ (see Section~\ref{grading}). 
  
 \begin{lem}
Let $M$ be a simple module in $\C$ and let $\mu \in \M$ the corresponding dominant word. Then one has
 $$ |\mu| =  \langle \boldsymbol{\lambda}, \Psi([M]) \rangle . $$
 \end{lem}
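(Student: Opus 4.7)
The plan is a direct computation unpacking the definitions of $|\mu|$, $\Psi$, and $\boldsymbol{\lambda}$ on the canonical factorization of $\mu$.

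First I would write the canonical factorization $\mu = ({\bf i}_N)^{c_N} \cdots ({\bf i}_1)^{c_1}$ of $\mu$ as given by Theorem~\ref{thmKLR} (see also Remark~\ref{canofacto}). Then by the definition of the monoid isomorphism $\varphi$ from Section~\ref{remindCasbi}, we have $\varphi(\mu) = (c_1, \ldots, c_N)$. Since $\Psi = \varphi \circ \tilde{\Psi}$ and $\tilde{\Psi}([M]) = \mu$, this gives
$$\Psi([M]) = \sum_{k=1}^{N} c_k {\bf e}_k.$$

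Next, I would compute $|\mu|$ by counting letters. Since $\mu$ is the concatenation of the good Lyndon words ${\bf i}_k$ (each appearing $c_k$ times), one has $|\mu| = \sum_k c_k |{\bf i}_k|$. By the remark following Lemma~\ref{petitlem}, $|{\bf i}_k| = \hgt(\beta_k)$, so
$$|\mu| = \sum_{k=1}^{N} c_k \hgt(\beta_k).$$

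Finally, by definition of $\boldsymbol{\lambda}$ we have $\langle \boldsymbol{\lambda}, {\bf e}_k \rangle = \hgt(\beta_k)$, so bilinearity of the scalar product yields
$$\langle \boldsymbol{\lambda}, \Psi([M]) \rangle = \sum_{k=1}^{N} c_k \langle \boldsymbol{\lambda}, {\bf e}_k \rangle = \sum_{k=1}^{N} c_k \hgt(\beta_k) = |\mu|,$$
which is the desired equality. There is no genuine obstacle here; the only subtlety is making sure that $\Psi([M])$ is read off from the exponents in the canonical factorization in the same basis against which $\boldsymbol{\lambda}$ is defined, which is guaranteed by construction of $\varphi$ in Section~\ref{remindCasbi} and of $\boldsymbol{\lambda}$ above.
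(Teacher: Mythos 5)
Your proof is correct and follows essentially the same route as the paper: both unpack the canonical factorization $\mu = ({\bf i}_N)^{c_N}\cdots({\bf i}_1)^{c_1}$, read off $\Psi([M]) = \sum_k c_k {\bf e}_k$, and reduce $|\mu|$ to $\sum_k c_k \hgt(\beta_k)$ via Lemma~\ref{petitlem} (the paper passes through $\hgt(\wt(\mu))$ while you count letters via $|{\bf i}_k| = \hgt(\beta_k)$, which is the same content). No gaps.
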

 
 \begin{proof}
  Let us write the canonical factorization of $\mu$ as $\mu = {\bf i}_N^{c_N} \cdots {\bf i}_1^{c_1}$ (see Remark~\ref{canofacto}). Then by definition one has $\Psi(\mu) = ~t(c_1, \ldots , c_N)$. Then using Lemma~\ref{petitlem} we get
$$ |\mu| =  \hgt(\wt(\mu)) =  \hgt(\sum_k c_k \beta_k) = \sum_k c_k \hgt(\beta_k) = \sum_k c_k \langle \boldsymbol{\lambda}, {\bf e}_k \rangle 
 = \langle \boldsymbol{\lambda}, \Psi([L(\mu)]) \rangle . $$
   \end{proof}
   
 Let $\hyp$ denote the affine hyperplane $\{ \langle \boldsymbol{\lambda}, \cdot \rangle = 1 \} \subset \mathbb{R}^N$.
 The following observation is a straightforward consequence of Definition~\ref{grad} and Lemma~\ref{Deltatot}.
 
 \begin{lem}
The simplex $\Delta(\A)$ is contained in $\hyp$. 
\end{lem}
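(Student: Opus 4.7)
The plan is to verify the hyperplane condition on the vertices of the simplex $\Delta(\A)$ and then invoke convexity. By Lemma~\ref{Deltatot}, $\Delta(\A)$ is the convex hull of the points $\frac{1}{\hgt(\beta_k)} {\bf e}_k$ for $1 \leq k \leq N$. Since $\hyp$ is an affine hyperplane, hence convex, it suffices to show that each of these $N$ vertices lies in $\hyp$.

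The computation is immediate from the definition of $\boldsymbol{\lambda}$: for each $k$,
\[
\left\langle \boldsymbol{\lambda},  \frac{1}{\hgt(\beta_k)} {\bf e}_k \right\rangle = \frac{1}{\hgt(\beta_k)} \langle \boldsymbol{\lambda}, {\bf e}_k \rangle = \frac{\hgt(\beta_k)}{\hgt(\beta_k)} = 1,
\]
so $\frac{1}{\hgt(\beta_k)} {\bf e}_k \in \hyp$. Taking the convex hull and then the closure (both operations preserve the hyperplane condition since $\hyp$ is closed and affine) gives $\Delta(\A) \subset \hyp$.

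There is no real obstacle here; this statement is essentially a consistency check between the grading of Section~\ref{grading} (encoded by $\boldsymbol{\lambda}$ via the previous lemma) and the explicit description of $\Delta(\A)$ given in Lemma~\ref{Deltatot}. One could equivalently derive it directly from the previous lemma: for any simple $M$ with dominant word $\mu$, the rescaled point $\frac{1}{|\mu|}\Psi([M])$ satisfies $\langle \boldsymbol{\lambda}, \frac{1}{|\mu|}\Psi([M]) \rangle = \frac{|\mu|}{|\mu|} = 1$, and the same holds for arbitrary homogeneous elements of $\A$ by the argument used in the proof of Lemma~\ref{Deltatot}, so every point of the form $\frac{1}{n}\Psi(f)$ with $f \in \A_n \setminus \{0\}$ lies in $\hyp$; again, closure and convex hull preserve this property.
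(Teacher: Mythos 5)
Your proof is correct and matches the paper's intent exactly: the paper gives no detailed argument, stating only that the lemma is a straightforward consequence of the grading (encoded by $\boldsymbol{\lambda}$) and Lemma~\ref{Deltatot}, and your verification that each vertex $\frac{1}{\hgt(\beta_k)}{\bf e}_k$ satisfies $\langle \boldsymbol{\lambda}, \cdot \rangle = 1$ followed by convexity and closedness of $\hyp$ is precisely that straightforward consequence spelled out.
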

 
  As a consequence, any Newton-Okounkov body associated to a graded subalgebra of $\A$ will also lie in $\hyp$. 
  
  \bigskip

\subsection{First properties of $\ds$}
\label{simplices}

Now we state a couple of algebraic and geometric properties of the simplices $\ds$. We use Theorem~\ref{initpara} as well as a result of Geiss-Leclerc-Schr\"oer ({{\cite[Theorem 8.3]{GLSfactorial}}}). 

 First we exhibit a correspondence between the rational points inside $\ds$ and the monoidal cluster monomials for this seed. By monoidal cluster monomial, we mean an object in $\C$ isomorphic to $ \bigodot_i M_i^{d_i}$ for some nonnegative integers $d_i$, where $M_i$ are the simple modules corresponding to the cluster variables of $\s$. The monoidal categorification statements of \cite{KKKO} imply that monoidal cluster monomials are always real simple objects. 
 
  \begin{prop} \label{propds}
   Let $\s$ be a seed in $\A$ with cluster variables $x_1, \ldots , x_N$. Then for any simple object $M$ in $\C$, one has 
   $$ \text{$M$ is a monoidal cluster monomial for $\s$} \enspace  \Leftrightarrow  \enspace \frac{1}{|M|} \Psi([M]) \in \ds . $$
   Moreover, any rational point in $\ds$ is of the form $ \frac{1}{|M|} \Psi([M])$ for some monoidal cluster monomial $M$ in $\C$. 
   \end{prop}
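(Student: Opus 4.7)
The proposition has three content items: the forward implication, the backward implication, and the rational-point surjectivity. I would treat them as three self-contained steps, all built on the description
\[
\ds = \mathrm{ConvexHull}\!\left(\tfrac{1}{|x_i|}\Psi(x_i),\ 1 \leq i \leq N\right)
\]
together with Corollary~\ref{invert} and the basic properties of $\Psi$.

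\textbf{Forward implication.} Suppose $M$ is a monoidal cluster monomial for $\s$, say $[M] = \prod_{i} x_i^{d_i}$ with $d_i \in \mathbb{N}$. Using Lemma~\ref{valu} (multiplicativity of $\Psi$) and additivity of $|\cdot|$ (Section~\ref{grading}), one has $\Psi([M]) = \sum_i d_i\,\Psi(x_i)$ and $|M| = \sum_i d_i\,|x_i|$. Then
\[
\frac{1}{|M|}\Psi([M]) = \sum_i \frac{d_i|x_i|}{|M|}\cdot\frac{\Psi(x_i)}{|x_i|},
\]
which is a convex combination of the vertices of $\ds$ since the coefficients are nonnegative and sum to $1$.

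\textbf{Backward implication.} Suppose $M$ is a simple object with $\frac{1}{|M|}\Psi([M]) \in \ds$. Writing this point as a convex combination of the vertices of $\ds$, one gets $\Psi([M]) = \sum_i d_i\,\Psi(x_i)$ for some nonnegative rationals $d_i$. The main obstacle — and the only nontrivial point of the proof — is to show that $d_i \in \mathbb{Z}$. For this, I would invoke Corollary~\ref{invert}: the matrix $\mathcal{M}_{\s}$ lies in $GL_N(\mathbb{Z})$, so the vectors $\Psi(x_1),\ldots,\Psi(x_N)$ form a $\mathbb{Z}$-basis of $\zn$. Since $\Psi([M]) \in \zn$, the coefficients $d_i$ of its unique expansion in this basis must be integers, hence nonnegative integers. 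Then $N := \prod_i x_i^{d_i}$ is a monoidal cluster monomial, in particular (by \cite{KKKO}) a simple module, satisfying $\Psi([N]) = \Psi([M])$. Since $\Psi = \varphi \circ \tilde\Psi$ with $\varphi$ a bijection (Proposition~\ref{monoid}) and $\tilde\Psi$ a bijection from simple classes to $\M$ (Section~\ref{remindCasbi}), the restriction of $\Psi$ to classes of simples is injective. We conclude $M \cong N$, so $M$ is a monoidal cluster monomial for $\s$.

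\textbf{Rational points.} Let $p \in \ds \cap \mathbb{Q}^N$. The barycentric coordinates of $p$ with respect to the affinely independent vertices $\tfrac{1}{|x_i|}\Psi(x_i)$ are uniquely determined and, since $p$ is rational and the vertices are rational, are themselves nonnegative rationals summing to $1$. Write them as $\lambda_i = b_i/B$ with $b_i \in \mathbb{N}$ and $B := \sum_i b_i$. Set $d_i := b_i \prod_{k \neq i}|x_k| \in \mathbb{N}$ and let $M$ denote the (simple) monoidal cluster monomial $\prod_i M_i^{\odot d_i}$, where $M_i$ is the simple module such that $[M_i] = x_i$. A direct computation, parallel to the one in the forward implication, gives $\frac{1}{|M|}\Psi([M]) = \sum_i \lambda_i\,\frac{\Psi(x_i)}{|x_i|} = p$. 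This expresses $p$ as desired, completing the proof.
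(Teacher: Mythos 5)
Your proposal is correct and follows essentially the same route as the paper: the forward direction by direct computation with the valuation and the grading, the backward direction by using Corollary~\ref{invert} to see that $\Psi(x_1),\ldots,\Psi(x_N)$ form a $\mathbb{Z}$-basis of $\zn$ forcing integrality of the coefficients, and the identification of $M$ via simplicity of monoidal cluster monomials and injectivity of $\Psi$ on classes of simples. Your treatment of the rational-point statement (explicitly clearing denominators with $d_i = b_i\prod_{k\neq i}|x_k|$) is just a more explicit version of the paper's ``similar arguments'' step, so no substantive difference.
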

   
   \begin{proof}
    Let $\mu \in \M$ such that $M \simeq L(\mu)$. We set $d:= |\mu|$ and  $\bmu := \Psi([M]) \in \zn$. Fix a seed $\s := ((x_1 , \ldots , x_N),B)$, and let $\mu_i$ denote  the dominant word such that $x_i = [L(\mu_i)]$. We also set $ d_i := |\mu_i| , \bmu_i := \Psi(x_i) \in \zn$. 
    
   The if part is obvious: if $[L(\mu)]=x_1^{a_1} \cdots x_N^{a_N}$, then in $\M$ one has $\mu = \bigodot_i {\mu_i}^{\odot a_i}$. Hence $ \bmu = \sum_i a_i \bmu_i$ and $ d= \sum_i a_i d_i$. This implies:
    $$\frac{1}{d} \bmu =  \frac{1}{ \sum_i a_i d_i} \sum_i a_i d_i \frac{1}{d_i} \bmu_i \in \ds.$$
    For the only if part, let us write 
    $ \frac{1}{d} \bmu = \sum_i t_i \frac{1}{d_i} \bmu_i $
    with $t_i \geq 0$ for every $i$ and $\sum_i t_i = 1$. Setting $a_i := d t_i / d_i$ for every $1 \leq i \leq N$, this can be rewritten as $ \bmu = \sum_i a_i \bmu_i$ or equivalently  $\bmu = \mathcal{M}_{\s}  ~^t (a_1, \ldots, a_N)$. Now $\bmu \in \zn$ and  $\mathcal{M}_{\s} \in GL_N(\mathbb{Z})$ by Corollary~\ref{invert}. Hence $~^t (a_1, \ldots, a_N) \in \zn$ i.e. $\forall i, a_i \in \mathbb{Z}$ (and hence $a_i \in \mathbb{N}$ as the $a_i$ are positive). This implies 
    $$ \mu = \bigodot_{i} \mu_i^{\odot a_i} $$
    in $\M$ and hence $[M] = \prod_i [L(\mu_i)]^{a_i} = \prod_i x_i^{a_i}$ i.e. $[M]$ is a cluster monomial for the seed $\s$. 
  
   Now let $\boldsymbol{\nu} \in \mathbb{Q}^N \cap \ds$ and write as before $\boldsymbol{\nu} = \sum_i t_i \frac{1}{d_i} \bmu_i $ with $t_i \geq 0$ and $\sum_i t_i = 1$. Then using similar arguments, one shows that there exists a non-negative integer $l$ such that $  \boldsymbol{\mu} := l \boldsymbol{\nu} \in \mathbb{N}^N$ and $ M := L(\mu)$ is a monoidal cluster monomial for the seed $\s$. Now $|M| = l \times \sum_i t_i = l $ and thus one has $ \nu = \frac{1}{l} \boldsymbol{\nu} = \frac{1}{d} \bmu. $
    
   \end{proof}
   
 \begin{cor} \label{interior}
 Let $\s$ and ${\s}'$ be two seeds having different sets of cluster variables. Then the simplices $\ds$ and $\Delta_{\mathcal{S}'}$ have disjoint interiors. 
 \end{cor}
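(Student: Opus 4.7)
The plan is to argue by contradiction: assuming the relative interiors of $\ds$ and $\Delta_{\s'}$ meet, I will use Proposition~\ref{propds} to produce simple cluster monomials from the two seeds whose classes in $\A$ are positively proportional, and then contradict the hypothesis via the factoriality of $\Aqnw$.

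First I would observe that both simplices lie in the affine hyperplane $\hyp$, in which the rational points form a dense subset since $\boldsymbol{\lambda}$ has integer entries. Hence any nonempty relatively open subset of $\hyp$ meets $\mathbb{Q}^N$; in particular, if the relative interiors of $\ds$ and $\Delta_{\s'}$ intersected, their (open) intersection would contain a rational point $p$. Applying the ``moreover'' part of Proposition~\ref{propds} to both seeds would then produce monoidal cluster monomials $M$ for $\s$ and $M'$ for $\s'$ with
$$ p \;=\; \frac{1}{|M|}\Psi([M]) \;=\; \frac{1}{|M'|}\Psi([M']). $$
Writing $[M] = \prod_i x_i^{a_i}$ and $[M'] = \prod_j y_j^{b_j}$ in the cluster variables of $\s$ and $\s'$ respectively, the barycentric identity $t_i = a_i |x_i|/|M|$ extracted from the proof of Proposition~\ref{propds}, combined with the interior condition $t_i > 0$, forces every $a_i > 0$ and symmetrically every $b_j > 0$; thus each of the two cluster monomials involves every cluster variable of its respective seed.

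The next step is to exploit the fact that $p \in \hyp$, which gives $\Psi([M]) = |M| p$ and $\Psi([M']) = |M'| p$, so $|M'|\Psi([M]) = |M|\Psi([M'])$. Through the monoid isomorphism $\varphi$ of Proposition~\ref{monoid}, this lifts to an equality of dominant words $\mu^{\odot |M'|} = \mu'^{\odot |M|}$, and since cluster monomials are real simple objects their powers are again simple and uniquely determined by the corresponding dominant word, so that $[M]^{|M'|} = [M']^{|M|}$ in $\A$. Invoking the factoriality of $\Aqnw$ and the primality of its cluster variables from \cite{GLSfactorial}, comparison of the prime factorizations on both sides---which involve every $x_i$ and every $y_j$ respectively---forces the sets $\{x_1,\ldots,x_N\}$ and $\{y_1,\ldots,y_N\}$ to coincide, contradicting the hypothesis on $\s$ and $\s'$.

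The main obstacle I anticipate is the final factoriality step: unique factorization only guarantees equality of prime factors up to units, whereas the conclusion demands equality as elements of $\Aqnw$. I would handle this by appealing directly to \cite{GLSfactorial}, where it is established that distinct cluster variables of $\Aqnw$ are pairwise non-associate, so that associate equivalence among cluster variables reduces to genuine equality and the desired contradiction follows.
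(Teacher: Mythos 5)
Your proof is correct and follows essentially the same route as the paper: both use Proposition~\ref{propds} to attach monoidal cluster monomials $M$, $M'$ to a common rational point, deduce the ring identity $[M]^{|M'|}=[M']^{|M|}$, and invoke the factoriality result of \cite{GLSfactorial} to compare the cluster variables occurring on each side. The only differences are organizational (you argue by contradiction from an interior point, while the paper shows every common rational point lies on a face of $\ds$) and that you spell out details the paper leaves implicit, such as the density of rational points in $\hyp$ and the non-associateness of distinct cluster variables.
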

 
  \begin{proof}
  Let $\boldsymbol{\nu} \in \mathbb{Q}^N \cap \ds \cap \Delta_{\mathcal{S}'}$; the previous Proposition implies the existence of a monoidal cluster monomial $M$ (resp.$M'$)  for the seed $\s$ (resp. $\mathcal{S}'$) such that $\frac{1}{d} \bmu = \frac{1}{d'} {\bmu}' = \boldsymbol{\nu}$ (with the same notations as in the previous proof). In particular one has $[M]^{d'} = [M']^{d}$. By  {{\cite[Theorem 8.3]{GLSfactorial}}}, this implies that any cluster variable involved in the monomial $[M]$ has to appear in $[M']$ and vice versa. As by hypothesis $\s$ and $\mathcal{S}'$ have different cluster variables, we conclude that at least one cluster variable of $\s$ does not occur in $[M]$. This obviously implies that $\frac{1}{d} \bmu$ belongs to a face of the simplex $\ds$. In other words $\boldsymbol{\nu} \notin \overset{\circ}{\ds}$.  Hence $\overset{\circ}{\ds} \cap \overset{\circ}{\Delta_{\mathcal{S}'}} = \emptyset$.
  \end{proof}

  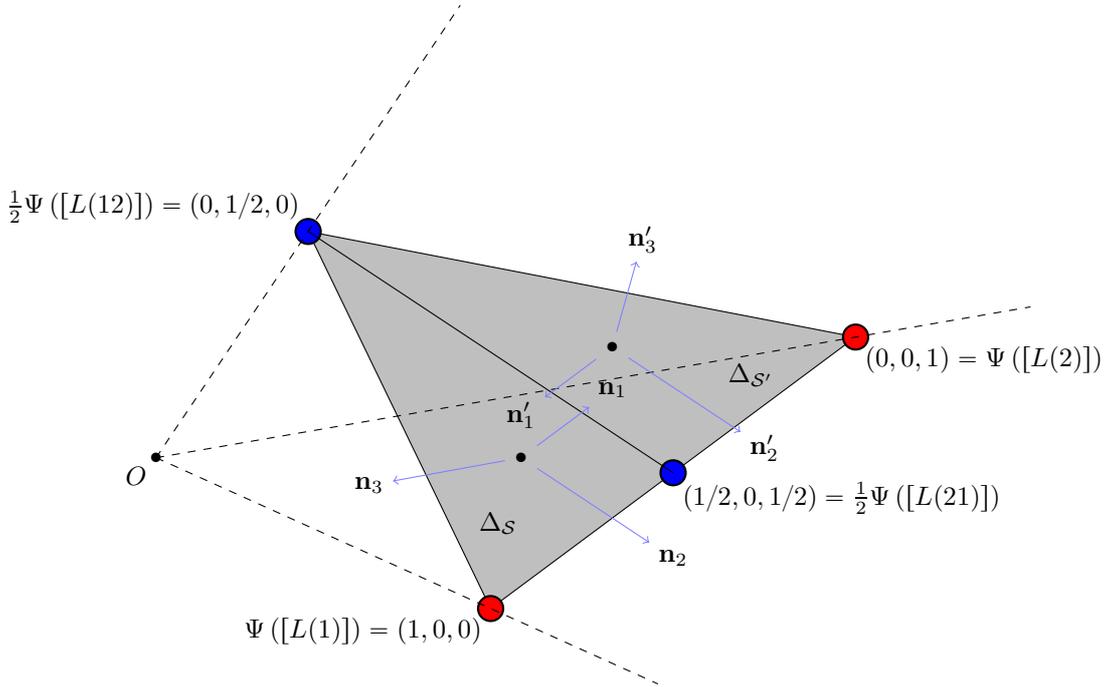
\begin{figure} 
  
   \center
 
  \begin{tikzpicture}[math2d]
  
  \tikzstyle{unfrozen}=[circle,draw,thick,fill=red];
  \tikzstyle{frozen}=[circle,draw,thick,fill=blue];
  \tikzstyle{normal}=[->,blue!50];

 \draw (0,0,0) node[below left] {$O$} node{$\bullet$};
 \draw[fill=gray!50] (0,0,4)--(4,0,0)--(0,2,0)--cycle;
 \draw (4,0,0) node[unfrozen] {} node[below left] {$\Psi \left( [L(1)] \right) =(1,0,0)$}; 
 \draw (0,0,4) node[unfrozen] {} node[below right] {$(0,0,1)= \Psi \left( [L(2)] \right)$}; 
 \draw (2,0,2) node[frozen]{} node[below right] {$(1/2,0,1/2)= \frac{1}{2} \Psi \left( [L(21)] \right)$}; 
 \draw (0,2,0) node[frozen]{} node[above left] {$\frac{1}{2} \Psi \left( [L(12)] \right) = (0,1/2,0)$}; 
 \node (gs) at (13/6,1/2,5/6) {$\bullet$}; 
 \node (gsprime) at (2/3,2/3,2) {$\bullet$}; 
 \node (n1) at (7/6,1/2,11/6) {$\mathbf{n}_1$};
 \node (n2) at (3,-1/3,5/3) {$\mathbf{n}_2$};
 \node (n3) at (17/6,5/6,-1/2) {$\mathbf{n}_3$};
 \node (n1prime) at (5/3,2/3,1) {$\mathbf{n}'_1$};
 \node (n2prime) at (3/2,-1/6,17/6) {$\mathbf{n}'_2$};
 \node (n3prime) at (-2/3,1,8/3) {$\mathbf{n}'_3$};
 
 \draw (2,0,2)--(0,2,0);
 \draw[dashed] (0,0,0)--(0,0,5);
 \draw[dashed] (0,0,0)--(6,0,0);
 \draw[dashed] (0,0,0)--(0,4,0);
 \draw (2,0,1) node[below] {$\Delta_{\mathcal{S}}$};
 \draw (0,0,3.4) node[below] {$\Delta_{\mathcal{S}'}$};
 
 \draw[normal] (gs)--(n1); 
 \draw[normal] (gs)--(n2); 
 \draw[normal] (gs)--(n3); 
  \draw[normal] (gsprime)--(n1prime); 
 \draw[normal] (gsprime)--(n2prime); 
 \draw[normal] (gsprime)--(n3prime); 
 
  \end{tikzpicture}
  
   \caption{In type $A_2$ the simplex $\Delta(\Aqn)$ is covered by the two simplices  $\Delta_{\mathcal{S}^{(1,2,1)}}$ and $\Delta_{\mathcal{S}^{(2,1,2)}}$.}
   
   \label{babyex}
   
    \end{figure}

 \begin{ex} 
 Consider the case where $\mathfrak{g}$ is of type $A_2$ as in Example~\ref{example_sl3}. We choose the natural order $1<2$ which yields a convex order on the set of positive roots given by $\alpha_1 < \alpha_1 + \alpha_2 < \alpha_2$. The corresponding good Lyndon words are respectively given by $1,12$ and $2$. 
   The cluster algebra $\Aqn$ has exactly two seeds, namely $\mathcal{S} = \mathcal{S}^{(1,2,1)}$ and $\mathcal{S}' = \mathcal{S}^{(2,1,2)}$. Each of them contains one unfrozen and two frozen variables. The cluster variables of $\s$ are given by $[L(1)], [L(12)], [L(21)]$ and their respective images under the valuation $\Psi$ are $(1,0,0), (0,1,0), (1,0,1)$ (recall that the last two are the frozen variables). Thus the vertices of $\ds$ have respective coordinates $(1,0,0), (0,1/2,0), (1/2,0,1/2)$ in the $3$-dimensional euclidian space. The unfrozen cluster variable of $\s'$ is given by $[L(2)]$ and $\Psi([L(2)]) = (0,0,1)$. The vertices of $\Delta_{\s'}$ are given by $(0,0,1), (0,1/2,0), (1/2,0,1/2)$. All these points belong to the affine plane $\hyp$ of equation $x_1 + 2x_2 + x_3 = 1$ in $\mathbb{R}^{3}$. Figure~\ref{babyex} shows the simplex $\Delta(\Aqn)$ covered by the two simplices corresponding to the seeds $\mathcal{S}$ and $\mathcal{S}'$. The blue dots correspond to the frozen variables and the red dots to the unfrozen variables. 
 \end{ex}
 
  \begin{ex}
  Consider an underlying Lie algebra $\mathfrak{g}$ of type $A_3$ and let $w := s_1s_2s_3s_1s_2$. Then $\Phi_{+}^{w} = \{ \alpha_1, \alpha_1 + \alpha_2 , \alpha_1 + \alpha_2  + \alpha_3 , \alpha_2 , \alpha_2 + \alpha_3 \}$. The reduced expression of $w$ corresponding to the restriction of the natural ordering $1<2<3$ on $\Phi_{+}^{w}$ is $(1,2,3,1,2)$. By Theorem~\ref{initpara}, the cluster variables of the seed $\mathcal{S}^{(1,2,3,1,2)}$  are given by the following dominant words:
  $$ (1) \qquad (12) \qquad (123) \qquad  (21) \qquad (2312)  $$
  where the first two are unfrozen and the last three are frozen. Their respective images under $\Psi$ are $~^t(1,0,0,0,0)  ~^t(0,1,0,0,0), ~^t(0,0,1,0,0), ~^t(1,0,0,1,0)$ and $ ~^t(0,1,0,0,1)$. The cluster algebra $\Aqnw$ has five seeds. A straightforward computation shows that the other cluster variables of $\Aqnw$ correspond to the dominant words $(2), (23)$ and $(231)$ of respective images $~^t(0,0,0,1,0), ~^t(0,0,0,0,1)$ and $~^t(1,0,0,0,1)$ under $\Psi$. 
  The simplex $\Delta(\Aqnw)$ is of full dimension $4$ inside the affine hyperplane $\hyp \subset \mathbb{R}^{5}$ given by the equation $x_1 + 2x_2 + 3x_3 + x_4 + 2x_5 = 1$. Note that the frozen variable $[L(123)]$ appears in every seed and its image under $\Psi$ is the third vector of the standard basis of $\mathbb{Z}^{5}$ (see also Equation~\eqref{PsiLyndon}). The images under $\Psi$ of the other cluster variables have zero entry along this direction. Hence one can get a three-dimensional picture by projecting on $x_3 = 0$ as shown in Figure~\ref{ex5seeds}. As in the previous example, the blue dots correspond to the frozen variables and the red dots to the unfrozen variables.

    \begin{figure}  
       \begin{tikzpicture}[math3d]
  
  \tikzstyle{unfrozen}=[circle,draw,thick,fill=red];
  \tikzstyle{frozen}=[circle,draw,thick,fill=blue];
  
   \draw[dashed] (0,0,4)--(0,2,0);
  \draw[dashed] (0,0,0)--(2,0,2);
  \draw[dashed] (4/3,0,0)--(2,0,2);
  \draw (0,0,4)--(0,4,0);
  \draw (4,0,0)--(4/3,0,0);
  \draw[dashed] (0,0,4)--(0,0,0);
  \draw (0,0,4)--(2,0,2);
  \draw (4,0,0)--(2,0,2);
  \draw (0,4,0)--(0,2,0);
  \draw (2,0,2)--(0,4,0);
 
  \draw[fill=blue!30,opacity=0.8] (0,0,0)--(4/3,0,0)--(0,2,0)--cycle;
  \draw[dashed,fill=blue!30,opacity=0.9] (2,0,2)--(4/3,0,0)--(0,2,0)--cycle;
   \draw[fill=red!80,opacity=0.5] (4,0,0)--(0,4,0)--(2,0,2)--cycle;
   \draw[fill=red!80,opacity=0.5] (4,0,0)--(0,4,0)--(0,2,0)--cycle;
    \draw[dashed,fill=red!80,opacity=0.1] (2,0,2)--(0,4,0)--(0,2,0)--cycle;
  \draw[dashed,fill=yellow!30,opacity=0.5] (4,0,0)--(2,0,2)--(4/3,0,0)--cycle;
  \draw[fill=yellow!30,opacity=0.5] (4/3,0,0)--(0,2,0)--(4,0,0)--cycle;
   \draw[dashed,fill=yellow!30,opacity=0.7] (4,0,0)--(0,2,0)--(2,0,2)--cycle;
   \draw[dashed,fill=green!60,opacity=0.2] (0,0,0)--(0,0,4)--(2,0,2)--cycle;
  \draw[dashed,fill=green!60,opacity=0.2] (0,2,0)--(0,0,4)--(2,0,2)--cycle;
   \draw[dashed,fill=green!60,opacity=0.2] (0,2,0)--(0,0,0)--(2,0,2)--cycle;
  
  \draw[thick] (4,0,0)--(0,4,0);
   \draw[thick] (4,0,0)--(0,4,0);
  \draw[thick] (4,0,0)--(0,0,0);
  \draw[dashed,thick] (0,0,0)--(0,0,4);
   \draw[thick] (0,4,0)--(0,0,0);
   \draw[thick] (4,0,0)--(0,0,4);
    \draw[thick] (0,0,4)--(0,4,0);
    \draw (4,0,0)--(0,2,0);
  
   \draw (4,0,0) node[unfrozen]{} node[below left,text width=2cm]{ $\Psi \left( [L(1)] \right) = (1,0,0,0,0) $ };
   \draw (0,4,0) node[unfrozen]{} node[above left,text width=2cm]{$ \frac{1}{2} \Psi \left( [L(12)] \right) = (0,1/2,0,0,0)$};
   \draw (0,0,4) node[unfrozen]{} node[below right,text width=2cm]{$ \Psi \left( [L(2)] \right) = (0,0,0,1,0)$};
   \draw (0,0,0) node[unfrozen]{} node[above left,text width=2cm]{$ \frac{1}{2} \Psi \left( [L(23)] \right) = (0,0,0,0,1/2)$};
   \draw (4/3,0,0) node[unfrozen]{} node[below left,text width=2cm]{$ \frac{1}{3} \Psi \left( [L(231)] \right) = (1/3,0,0,0,1/3)$};
   \draw (2,0,2) node[frozen]{} node[below right,text width=2cm]{$ \frac{1}{2} \Psi \left( [L(21)] \right) = (1/2,0,0,1/2,0)$};
   \draw (0,2,0) node[frozen]{} node[above left,text width=2cm]{$ \frac{1}{4} \Psi \left( [L(2312)] \right) = (0,1/2,0,0,1/4) \enspace $} ;
   
    \end{tikzpicture}
    
     \bigskip
     \bigskip
     
      \caption{In type $A_3$ with $w := s_1s_2s_3s_1s_2$, the simplex $\Delta(\Aqnw)$ is covered by five smaller simplices, colored in white, red, yellow, blue, and green.}
   
    \label{ex5seeds}
   
    \end{figure}
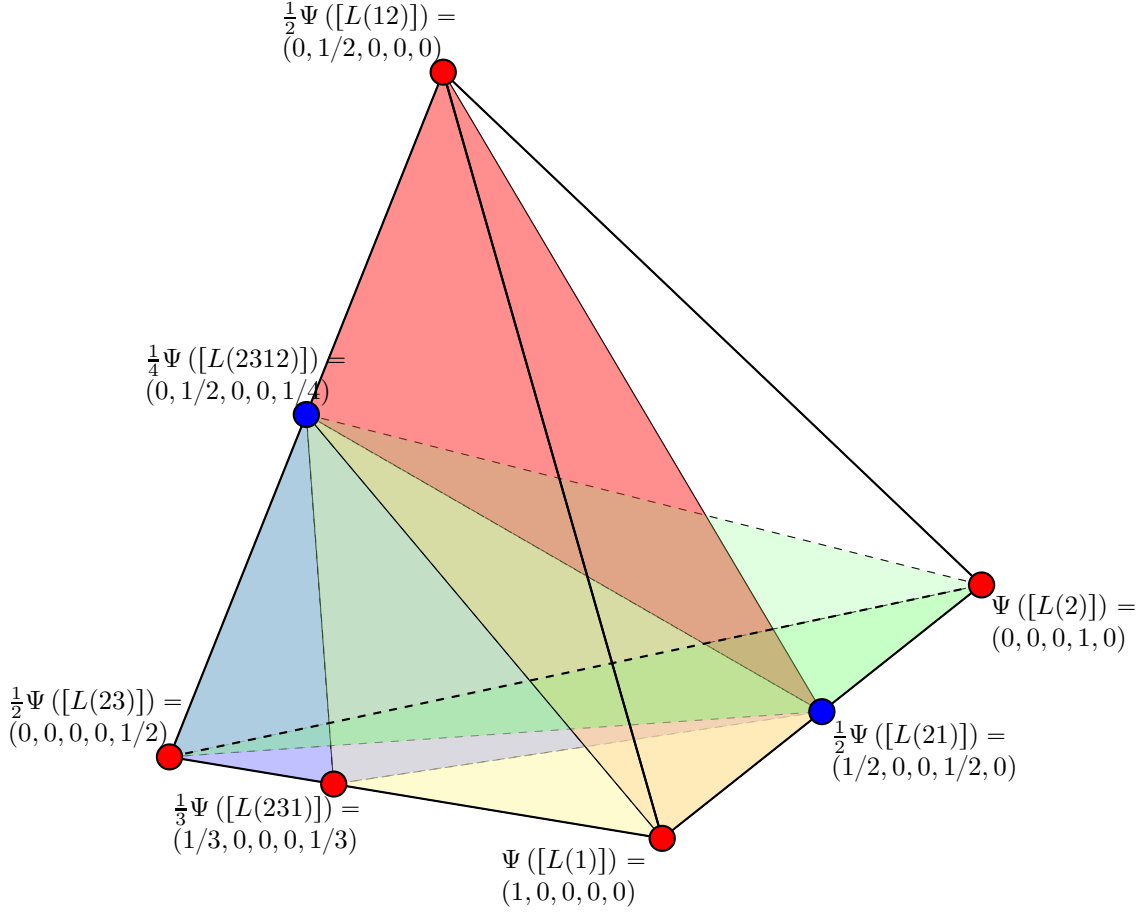
    
     \end{ex}
    
    \bigskip
    
\subsection{Tropical $\epsilon$-mutation} 
 \label{epsilonmut}
 
 Tropical \textit{epsilon}-mutations were defined in \cite{Nakanishi} as tropical exchange relations similar to the mutation rules of $g$-vectors or $c$-vectors. They involve a sign $\epsilon$ which can be chosen arbitrarily at each mutation. The usual tropical exchange relations of $g$-vectors and $c$-vectors are obtained by choosing $\epsilon$ to be the coherency sign of $c$-vectors. In this section we show how monoidal categorifications can provide new examples of interesting tropical $\epsilon$-mutations. In the case of the categorifications of coordinate rings via finite type quiver Hecke algebras, the sign $\epsilon$ comes from the natural ordering on parameters of simple objects in $R-gmod$.

  We fix a seed $\s = ((x_1, \ldots , x_N), B)$ in $\A$ as well as a mutation direction $k$. Let $ \mathcal{S}'$ denote the seed obtained from $\s$ after mutation in the direction $k$. The cluster variable $x_k$ is replaced by $x'_k$. We let $M_i$ (resp. $M'_k$) denote the simple module whose class is $x_i$  for every $1 \leq i \leq N$ (resp. $x'_k$). It follows from the constructions of \cite{KKKO} that this cluster mutation at the level of $\A$ comes from a short exact sequence 
 \begin{equation} \label{ses}
     0 \rightarrow  \bigodot_{b_{ik}>0} M_i^{\circ b_{ik}} \rightarrow  M_k \circ M'_k \rightarrow \bigodot_{b_{ik}<0} M_i^{\circ (-b_{ik})} \rightarrow 0
   \end{equation}
  in $\C$.
 We set 
  $$ \bmu_i := \Psi(x_i), \quad i \in J  \qquad \text{and} \qquad \bmjh := \Psi(\yjh) , \quad j \in J_{ex} $$
 (see Section~\ref{remindCasbi} for the definition of the $\yjh$). 
 As the exchange matrix $B$ has full rank $\sharp J_{ex}$, one can extend it into a $N \times N$ invertible matrix $\tilde{B}$. We then set as before
  $$  \yjh := \prod_{1 \leq i \leq N} x_i^{b_{ij}} \quad \text{and} \quad \bmjh := \Psi(\yjh) \quad \text{for any $1 \leq j \leq N$} . $$
  We show that the vectors $\bmjh$ satisfy a tropical $\epsilon$-mutation in the sense of Nakanishi \cite{Nakanishi}.
 
   As $\zn$ is equipped with a total lexicographic order, the following definition makes sense: for any $k \in J_{ex}$, we set 
    $$ \eta_k := \begin{cases} 
              +1 & \text{ if $\bmkh > \overrightarrow{0} $ } \\
              -1 & \text{ otherwise} 
                \end{cases} . $$
 
   \begin{lem} \label{epsilonmutation}
 The following equality holds in $\zn$:
  $$ {\bmu}'_k =  - \bmu_k +  \sum_{i} [\eta_k b_{ik}]_{+} \bmu_i . $$
  \end{lem}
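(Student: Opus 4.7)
The plan is to apply the valuation $\Psi$ to the exchange relation in $K_0(\C)$ that the short exact sequence~\eqref{ses} induces:
$$ [M_k][M'_k] = \Bigl[\bigodot_{b_{ik}>0} M_i^{\circ b_{ik}}\Bigr] + \Bigl[\bigodot_{b_{ik}<0} M_i^{\circ(-b_{ik})}\Bigr]. $$
On the left, multiplicativity of $\Psi$ (Lemma~\ref{valu}) immediately gives $\bmu_k + {\bmu}'_k$. The right-hand side is a sum of classes of two \emph{simple} modules --- this simplicity is precisely what the monoidal categorification results of \cite{KKKO} guarantee --- so by multiplicativity again their valuations are, respectively, $\sum_{b_{ik}>0} b_{ik}\bmu_i$ and $\sum_{b_{ik}<0} (-b_{ik})\bmu_i$.

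The key observation is that the difference of these two vectors is exactly $\bmkh = \sum_i b_{ik}\bmu_i$, which is nonzero by the very definition of the sign $\eta_k$. Hence the two simple classes on the right have distinct valuations in $\zn$. This is what lets me upgrade the valuation inequality $\Psi(f+g) \leq \max(\Psi(f),\Psi(g))$ into an equality here: since $\tilde\Psi$ is defined as the leading (highest) dominant word appearing in the decomposition of an element on classes of simples, the valuation of a sum of two simple classes with distinct dominant words is precisely the larger of the two, read in the lexicographic order on $\zn$ (Section~\ref{totord}). I expect this leading-term identification to be the main conceptual step; once it is in hand, the remainder is bookkeeping.

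To conclude, I would split on the sign $\eta_k$. When $\eta_k = +1$, i.e.\ $\bmkh > \vec{0}$, the strict inequality $\sum_{b_{ik}>0} b_{ik}\bmu_i > \sum_{b_{ik}<0} (-b_{ik})\bmu_i$ holds, and the maximum equals $\sum_i [b_{ik}]_{+}\bmu_i = \sum_i [\eta_k b_{ik}]_{+}\bmu_i$; the case $\eta_k = -1$ is symmetric, with the larger term being $\sum_i [-b_{ik}]_{+}\bmu_i = \sum_i [\eta_k b_{ik}]_{+}\bmu_i$. Equating the valuations of the two sides of the exchange relation now gives $\bmu_k + {\bmu}'_k = \sum_i [\eta_k b_{ik}]_{+}\bmu_i$, which rearranges to the announced formula.
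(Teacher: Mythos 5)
Your proposal is correct and follows essentially the same route as the paper: apply the valuation $\Psi$ to the exchange relation for $x_k x'_k$, use multiplicativity on each side, observe that the valuation of the sum on the right is the maximum of the two terms, and split on the sign $\eta_k$ to identify that maximum with $\sum_i [\eta_k b_{ik}]_{+}\bmu_i$. Your explicit justification that the inequality in axiom (3) of Definition~\ref{defvalu} becomes an equality (via the simplicity of the two right-hand classes and the distinctness of their valuations, which really rests on Corollary~\ref{invert} and the full rank of $B$ rather than on the definition of $\eta_k$ alone) is a detail the paper leaves implicit, but it does not change the argument.
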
 

 \begin{proof}
Consider the exchange relation 
$$ x'_k x_k  =  \prod_{i, b_{ik}>0} x_i^{b_{ik}} + \prod_{i, b_{ik}<0} x_i^{-b_{ik}}. $$
By Lemma~\ref{valu}, $\Psi$ is a valuation hence we get
 $$ \Psi(x'_k) = - \Psi(x_k) + \max \left( \sum_{b_{ik} > 0} b_{ik} \Psi(x_i) , \sum_{b_{ik} < 0} (- b_{ik}) \Psi(x_i) \right).$$
   Moreover, 
$$  \sum_{b_{ik} > 0} b_{ik} \Psi(x_i) > \sum_{b_{ik} < 0} (- b_{ik}) \Psi(x_i)
   \Leftrightarrow \sum_{i} b_{ik} \Psi(x_i) > \overrightarrow{0} 
    \Leftrightarrow \Psi(\yjh) > \overrightarrow{0} 
    \Leftrightarrow \eta_k = +1 $$
     Hence if $\eta_k = 1$, then $ {\bmu}'_k =  \Psi(x'_k) = - \Psi(x_k) + \sum_{b_{ik} > 0} b_{ik} \Psi(x_i) = - \bmu_k + \sum_{i} [b_{ik}]_{+} \bmu_i $. 
   Similarly one can show that if $\eta_k = -1$ then ${\bmu}'_k = \Psi(x'_k) = - \Psi(x_k) + \sum_{i} [-b_{ik}]_{+} \Psi(x_i) = - \bmu_k + \sum_{i} [-b_{ik}]_{+} \bmu_i$.
\end{proof}   
  
   Now we describe the mutation of the vectors $\bmjh$. We let ${\bmjh}'$ denote the analogs of $\bmjh$  for the seed $\mathcal{S}'$. 
   
   \begin{lem} \label{tropical}
    The vectors ${\bmjh}', 1 \leq j \leq N$ are given by:
$${\bmjh}' = 
\begin{cases}
  - \bmkh & \text{if $j=k$,} \\
 \bmjh + [\eta_k b_{jk}]_{+} \bmkh &\text{otherwise.}
  \end{cases} $$
  \end{lem}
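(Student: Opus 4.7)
The plan is to compute $\bmjh' = \Psi(\yjh')$ directly, using $\yjh' = \prod_i (x'_i)^{b'_{ij}}$ with $b'$ given by the Fomin-Zelevinsky mutation of $\tilde B$, the equalities $x'_i = x_i$ for $i \neq k$, and Lemma~\ref{epsilonmutation} to substitute $\bmu'_k$ in terms of the initial $\bmu_i$'s. Since $\Psi$ is a valuation by Lemma~\ref{valu}, this reduces the calculation to a linear algebra identity over $\mathbb{Z}$.

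The case $j = k$ is essentially immediate: the mutation rule gives $b'_{ik} = -b_{ik}$ for every $i$, and since $b_{kk} = 0$ the new variable $x'_k$ does not appear in $\ykh'$ at all. Hence $\ykh' = (\ykh)^{-1}$, whence $\bmkh' = -\bmkh$.

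For $j \neq k$, I would first extract the $x'_k$ factor via $b'_{kj} = -b_{kj}$ and expand $b'_{ij} = b_{ij} + [b_{ik}]_+ [b_{kj}]_+ - [-b_{ik}]_+ [-b_{kj}]_+$ for $i \neq k$. Applying $\Psi$ and plugging in $\bmu'_k = -\bmu_k + \sum_i [\eta_k b_{ik}]_+ \bmu_i$ from Lemma~\ref{epsilonmutation} produces a $\mathbb{Z}$-linear combination of $\bmu_1, \ldots , \bmu_N$. The coefficient of $\bmu_k$ should vanish by a direct cancellation: the contribution $-b_{kj} \cdot (-\bmu_k)$ coming from $\bmu'_k$ kills the $-b_{kj} \bmu_k$ extracted from $\sum_{i \neq k} b_{ij} \bmu_i = \bmjh - b_{kj} \bmu_k$. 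Decomposing $b_{kj} = [b_{kj}]_+ - [-b_{kj}]_+$ to reorganize the surviving terms and freely inserting the vanishing $i = k$ contributions (since $b_{kk} = 0$), I expect to arrive at
\begin{align*}
\bmjh' = \bmjh + [b_{kj}]_+ \sum_i \bigl([b_{ik}]_+ - [\eta_k b_{ik}]_+\bigr)\bmu_i + [-b_{kj}]_+ \sum_i \bigl([\eta_k b_{ik}]_+ - [-b_{ik}]_+\bigr)\bmu_i.
\end{align*}
A case split on $\eta_k \in \{\pm 1\}$ will annihilate one of the two inner sums, while the other collapses via $[b_{ik}]_+ - [-b_{ik}]_+ = b_{ik}$ to $\sum_i b_{ik}\bmu_i = \bmkh$. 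The skew-symmetry $b_{jk} = -b_{kj}$ of the exchange matrix of $\Aqnw$ then rewrites the surviving coefficient as $[\eta_k b_{jk}]_+$ in both cases.

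The main obstacle will be the sign bookkeeping in the cancellation step: combining the internal sign $\eta_k$ arising from Lemma~\ref{epsilonmutation} with the signs of $b_{kj}$ appearing in the mutation rule for $\tilde B$, and noticing that the $\bmu_k$-contributions cancel perfectly before any case split on $\eta_k$. Once the reorganization above is in place, the remaining simplification is a uniform application of the elementary identity $[x]_+ - [-x]_+ = x$.
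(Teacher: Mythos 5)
Your proof is correct, but it takes a genuinely different route from the paper. The paper's proof simply quotes the $\hat y$-seed mutation formula of Fomin--Zelevinsky ({\cite[Proposition 3.9]{FZ4}}), namely ${\yjh}' = \yjh\,{\ykh}^{[b_{kj}]_+}(\ykh+1)^{-b_{kj}}$ for $j\neq k$ and ${\ykh}'={\ykh}^{-1}$, and then applies the valuation $\Psi$, using $\Psi(\ykh+1)=\max(\bmkh,\overrightarrow{0})$; the sign $\eta_k$ enters only at that tropicalization step. You instead bypass this citation and rederive the tropical identity from scratch: matrix mutation of $\tilde B$, the fact that $x'_i=x_i$ for $i\neq k$, the substitution $\bmu'_k=-\bmu_k+\sum_i[\eta_k b_{ik}]_+\bmu_i$ from Lemma~\ref{epsilonmutation}, and additivity of $\Psi$ (Lemma~\ref{valu}). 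I checked your intermediate display: after cancelling the $\bmu_k$-terms and splitting $b_{kj}=[b_{kj}]_+-[-b_{kj}]_+$, the coefficient of each $\bmu_i$ is indeed $[b_{kj}]_+([b_{ik}]_+-[\eta_k b_{ik}]_+)+[-b_{kj}]_+([\eta_k b_{ik}]_+-[-b_{ik}]_+)$, the case split on $\eta_k$ collapses one sum and turns the other into $\bmkh$ via $[x]_+-[-x]_+=x$, and $b_{jk}=-b_{kj}$ gives the stated coefficient $[\eta_k b_{jk}]_+$; the $j=k$ case via $b'_{kk}=0$ is also fine. What your route buys is self-containedness and a transparent explanation of where $\eta_k$ comes from (it records which exchange monomial has the larger valuation, exactly as in Lemma~\ref{epsilonmutation}), at the cost of heavier bookkeeping; the paper's route is shorter but leans on the known $Y$-pattern mutation. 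Note that your final appeal to skew-symmetry $b_{jk}=-b_{kj}$, including for indices $j$ coming from the added columns of the extension $\tilde B$, is exactly the same (implicit) assumption the paper makes when it rewrites $[-b_{kj}]_+=[b_{jk}]_+$, so this is not a gap specific to your argument.
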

   
    \begin{proof}
    By {{\cite[Proposition 3.9]{FZ4}}}, one has 
     $$   {\yjh}' = \begin{cases}
           {\ykh}^{-1} & \text{ if $j=k$, } \\
           \yjh  {\ykh}^{[b_{kj}]_{+}}  (\ykh + 1)^{-b_{kj}} & \text{otherwise.}
            \end{cases}$$
  Hence applying the valuation $\Psi$ yields
 $$   {\bmjh}' = \begin{cases}
         - {\bmkh}  & \text{ if $j=k$, } \\
           \bmjh +  [b_{kj}]_{+} \bmkh - b_{kj} \left( \max (\bmkh , \overrightarrow{0}) \right) & \text{otherwise.}
            \end{cases}$$
    Consider $j \neq k$. If $\eta_k = 1$ then $\bmkh > \overrightarrow{0}$ and thus
    $${\bmjh}' = \bmjh +  [b_{kj}]_{+} \bmkh - b_{kj} {\bmkh} = \bmjh + [-b_{kj}]_{+} \bmkh = \bmjh + [b_{jk}]_{+} \bmkh . 
    $$ 
    Similarly if $\eta_k = -1$ then 
 $$ {\bmjh}' = \bmjh +  [b_{kj}]_{+} \bmkh  = \bmjh +  [-b_{jk}]_{+} \bmkh . $$
    \end{proof}

 \begin{rk}
   As explained in \cite{Nakanishi}, tropical $\epsilon$-mutations are of particular interest in cluster theory for describing mutation rules of $g$-vectors and $c$-vectors. This follows from a particular choice of tropical sign, namely the sign of $c$-vectors. This sign-coherency property has been proved by Derksen-Weyman-Zelevinsky \cite{DWZ} in the skew-symmetric case and by Gross-Hacking-Keel-Kontsevich \cite{GHKK} in the general case. In our setting, the sign in the mutation rule does not come from the sign-coherence of $c$-vectors. Thus the mutation rule given by Lemma~\ref{tropical} gives a new example of $\epsilon$-tropical mutation where the tropical sign $\eta_k$ encodes the natural ordering on dominant words.
 \end{rk} 
   
   We end this section with a couple of elementary remarks that will be useful in Section~\ref{normalfan}.
   
    \begin{lem} \label{free}
 The vectors $\bmjh , j \in J$ form a basis of $\mathbb{R}^N$. 
 \end{lem}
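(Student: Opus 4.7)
The plan is to express the family $(\bmjh)_{j \in J}$ as the image of the basis $(\bmu_i)_{i \in J}$ under the linear map with matrix $\tilde{B}$, and then invoke Corollary~\ref{invert} together with the assumption that $\tilde{B}$ is invertible.

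First, I would unpack the definitions. By the very definition of $\yjh = \prod_{1 \leq i \leq N} x_i^{b_{ij}}$ and the multiplicative property of the valuation $\Psi$ (Lemma~\ref{valu}), one immediately gets
\[
 \bmjh \;=\; \Psi(\yjh) \;=\; \sum_{i=1}^{N} b_{ij}\, \Psi(x_i) \;=\; \sum_{i=1}^{N} b_{ij}\, \bmu_i .
\]
Hence, letting $P$ denote the matrix whose columns are $\bmjh, 1 \leq j \leq N$, expressed in the standard basis of $\mathbb{R}^N$, one has the factorisation $P = \mathcal{M}_{\s} \cdot \tilde{B}$.

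Next, I would apply Corollary~\ref{invert}, which ensures $\mathcal{M}_{\s} \in GL_N(\mathbb{Z})$ for any seed $\s$. Combined with the fact that $\tilde{B}$ was chosen to be an invertible extension of the exchange matrix $B$ (this is exactly the setting fixed just before the statement, where one extends $B$ to a full-rank $N \times N$ matrix), the product $P = \mathcal{M}_{\s}\, \tilde{B}$ is invertible over $\mathbb{Q}$.

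There is essentially no obstacle here: the lemma is a direct consequence of the valuation's additivity on products and of the already-established invertibility of $\mathcal{M}_{\s}$. The only point that requires a little care is the status of the $\bmjh$ for $j \in J_{fr}$, but this is handled by the convention fixed immediately above the lemma, where $\tilde{B}$ extends $B$ to an invertible $N \times N$ integer matrix. One concludes that the $\bmjh$ form a $\mathbb{Q}$-basis, hence an $\mathbb{R}$-basis of $\mathbb{R}^N$.
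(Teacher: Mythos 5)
Your proposal is correct and coincides with the paper's own argument: it writes $\bmjh = \sum_i \tilde b_{ij}\,\Psi(x_i)$, i.e.\ the matrix of the $\bmjh$ factors as $\mathcal{M}_{\s}\tilde{B}$, and concludes from Corollary~\ref{invert} together with the invertibility of the extension $\tilde{B}$. No substantive difference from the paper's proof.
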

 
 \begin{proof}
 Recall the matrix $M_{\s}$ introduced in Section~\ref{seedsection}. By definition, one has 
 $$ \forall j \in J, \bmjh = \Psi(\yjh) = \sum_{1 \leq i \leq N} \tilde{b}_{ij} \Psi(x_i) = M_{\s}\tilde{B} {\bf e}_j  $$
 where $\tilde{B}$ is the extended exchange matrix of the seed $\s$. By Corollary~\ref{invert} the matrix $M_{\s}$ is invertible. As $\tilde{B}$ is invertible, the composition $M_{\s}B$ is invertible as well and thus the family $(\bmjh)_{j \in J}$ is a basis.  
 \end{proof}
 
 \begin{rk} \label{bmjhvect}
  By construction the three non trivial terms in the short exact sequence~\eqref{ses} have the same weight. This implies that for any $j \in J_{ex}$, the vector $\bmjh$ is of weight zero. By this we mean that for $j \in J_{ex}$, $\bmjh$ belongs to the kernel of the following linear map:
  $$ \begin{array}{cccc}
   {\bf \wt} : &\mathbb{R}^N & \longrightarrow & \mathbb{R}^{\sharp I} \\
    {}    &     {\bf c} = ~^t(c_1, \ldots , c_N) & \longmapsto & \sum_k c_k \beta_k
  \end{array} $$ 
 where elements of $Q_{+}$ are identified with vectors in $\mathbb{R}^{\sharp I}$ in an obvious way. Moreover $\sharp I = \sharp J_{fr} = N - \sharp J_{ex}$ (see {{\cite[Section 2.3]{KK}}}). Hence the $\bmjh , j \in J_{ex}$ form a basis of $\ker {\bf \wt}$. This holds for any seed. 
 \end{rk}
 
  \bigskip

 \subsection{The normal fan to $\ds$}
  \label{normalfan}
  
  In this section, we provide a geometric interpretation of certain tools used in our previous work \cite{Casbi} as analogues in the group $\G_w$ of Fomin-Zelevinsky's variables $\yjh$ with respect to the cluster structure of $\Aqnw$. More precisely, we show that, up to some universal linear transformation, the images under the valuation $\Psi$ of the $\yjh$ for each seed $\s$ of $\Aqnw$ form a face of the normal fan of the simplex $\ds$. 
  
   Recall that for a polytope $\Delta$ in a (finite-dimensional) Euclidian space and for each face $F$ of $\Delta$, the normal cone of $F$ is defined as the cone whose generating rays are the outer normal vectors to the facets (i.e. the faces of codimension $1$) containing $F$. The collection of all these cones is the normal fan of $\Delta$. The rays (i.e. the faces of dimension $1$) of the normal fan of $\Delta$ are precisely the normal vectors to the facets of $\Delta$. 
  
  Recall that we fix a total order $<$ on $I$, an element $w \in W$ and the unique reduced expression ${\bf w}$ of $w$ corresponding to the restriction of  $<$ to $\Phi_{+}^{w}$. 
Let us introduce a family of vectors ${\bf n}_j^{\s}$ for every seed $\s$ in $\A$. They are defined inductively as follows:

 \begin{deftn} \label{defnvector}
 Consider the initial seed $\mathcal{S}^{\bf w}$. We define 
 $$ {\bf n}_j^{0} := 
 \begin{cases} {\bf e}_j - {\bf e}_{j_{+}} -  (\hgt(\beta_j) - \hgt(\beta_{j_{+}})) \frac{\boldsymbol{\lambda}}{||\boldsymbol{\lambda}||^{2}} & \text{if $j \in J_{ex}$,} \\
                          {\bf e}_j - \hgt(\beta_j) \frac{\boldsymbol{\lambda}}{||\boldsymbol{\lambda}||^{2}} & \text{if $j \in J_{fr}$.}
 \end{cases}
 $$
 Then given two seeds $\s,{\s}'$ related to each other by a mutation in the direction $k \in J_{ex}$, the vectors ${\bf n}_j^{{\s}'}$ are related to the ${\bf n}_j^{\s}$ by the following tropical $\epsilon$-mutation:
  $$ {\bf n}_j^{{\s}'}  = 
\begin{cases}
  - {\bf n}_k^{\s} & \text{if $j=k$,} \\
{\bf n}_j^{\s} + [\eta_k b_{jk}]_{+} {\bf n}_k^{\s} &\text{otherwise.}
  \end{cases} $$
 \end{deftn}
 
 As any seed can be reached by a finite sequence of mutations from the initial seed $\mathcal{S}^{\bf w}$, this defines in a unique way the vectors ${\bf n}_j^{\s}$ for every seed $\s$. 
 The main result of this section can now be stated as follows:
 
 \begin{thm} \label{mainthm}
 For any seed $\s$, the vectors ${\bf n}_j^{\s}$ are the rays of the normal fan of $\ds$. 
 \end{thm}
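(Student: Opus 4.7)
I will prove the statement by induction on the mutation distance from the initial seed $\mathcal{S}^{\bf w}$. Since $\ds$ is the $(N-1)$-dimensional simplex with vertices $v_i := \Psi(x_i)/|\mu_i|$ lying in the affine hyperplane $\hyp$, verifying that ${\bf n}_j^{\s}$ represents the ray of the normal fan opposite the vertex $v_j$ reduces to three conditions: first, $\langle {\bf n}_j^{\s}, \boldsymbol{\lambda} \rangle = 0$, so that ${\bf n}_j^{\s}$ is a chosen representative in the transversal $\boldsymbol{\lambda}^{\perp}$ of the lineality $\mathbb{R}\boldsymbol{\lambda}$; second, $\langle {\bf n}_j^{\s}, v_i \rangle = c_j^{\s}$ takes a common value independent of $i$ for all $i \ne j$; and third, $\langle {\bf n}_j^{\s}, v_j \rangle > c_j^{\s}$, giving the correct orientation.

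For the base case $\s = \mathcal{S}^{\bf w}$, Theorem~\ref{initpara} yields $\Psi(x_k) = \sum_{\ell \in J_k} {\bf e}_\ell$ and $|\mu_k| = \sum_{\ell \in J_k} \hgt(\beta_\ell)$. Since $j_{+}$ is by definition the \emph{next} occurrence of the letter $i_j$, a short case analysis on whether $i_i = i_j$ shows that $\langle {\bf e}_j - {\bf e}_{j_{+}}, v_i \rangle$ equals $0$ whenever $i \ne j$ and equals $1/|\mu_j|$ when $i = j$. Subtracting the $\boldsymbol{\lambda}$-component from Definition~\ref{defnvector} produces $\langle {\bf n}_j^{0}, \boldsymbol{\lambda} \rangle = 0$, together with the quantitative identity
$$\langle {\bf n}_j^{\mathcal{S}^{\bf w}}, v_j \rangle - \langle {\bf n}_j^{\mathcal{S}^{\bf w}}, v_i \rangle = \frac{1}{|\mu_j|} \qquad (i \ne j),$$
from which the three conditions follow. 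The frozen case $j \in J_{fr}$ is treated uniformly by the formal conventions $j_{+} = N+1$, ${\bf e}_{N+1} = 0$, $\hgt(\beta_{N+1}) = 0$.

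For the inductive step, assume the statement at a seed $\s$ and mutate in direction $k \in J_{ex}$. Only the vertex $v_k$ moves, and by Lemma~\ref{epsilonmutation} the new vertex satisfies $|\mu_k'|\,v_k' = -|\mu_k|\,v_k + \sum_i [\eta_k b_{ik}]_{+}\,|\mu_i|\,v_i$. When $j = k$, the facet opposite $v_k$ is left fixed, but the interior of $\Delta_{\mathcal{S}'}$ now lies on the opposite side, forcing ${\bf n}_k^{\s'} = -{\bf n}_k^{\s}$. When $j \ne k$, the facets opposite $v_j$ in $\ds$ and in $\Delta_{\mathcal{S}'}$ share the codimension-one affine subspace spanned by $\{v_i : i \ne j,k\}$, so ${\bf n}_j^{\s'}$ must lie in $\mathbb{R}\,{\bf n}_j^{\s} + \mathbb{R}\,{\bf n}_k^{\s}$. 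Writing ${\bf n}_j^{\s'} = {\bf n}_j^{\s} + \alpha\,{\bf n}_k^{\s}$, imposing $\langle {\bf n}_j^{\s'}, v_k' \rangle = \langle {\bf n}_j^{\s'}, v_i \rangle$ for some $i \ne j,k$, substituting the expression for $v_k'$ above, and applying the inductive hypothesis, one obtains
$$\alpha \;=\; [\eta_k b_{jk}]_{+} \, \cdot \, \frac{|\mu_j|\,(\langle {\bf n}_j^{\s}, v_j \rangle - c_j^{\s})}{|\mu_k|\,(\langle {\bf n}_k^{\s}, v_k \rangle - c_k^{\s})}.$$

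The crux of the proof is that the normalization $|\mu_j|\,(\langle {\bf n}_j^{\s}, v_j \rangle - c_j^{\s}) = 1$, established at the initial seed by the identity displayed above, is preserved under every mutation: for $j \ne k$ both $|\mu_j|$ and the pairing difference are unchanged, while for $j=k$ a direct computation using the expression for $v_k'$, the rule ${\bf n}_k^{\s'} = -{\bf n}_k^{\s}$, and the inductive equality $|\mu_k|(\langle {\bf n}_k^{\s}, v_k \rangle - c_k^{\s}) = 1$ yields the same value $1$. Granted this invariant, $\alpha$ collapses to $[\eta_k b_{jk}]_{+}$, exactly matching Definition~\ref{defnvector}, and the three characterizing conditions at $\s'$ follow. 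The main technical obstacle is therefore this synchronized bookkeeping: the combinatorial coefficient $[\eta_k b_{jk}]_{+}$ of the tropical $\epsilon$-mutation is singled out geometrically only because of this canonical normalization, so the inductive hypothesis must be strengthened to include the invariant $|\mu_j|(\langle {\bf n}_j^{\s}, v_j \rangle - c_j^{\s}) = 1$ and carried through each mutation alongside the description of the rays.
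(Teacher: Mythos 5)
Your proposal is correct and takes essentially the same route as the paper: an induction over mutations in which the initial seed is handled by an explicit computation from Theorem~\ref{initpara} and the mutation step uses the $\epsilon$-mutation of the vertices from Lemma~\ref{epsilonmutation}. Your invariant $|\mu_j|\bigl(\langle {\bf n}_j^{\s}, v_j\rangle - c_j^{\s}\bigr) = 1$ is exactly the paper's normalization $\langle \bmu_j , {\bf N}_j^{\s}\rangle = 1$ of the unprojected facet normals (Lemmas~\ref{mutNvector} and~\ref{Nisn}) transported through the projection orthogonal to $\boldsymbol{\lambda}$, so the two arguments agree up to bookkeeping.
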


Let $\s$ be an arbitrary seed in $\A$. The simplex $\ds$ is of full dimension $N-1$ inside $\hyp$. Hence for every vertex $P_j$ of $\ds$, one can consider the facet $F_j$ of $\ds$ which does not contain the vertex $P_j$. For every $1 \leq j \leq N$, consider the linear hyperplane of $\mathbb{R}^N$ containing the points $P_i , i \neq j$. One considers the unique vector ${\bf N}_j^{\s}$ normal to this hyperplane such that  $\langle \bmu_j , {\bf N}_j^{\s}  \rangle = 1$ (recall that the $\bmu_j^{\s}$ are linearly independent by Lemma~\ref{free}).

 First we show that the vectors ${\bf N}_j^{\s}$ satisfy the suitable tropical mutation rule:
 
  \begin{lem} \label{mutNvector}
The vectors ${\bf N}'_j  , 1 \leq j \leq N$ are given by:
$$ {\bf N}'_j  = 
\begin{cases}
  - {\bf N}_j & \text{if $j=k$,} \\
 {\bf N}_j  + [\eta_k b_{jk}]_{+} {\bf N}_k &\text{otherwise.}
  \end{cases} $$
  \end{lem}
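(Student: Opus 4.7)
\medskip

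\noindent\textbf{Proof plan for Lemma~\ref{mutNvector}.} The strategy is to reinterpret the defining conditions of the vectors ${\bf N}_j^{\s}$ in purely algebraic terms, so that the desired mutation rule becomes a straightforward linear algebra computation using the formula of Lemma~\ref{epsilonmutation}. First I would observe that the linear hyperplane through the origin containing the points $P_i = \tfrac{1}{|x_i|} \bmu_i^{\s}$ for $i \neq j$ is exactly the linear span of the vectors $\{ \bmu_i^{\s} : i \neq j \}$, which is $(N-1)$-dimensional by Lemma~\ref{free}. Combined with the normalization $\langle \bmu_j^{\s}, {\bf N}_j^{\s} \rangle = 1$, this shows that the ${\bf N}_j^{\s}$ are characterized by the biorthogonality relations
\begin{equation*}
 \langle \bmu_i^{\s}, {\bf N}_j^{\s} \rangle = \delta_{ij} \qquad \text{for all } i,j \in J,
\end{equation*}
so that $({\bf N}_j^{\s})_j$ is precisely the dual basis of $(\bmu_i^{\s})_i$ with respect to the standard inner product on $\mathbb{R}^N$.

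Next, I would encode the mutation in the direction $k$ as a linear endomorphism. Let $E$ denote the $N \times N$ matrix whose columns are given by $E e_i = e_i$ for $i \neq k$ and $E e_k = -e_k + \sum_{i} [\eta_k b_{ik}]_{+} e_i$. By Lemma~\ref{epsilonmutation} together with the fact that $\bmu_i^{{\s}'} = \bmu_i^{\s}$ for $i \neq k$, the transition matrix from the basis $(\bmu_i^{\s})$ to $(\bmu_i^{{\s}'})$ is precisely $E$. The key observation is that $E$ is involutive: since $b_{kk}=0$, a direct computation of $E^{2} e_k$ shows that the contribution of $E e_k$ in column $k$ cancels, yielding $E^{2} = I$. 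This corresponds (and is dual) to the involutivity of cluster mutation.

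From the characterization in terms of dual bases, the transition matrix from $({\bf N}_j^{\s})$ to $({\bf N}_j^{{\s}'})$ must equal $(E^{-1})^{T} = E^{T}$. It only remains to read off the columns of $E^{T}$: for $j \neq k$, the $j$-th row of $E$ contains only $1$ in position $(j,j)$ and $[\eta_k b_{jk}]_{+}$ in position $(j,k)$, so that $E^{T} e_j = e_j + [\eta_k b_{jk}]_{+} e_k$, giving
\begin{equation*}
 {\bf N}_j^{{\s}'} = {\bf N}_j^{\s} + [\eta_k b_{jk}]_{+} {\bf N}_k^{\s}.
\end{equation*}
For $j=k$, the $k$-th row of $E$ only contains $-1$ in position $(k,k)$, yielding ${\bf N}_k^{{\s}'} = -{\bf N}_k^{\s}$, which matches the stated formula.

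The main conceptual step is the identification of ${\bf N}_j^{\s}$ as the dual basis of $(\bmu_i^{\s})_i$; once this is in place, the remainder is an essentially formal manipulation. The only subtle point to verify carefully is the involutivity $E^{2}=I$, which relies on the vanishing of the diagonal entry $b_{kk}$ of the exchange matrix, and is the structural reason why the tropical $\epsilon$-mutation for the ${\bf N}_j^{\s}$ takes the same shape as the one for the $\bmjh$ obtained in Lemma~\ref{tropical}.
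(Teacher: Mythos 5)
Your proposal is correct, and it reaches the mutation rule by a cleaner, more global route than the paper, although both arguments ultimately rest on the same two ingredients: the biorthogonality $\langle \bmu_i, {\bf N}_j^{\s} \rangle = \delta_{ij}$ (which, as you note, follows because the linear hyperplane through the $P_i$, $i \neq j$, is the span of the $\bmu_i$, $i \neq j$, these being linearly independent by Lemma~\ref{free}) and the mutation formula of Lemma~\ref{epsilonmutation} together with $\bmu_i^{\s'} = \bmu_i^{\s}$ for $i \neq k$. The paper argues vector by vector: it first uses the geometric fact that $\ds$ and $\Delta_{\s'}$ share the facet $F_k$ to get ${\bf N}'_k = c_k {\bf N}_k$, pins $c_k=-1$ by pairing with $\bmu'_k$, and then, for $j \neq k$, computes $\langle \bmu_k , {\bf N}'_j \rangle = [\eta_k b_{jk}]_{+}$ and deduces ${\bf N}'_j = c_j {\bf N}_j + [\eta_k b_{jk}]_{+}{\bf N}_k$ with $c_j = 1$ from the normalizations. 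You instead recognize $({\bf N}_j^{\s})_j$ as the dual basis of $(\bmu_i^{\s})_i$, encode the mutation by the elementary matrix $E$, check $E^2 = \mathrm{Id}$ (correctly using $b_{kk}=0$), and read the result off the inverse-transpose transformation law; this bypasses the shared-facet observation, makes the parallel with the involutivity of cluster mutation explicit, and in effect establishes the identity $^t\mathcal{N}_{\s}\mathcal{M}_{\s} = \mathrm{Id}_N$ that the paper only invokes later in the proof of Lemma~\ref{technical}. The trade-off is that the paper's computation stays closer to the geometry of the simplices, while yours packages the same content as a one-line change-of-basis statement; both are complete and correct.
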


 \begin{proof}
  The simplices $\ds$ and $\Delta_{\mathcal{S}'}$ share the facet $F_k$ consisting of the points $P_i, i \neq k$.  By definition, both ${\bf N}_k$ and ${\bf N}'_k$ are orthogonal to the linear hyperplane containing the points $P_i, i \neq k$. Hence ${\bf N}_k = c_k {\bf N}'_k$ for some (nonzero) real scalar $c_k$. Now,
 $$ 1 = \langle {\bmu}'_k, {\bf N}'_k \rangle
    = - \langle \bmu_k, {\bf N}'_k \rangle + \sum_{i, sgn(b_{ik})=\eta_k} b_{ik} \langle   \bmu_i, {\bf N}'_k \rangle  
    = - \langle  \bmu_k , {\bf N}'_k \rangle
    = - c_k $$
  which proves the mutation relation for ${\bf N}_k$. 
  
   Now let $j \neq k$. One has,
   \begin{align*}
  \langle  \bmu_k , {\bf N}'_j \rangle
   &= - \langle  {\bmu}'_k , {\bf N}'_j \rangle + \sum_{i} [\eta_k b_{ik}]_{+} \langle \bmu_i , {\bf N}'_j \rangle  \quad \text{by Lemma~\ref{epsilonmutation},} \\
   &= \sum_{i} [\eta_k b_{ik}]_{+} \langle \bmu_i , {\bf N}'_j \rangle  
   = [\eta_k b_{jk}]_{+} .   
   \end{align*}
 Thus the vectors ${\bf N}_j$ and ${\bf N}'_j - [\eta_k b_{jk}]_{+} {\bf N}_k$ are orthogonal to the linear hyperplane $\Vect_{\mathbb{R}}(\bmu_i , i \neq j)$.
    Hence one can write 
   $${\bf N}'_j = c_j {\bf N}_j + [\eta_k b_{jk}]_{+} {\bf N}_k$$ 
   for some (nonzero) real scalar $c_j$. Computing the scalar product of both hand sides with $\bmu_j$ gives $c_j = 1$ which finishes the proof. 
 \end{proof}
 
 Now we relate the ${\bf N}_j^{\s}$ to the ${\bf n}_j^{\s}$, beginning with the initial seed $\mathcal{S}^{\bf w}$. 

\begin{prop} \label{Nisninit}
Consider the seed $\mathcal{S}^{\bf w}$. Then for any $j \in J$ on has
 $$ {\bf n}_j^{0} = {\bf N}_j^{0} - \frac{\langle \boldsymbol{\lambda} , {\bf N}_j^{0}  \rangle}{||\boldsymbol{\lambda}||^{2}} \boldsymbol{\lambda} . $$
 \end{prop}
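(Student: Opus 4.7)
The plan is to compute both sides of the identity directly using Theorem~\ref{initpara}, which gives a very concrete description of the cluster variables of $\mathcal{S}^{\bf w}$. Observe first that the right-hand side is simply the orthogonal projection of ${\bf N}_j^{0}$ onto the direction-hyperplane $\boldsymbol{\lambda}^{\perp}$ of $\hyp$, so the content of the statement is that ${\bf n}_j^{0}$ admits such a description.

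Concretely, Theorem~\ref{initpara} yields
$$ \bmu_k := \Psi(x_k) = \varphi(\mu_k) = \sum_{l \in J_k} {\bf e}_l, $$
where $J_k = \{l \leq k \mid i_l = i_k\}$. By Corollary~\ref{invert} the family $(\bmu_k)_{k \in J}$ is a $\mathbb{Z}$-basis of $\zn$, so the vectors ${\bf N}_j^{0}$, characterised by $\langle \bmu_i, {\bf N}_j^{0} \rangle = \delta_{ij}$, are uniquely determined. I would then show
$$ {\bf N}_j^{0} =
\begin{cases} {\bf e}_j - {\bf e}_{j_+} & \text{if } j \in J_{ex}, \\
{\bf e}_j & \text{if } j \in J_{fr},
\end{cases}
$$
by evaluating $\langle \bmu_k, {\bf e}_j - {\bf e}_{j_+} \rangle = [j \in J_k] - [j_+ \in J_k]$. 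The key combinatorial point is that by the very definition of $j_+$ no index $k$ with $j < k < j_+$ satisfies $i_k = i_j$, and the two indicators agree precisely when $k < j$ or $k \geq j_+$; the only case where the difference equals $1$ is $k = j$. The case $j \in J_{fr}$ is analogous and even simpler since no position $k > j$ with $i_k = i_j$ exists.

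Once this explicit form of ${\bf N}_j^{0}$ is available, I evaluate $\langle \boldsymbol{\lambda}, {\bf N}_j^{0} \rangle$ using $\langle \boldsymbol{\lambda}, {\bf e}_l \rangle = \hgt(\beta_l)$. This produces $\hgt(\beta_j) - \hgt(\beta_{j_+})$ in the unfrozen case and $\hgt(\beta_j)$ in the frozen case, and substitution into the right-hand side of the claimed identity reproduces exactly the defining formula of ${\bf n}_j^{0}$ from Definition~\ref{defnvector}. The only delicate step is the combinatorial identification of ${\bf N}_j^{0}$; once that is in place, the remainder is a one-line calculation.
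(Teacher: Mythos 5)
Your proof is correct and takes essentially the same route as the paper: both arguments boil down to using Theorem~\ref{initpara} (i.e.\ $\Psi(x_k)=\sum_{l\in J_k}{\bf e}_l$) to identify ${\bf N}_j^{0}={\bf e}_j-{\bf e}_{j_{+}}$ for $j\in J_{ex}$ and ${\bf N}_j^{0}={\bf e}_j$ for $j\in J_{fr}$, and then substituting into the projection formula to recover Definition~\ref{defnvector}. The only difference is presentational: you verify the guessed dual-basis vectors against the characterization $\langle \Psi(x_i),{\bf N}_j^{0}\rangle=\delta_{ij}$ (licensed by Corollary~\ref{invert}), whereas the paper solves for the components of ${\bf N}_j^{0}$ by an induction on the orthogonality relations.
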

 
 \begin{proof}
For simplicity we write ${\bf N}_j$ (resp. ${\bf n}_j$) for ${\bf N}_j^{0}$ (resp. ${\bf n}_j^{0}$) throughout this proof. 
Recall from Section~\ref{seedsection} that we set $J_{k} := \{j \leq k | i_j=i_k \} = \{j_0=k > j_1 > \cdots > j_{r_k} \}$. 
 
 \smallskip
 
 {\bf First case: $j \in J_{ex}$.} First consider $k \in J \setminus J_{j}$. By Theorem~\ref{initpara}, the dominant word $\mu_k$ is the concatenation in the decreasing order of the good Lyndon words ${\bf i}_l , l \in J_{k} , l \leq k$. By definition, ${\bf N}_j$ is orthogonal to every $\bmu_i, i \neq j$. 
 In particular, it is orthogonal to $\bmu_{j_{r_k}} , \bmu_{j_{r_k -1}} , \ldots ,  \bmu_k$. Thus one has
  $$ 0 =  \langle \bmu_{j_{r_k}} , {\bf N}_j \rangle =  \langle {\bf e}_{j_{r_k}} , {\bf N}_j \rangle $$
  and 
 $$  0 =  \langle \bmu_{j_{r_k -1}} , {\bf N}_j \rangle =  \langle {\bf e}_{j_{r_k -1}} +   \bmu_{j_{r_k}} , {\bf N}_j \rangle =  \langle {\bf e}_{j_{r_k -1}} , {\bf N}_j \rangle .  $$
 By a straightforward induction, this implies that the $l$th component of ${\bf N}_j$ is zero for every $l \in J_{k}$. This holds for every $k \notin J_{j}$. 
 
 Similar arguments show that the $l$th component of ${\bf N}_j$ is zero for every $l \in J_{j}$ with $l<j$. By definition, one has $\langle \bmu_j , {\bf N}_j \rangle = 1$. Hence 
 $$ 1 =  \langle \bmu_{j} , {\bf N}_j \rangle  = \langle {\bf e}_{j} +   \bmu_{j_{-}} , {\bf N}_j \rangle =   \langle {\bf e}_{j} , {\bf N}_j \rangle $$
 and thus the $j$th component of ${\bf N}_j$ is equal to $1$. Now as $j$ is assumed to lie in $J_{ex}$, one has $j_{+} \leq N$ (see Section~\ref{remindKK}). Hence one can write 
 $$ \langle \bmu_{j_{+}} , {\bf N}_j \rangle = 0 \quad \text{with} \quad \mu_{j_{+}} = {\bf i}_{j_{+}} {\bf i}_{j} \cdots {\bf i}_{r_{j}} $$
 by Theorem~\ref{initpara}. Thus 
 $$ 0 = \langle \bmu_{j_{+}} , {\bf N}_j \rangle =  \langle {\bf e}_{j_{+}} +   \bmu_{j} , {\bf N}_j \rangle =  \langle  {\bf e}_{j_{+}} , {\bf N}_j \rangle + 1 $$
 by definition of ${\bf N}_j$. Hence the $j_{+}$th entry of ${\bf N}_j$ is $-1$. Then a straightforward induction similar to the first case shows that the $l$th component of ${\bf N}_j$ is zero if $l>j_{+}$. 
 
  Thus we have shown that ${\bf N}_j$ has exactly two non zero entries, namely the $j$th equal to $1$ and the $j_{+}$th equal to $-1$. Hence 
  $$ {\bf N}_j - \frac{\langle \boldsymbol{\lambda} , {\bf N}_j  \rangle}{||\boldsymbol{\lambda}||^{2}} \boldsymbol{\lambda} = {\bf e}_j - {\bf e}_{j_{+}} -  (\hgt(\beta_j) - \hgt(\beta_{j_{+}})) \frac{\boldsymbol{\lambda}}{||\boldsymbol{\lambda}||^{2}} . $$
   Comparing with Definition~\ref{defnvector}, we conclude that the desired statement holds.
  
  \smallskip
  
  {\bf Second case: $j \in J_{fr}$.} One shows as before that the $k$th entry of ${\bf N}_j$ is zero for $k \notin J_{j}$. In this case $J_{j}$ is exactly the set of all indices of occurrences of the letter $j$ in the chosen reduced expression of $w$. Writing  $\langle \bmu_{i} , {\bf N}_j \rangle = 0$ for every $i \in J_{j} \setminus \{j \}$ implies that all the entries of ${\bf N}_j$ are zero except the $j$th. This entry is equal to  $\langle \bmu_{j} , {\bf N}_j \rangle$ which is $1$ by definition. Hence ${\bf N}_j = {\bf e}_j$ for every $j \in J_{fr}$. This implies 
  $$ {\bf N}_j - \frac{\langle \boldsymbol{\lambda} , {\bf N}_j  \rangle}{||\boldsymbol{\lambda}||^{2}} \boldsymbol{\lambda}  =  {\bf e}_j - \hgt(\beta_j) \frac{\boldsymbol{\lambda}}{||\boldsymbol{\lambda}||^{2}} = {\bf n}_j $$
   for every $j \in J_{fr}$ which finishes the proof. 

 \end{proof}
 
 Now we show that the statement of Proposition~\ref{Nisninit} holds for every seed: 
 
  \begin{lem} \label{Nisn}
  Let $\s$ be any seed in $\A$, and let $j \in J$. Then one has 
  $$ {\bf n}_j^{\s} = {\bf N}_j^{\s} - \frac{\langle \boldsymbol{\lambda} , {\bf N}_j^{\s}  \rangle}{||\boldsymbol{\lambda}||^{2}} \boldsymbol{\lambda}  . $$
  \end{lem}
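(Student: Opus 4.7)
The plan is to proceed by induction on the minimal number of mutations needed to reach the seed $\s$ from the initial seed $\mathcal{S}^{\bf w}$. The base case, corresponding to $\s = \mathcal{S}^{\bf w}$, is precisely Proposition~\ref{Nisninit}, so only the inductive step requires work, and that step will turn out to be essentially automatic.

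Let $P \colon \mathbb{R}^N \longrightarrow \mathbb{R}^N$ denote the linear operator
$$ P({\bf v}) := {\bf v} - \frac{\langle \boldsymbol{\lambda}, {\bf v} \rangle}{||\boldsymbol{\lambda}||^{2}} \boldsymbol{\lambda}, $$
which is the orthogonal projection onto the linear hyperplane directing $\hyp$. The claim to prove reads $P({\bf N}_j^{\s}) = {\bf n}_j^{\s}$ for every seed $\s$ and every $j \in J$. Suppose $\s$ and $\s'$ are adjacent in the cluster exchange graph, related by a mutation in direction $k \in J_{ex}$, and assume the identity holds for $\s$. By Lemma~\ref{mutNvector}, the vectors ${\bf N}_j^{\s'}$ are obtained from the ${\bf N}_j^{\s}$ by tropical $\epsilon$-mutation with tropical sign $\eta_k$. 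By Definition~\ref{defnvector}, the ${\bf n}_j^{\s'}$ satisfy the very same mutation rule, with the very same sign $\eta_k$; crucially, this sign is determined by the seed $\s$ alone (through $\bmkh$) and is therefore common to both families.

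Since $P$ is linear and the coefficients $[\eta_k b_{jk}]_{+}$ depend only on $\s$, applying $P$ to the mutation formula for ${\bf N}_j^{\s'}$ gives
$$ P({\bf N}_j^{\s'}) = \begin{cases} -\, P({\bf N}_k^{\s}) & \text{if } j = k, \\ P({\bf N}_j^{\s}) + [\eta_k b_{jk}]_{+}\, P({\bf N}_k^{\s}) & \text{otherwise.} \end{cases} $$
The induction hypothesis replaces $P({\bf N}_i^{\s})$ by ${\bf n}_i^{\s}$ on the right hand side, and the result is exactly the recursion defining ${\bf n}_j^{\s'}$ in Definition~\ref{defnvector}. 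Hence $P({\bf N}_j^{\s'}) = {\bf n}_j^{\s'}$, which closes the induction.

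The only genuinely non-trivial input is the base case Proposition~\ref{Nisninit}; once that is granted, the inductive step is purely formal, resting on the linearity of $P$ and on the fact that the mutation rules for the two families share their sign and coefficients. In particular, no obstacle arises here, which is the main reason for having packaged Lemma~\ref{mutNvector} and Definition~\ref{defnvector} in parallel form.
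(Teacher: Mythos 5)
Your proof is correct and follows essentially the same route as the paper: the base case is Proposition~\ref{Nisninit}, and the inductive step observes that, by linearity of the projection onto the hyperplane direction, the projected vectors $P({\bf N}_j^{\s})$ obey the same tropical $\epsilon$-mutation rule (with the same sign $\eta_k$) as the ${\bf n}_j^{\s}$ of Definition~\ref{defnvector}, via Lemma~\ref{mutNvector}. Your write-up merely spells out in more detail what the paper states in three lines.
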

  
   \begin{proof}
   For the seed $\mathcal{S}^{\bf w}$, it follows from Proposition~\ref{Nisninit}. As the function $ \hgt(\cdot)$ is linear, Lemma~\ref{mutNvector} shows that the $\tilde{{\bf N}_j}^{\s}$ follows the same tropical mutation rule as the ${\bf n}_j$. Hence by induction the equality holds for every seed. 
   \end{proof}
   
  One can now finish the proof of Theorem~\ref{mainthm}:
 
  \begin{proof}[Proof of Theorem~\ref{mainthm}.]
 For any $j \in J$, the vector ${\bf N}_j^{\s}$ is orthogonal to every $\bmu_i$, $i \neq j$ hence to the vectors  $\frac{\bmu_p}{\hgt(\mu_p)} - \frac{\bmu_q}{\hgt(\mu_q)}$ for any $p,q \neq j$. These generate the underlying linear space of $F_j$ and hence ${\bf N}_j^{\s}$ is orthogonal to $F_j$. Moreover, the facet $F_j$ is contained in $\hyp$ for any $j \in J$. Hence $\boldsymbol{\lambda}$ is orthogonal to $F_j$. 
 The conclusion follows from Lemma~\ref{Nisn}.
 \end{proof}
 
 Let us finish this section by explaining why Theorem~\ref{mainthm} provides an explicit geometric realization of the cluster-theoretic dominance order (see Definition~\ref{defdom}). Fix a seed $\s =((x_1, \ldots , x_N) , B)$ in $\A$. The dominance order was introduced by F.Qin as a partial ordering on Laurent monomials in $x_1, \ldots , x_N$ in the study of common triangular bases for (quantum) cluster algebras. The vectors $\bmjh$ (see Section~\ref{epsilonmut}) were defined in \cite{Casbi} as a natural analog of this order at the level of parameters for simple modules in $\C$. More precisely let  $\overrightarrow{\mathcal{N}^{\s}}$ denote the linear convex cone generated by the $\bmjh , j \in J_{ex}$. For every simple object $M$ in $\C$,  let $\mathcal{N}_{M}^{\s}$ denote the affine cone with origin $\Psi([M])$ and direction $\overrightarrow{\mathcal{N}^{\s}}$; then one can see that the simple objects in $\C$ whose classes are smaller than $[M]$ for $\preccurlyeq_{\s}$ correspond to the integral points of $\mathcal{N}_{M}^{\s}$.  
 
 Theorem~\ref{mainthm} can now be reformulated as follows.
Consider the vector subspace ${\bf V}$ of $\mathbb{R}^N$ generated by the ${\bf n}_j^{0} , j \in J_{ex}$. By Remark~\ref{bmjhvect}, the $\bmjh , j \in J_{ex}$ form a basis of $\ker {\bf \wt}$ hence one can consider the unique linear map $T$ defined as
   $$ \begin{array}{cccc}
   T :  &\ker \left( {\bf \wt} \right) &\longrightarrow & {\bf V} \\
   {} &   \bmjh & \longmapsto &  {\bf n}_j^{0}
\end{array}  $$
for every $j \in J_{ex}$. 
  
  \begin{cor} \label{cordominance}
  The map $T$ is a linear isomorphism and for every seed $\s$, the image under $T$ of the cone $\overrightarrow{\mathcal{N}^{\s}}$ is the face ${\bf n}_j^{\s} , j \in J_{ex}$ of the normal fan of $\ds$.
   \end{cor}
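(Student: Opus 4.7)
The plan is to reduce the statement to the fact that the two families of vectors $(\bmjh^{\s})_{j \in J_{ex}}$ and $({\bf n}_j^{\s})_{j \in J_{ex}}$ satisfy the same tropical $\epsilon$-mutation, namely the one appearing in Lemma~\ref{tropical} and Definition~\ref{defnvector}. Once this is observed, the corollary will follow from a straightforward induction on the mutation distance from the initial seed $\mathcal{S}^{{\bf w}_{<}}$.

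First I would check that $T$ is well defined as a linear isomorphism $\ker({\bf wt}) \xrightarrow{\sim} {\bf V}$. Well-definedness follows from Remark~\ref{bmjhvect}, which asserts that $(\bmjh^{0})_{j \in J_{ex}}$ is a basis of $\ker({\bf wt})$; in particular the domain and the codomain have the same dimension $\sharp J_{ex}$. For injectivity one needs to know that the $\sharp J_{ex}$ vectors $({\bf n}_j^{0})_{j \in J_{ex}}$ are linearly independent. This follows from Theorem~\ref{mainthm}: since the $({\bf n}_j^{0})_{j \in J}$ are the rays of the normal fan of the $(N-1)$-simplex $\Delta_{\mathcal{S}^{{\bf w}_{<}}}$ sitting inside the hyperplane $\boldsymbol{\lambda}^{\perp}$ of dimension $N-1$ (thanks to Lemma~\ref{Nisn}), any proper subset of these $N$ rays is linearly independent, and $\sharp J_{ex} = N - \sharp I < N$.

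Next I would prove by induction on the mutation sequence reaching a seed $\s$ from $\mathcal{S}^{{\bf w}_{<}}$ that $T(\bmjh^{\s}) = {\bf n}_j^{\s}$ for every $j \in J_{ex}$. The base case is the definition of $T$. For the inductive step, comparing Lemma~\ref{tropical} and Definition~\ref{defnvector} shows that for a mutation in direction $k \in J_{ex}$ the update of $\bmjh^{\s}$ and of ${\bf n}_j^{\s}$ is governed by the \emph{same} formula, with the \emph{same} tropical sign $\eta_k \in \{\pm 1\}$, because $\eta_k$ is defined intrinsically from $\bmkh^{\s}$ in Section~\ref{epsilonmut} and Definition~\ref{defnvector} uses exactly that sign. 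Applying the linear map $T$ to the mutation rule for $\bmjh^{\s}$ and invoking the inductive hypothesis yields the mutation rule for ${\bf n}_j^{\s}$. Crucially, these updates only involve the vector $\bmkh^{\s}$ with $k \in J_{ex}$, so the induction stays inside the domain and image of $T$.

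Once $T(\bmjh^{\s}) = {\bf n}_j^{\s}$ holds for every seed $\s$ and every $j \in J_{ex}$, the linearity of $T$ implies that $T(\overrightarrow{\mathcal{N}^{\s}})$ is the positive cone generated by $({\bf n}_j^{\s})_{j \in J_{ex}}$; by Theorem~\ref{mainthm} this is precisely the face of the normal fan of $\ds$ opposite to the rays indexed by $J_{fr}$. I do not expect any real obstacle: the matching tropical mutations make the argument essentially formal, the only point requiring a bit of care being the linear independence used for injectivity of $T$.
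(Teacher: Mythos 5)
Your proposal is correct and follows essentially the same route as the paper: the paper's proof likewise observes that the $\bmjh$ and the ${\bf n}_j$ obey the same tropical $\epsilon$-mutation (Lemma~\ref{tropical} versus Definition~\ref{defnvector}), deduces $T\bmjh^{\s}={\bf n}_j^{\s}$ for every seed by induction, and concludes via Theorem~\ref{mainthm}. Your extra justification of the injectivity of $T$ (linear independence of the ${\bf n}_j^{0}$, $j\in J_{ex}$) is a welcome detail that the paper leaves implicit.
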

  
\begin{proof}
  The vectors ${\bf n}_j^{0} , j \in J_{ex}$ form a basis of ${\bf V}$ hence $T$ is an isomorphism. 
  By Lemma~\ref{tropical}, the vectors $\bmjh$ follow the same tropical mutation rule as the ${\bf n}_j$. Hence one has 
 $$  \forall j \in J_{ex}, T \bmjh = {\bf n}_j $$
 for every seed $\s$. This is the desired statement by Theorem~\ref{mainthm}. 
  \end{proof}

\begin{rk}
Definition~\ref{defnvector} allows us to get an explicit description  of the subspace ${\bf V}$. This subspace essentially describes the  exchange part of the cluster algebra $\A$. For instance for $w=w_0$ and ${\bf w} = (1,2,1,3,2,1,\ldots , n , \ldots 1)$ in type $A_n$, {{\cite[Theorem 6.1]{Casbi}}} implies that $\hgt(\beta_j) = \hgt(\beta_{j_{+}})$ for every $j \in J_{ex}$. Hence for every $j \in J_{ex}$, the vector ${\bf n}_j^{0} , j \in J_{ex}$ is simply ${\bf e}_j - {\bf e}_{j_{+}}$. Let $M_1, \ldots , M_n$ denote the simple modules corresponding to the frozen variables in $\A$.  Then using Theorem~\ref{initpara}, one can check that in this case ${\bf V}$ is exactly the orthogonal of the vector subspace generated by $\Psi([M_1]) , \ldots , \Psi([M_n])$. 
\end{rk} 
  
   \begin{ex}
   Figure~\ref{babyex} shows the normal vectors to each facet of the simplices $\ds$ and $\Delta_{\s'}$. In this case there is only one direction of mutation, and $\mathbf{n}_1$ (resp. $\mathbf{n}'_1$) corresponds to $\hat{\boldsymbol{\mu}_1}$ for the seed $\s$ (resp. $\s'$) up to some non zero scalar. 
    \end{ex}

\section{Towards colored hook formulas}
 \label{hooksection}

In this section we focus on the case where $\Aqnw$ is a cluster algebra of finite type, i.e. there is a finite number of seeds. We obtain an equality between rational functions involving the weights of the simple modules corresponding to the cluster variables of $\Aqnw$. 
As a consequence, we get a cluster-theoretic formula for the quantity $\frac{N!}{\prod_{\beta \in \Phi_{+}^{w}} \hgt(\beta)}$. This quantity has a well-known significance in combinatorics and Lie theory: Peterson-Proctor related this quantity to the combinatorics of $d$-complete posets (see \cite{P1,P2}) . Under some technical assumption on $w$ ($w$ is assumed to be \textit{dominant minuscule} in the terminology of \cite{Stem}) they prove that this quantity is exactly the number of reduced expressions of $w$. This Peterson-Proctor hook formula is also related to the dimension of certain remarkable simple representations of quiver Hecke algebras constructed by Kleshchev-Ram, see {{\cite[Theorem 3.10]{KRhom}}}.

 In \cite{Nakada}, Nakada proposed a generalization of the Peterson-Proctor hook formula. Recall from Section~\ref{remindKLR} that $\alpha_1, \ldots , \alpha_n$ stand for the simple roots of $\mathfrak{g}$. One considers the $\alpha_i, 1 \leq i \leq n$ as formal variables and we let $\mathbb{L}$ denote the field $\mathbb{C}(\alpha_1, \ldots, \alpha_n)$. For every $\beta = \sum_i a_i \alpha_i \in Q_{+}$, one associates a formal rational function 
$$ \frac{1}{\beta} := \frac{1}{a_1 \alpha_1 + \cdots + a_n \alpha_n} \in \mathbb{L}. $$  
Specializing the $\alpha_i$ to $1$, the value of this rational function is exactly $\frac{1}{\hgt(\beta)}$. 
Then Nakada proves the following \textit{colored hook formula}:

\begin{thm}[{{\cite[Corollary 7.2]{Nakada}}}] \label{HookNakada}
 Assume $w$ is a dominant minuscule element of $W$ in the terminology of \cite{P1,P2,Stem}. Recall that $N$ denotes the length of $w$. Then the following equality holds in $\mathbb{L}$:
  \begin{equation} \label{eqNakada}
   \prod_{\beta \in \Phi_{+}^{w}} \frac{1}{\beta} 
 =  \sum_{(i_1, \ldots , i_N) \in \text{MPath(w)}} \frac{1}{\alpha_{i_1}} \frac{1}{\alpha_{i_1} + \alpha_{i_2}} \cdots \frac{1}{\alpha_{i_1} + \alpha_{i_2} + \cdots + \alpha_{i_N}}  
   \end{equation} 
 \end{thm}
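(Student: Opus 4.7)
The statement is cited verbatim from Nakada \cite{Nakada}, so a faithful proof would reproduce his combinatorial argument on $d$-complete posets. However, in the context of this paper the most natural strategy is to deduce Theorem~\ref{HookNakada} as a consequence of the author's Theorem~\ref{prophook}. The first step is to show that when $w$ is dominant minuscule, the cluster algebra $\Aqnw$ is of finite type, so that Theorem~\ref{prophook} applies and yields
\[ \prod_{\beta \in \Phi_{+}^{w}} \frac{1}{\beta} \;=\; \sum_{\s} \prod_{1 \leq j \leq N} \frac{1}{\beta_j^{\s}}. \]
This finiteness should follow from the fact that dominant minuscule elements are fully commutative, so their heap is a distributive lattice and the exchange quivers are of acyclic simply-laced type.

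The second step is to identify $\mathrm{MPath}(w)$ with the set of seeds of $\Aqnw$ in such a way that, under the bijection, the product $\prod_j \frac{1}{\beta_j^{\s}}$ matches the summand $\frac{1}{\alpha_{i_1}} \cdots \frac{1}{\alpha_{i_1} + \cdots + \alpha_{i_N}}$ indexed by $(i_1,\dots,i_N)$. The initial case is handled by Theorem~\ref{initpara}: for the seed $\mathcal{S}^{{\bf w}_{<}}$ the $k$th cluster variable has weight $\wt(\mu_k) = \wt({\bf i}_{j_0} \cdots {\bf i}_{j_{r_k}})$, and in the fully commutative regime this telescopes to a partial sum of simple roots along the chosen reduced expression, matching the factor $\alpha_{i_1} + \cdots + \alpha_{i_k}$ of Nakada's formula. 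Cluster mutation should then correspond to a local move on reduced expressions, so that the weights $\beta_j^{\s}$ transform compatibly with $\mathrm{MPath}$.

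An alternative direct approach is induction on $N = l(w)$. The base $N=1$ is trivial. For the inductive step, one uses that dominant minuscule $w$ admits a peripheral simple reflection $s_i$ such that $w = w' s_i$ with $w'$ still dominant minuscule of length $N-1$. A partial fraction identity in the formal variable $\alpha_i$ relates $\prod_{\beta \in \Phi_{+}^{w}} \frac{1}{\beta}$ to $\prod_{\beta' \in \Phi_{+}^{w'}} \frac{1}{\beta'}$ and correction terms indexed by the remaining peripheral reflections, which matches the recursive structure of $\mathrm{MPath}(w)$ as paths in the heap poset of $w$.

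The main obstacle in either route is combinatorial rather than algebraic: one must match the summands of Nakada's formula, which are indexed by paths in the heap of $w$, with either seeds of $\Aqnw$ or recursive contributions coming from peeling off a peripheral letter of a reduced expression. The cluster-theoretic route has the additional difficulty that the number of seeds of $\Aqnw$ need not equal $\#\mathrm{MPath}(w)$ in general, so a seed-by-seed bijection may not exist and one would have to identify equal sub-sums on the two sides.
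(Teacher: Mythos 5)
There is a genuine gap, and in fact the paper offers no internal proof to compare against: Theorem~\ref{HookNakada} is quoted verbatim as Nakada's Corollary~7.2 and is used as an external input, so any proof must either reproduce Nakada's combinatorial argument or supply a genuinely new one. Your first route does neither without major missing pieces. It begins by assuming that dominant minuscule $w$ forces $\Aqnw$ to be of finite cluster type; in the paper this is precisely Conjecture~\ref{conj}, stated as open, and your one-line justification (heap a distributive lattice, hence acyclic simply-laced exchange quivers) is not an argument. More seriously, even granting finiteness so that Theorem~\ref{prophook} applies, the proposed seed-by-seed matching with $\mathrm{MPath}(w)$ fails: the paper's own $A_3$ example with $w=s_2s_1s_3s_2$ shows that the initial seed contributes the summand $\frac{1}{\alpha_2}\frac{1}{\alpha_1+\alpha_2}\frac{1}{\alpha_2+\alpha_3}\frac{1}{\alpha_1+2\alpha_2+\alpha_3}$, which is not equal to either of Nakada's two summands in Equation~\eqref{exhook}, and the paper remarks explicitly that the two sums do not even have the same number of terms in general (upon specializing $\alpha_i\mapsto 1$, Nakada's terms are all $1/N!$ while the cluster terms are the distinct volumes of the $\ds$). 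Your claim that Theorem~\ref{initpara} makes $\wt(\mu_k)$ ``telescope'' to the partial sum $\alpha_{i_1}+\cdots+\alpha_{i_k}$ is false already here: $\wt(\mu_3)=\alpha_2+\alpha_3\neq\alpha_1+\alpha_2+\alpha_3$. Once the termwise identification is abandoned, the only way to equate the two sums is to know that each equals $\prod_{\beta\in\Phi_+^w}\frac{1}{\beta}$, and for Nakada's side that is exactly the statement to be proved, so this route is circular.

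Your alternative route (induction on $l(w)$ by peeling off a peripheral letter of the heap) is the right general shape of Nakada's own argument, but as written it contains no content: the ``partial fraction identity'' relating $\prod_{\beta\in\Phi_+^{w}}\frac{1}{\beta}$ to $\prod_{\beta'\in\Phi_+^{w'}}\frac{1}{\beta'}$ plus correction terms is neither stated nor proved, and establishing it (together with the compatibility of $\Phi_+^{w}$ with the heap recursion for dominant minuscule $w$) is the entire difficulty of the theorem. To make this a proof you would need to write down that identity precisely and verify it, or simply defer to Nakada's paper as the present paper does.
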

 
  The set $\text{MPath}(w)$ is a finite set in bijection with the set of all reduced expressions of $w$. We refer to {{\cite[Sections 2,7]{Nakada}}} for more details. Every term of the sum in the right hand side of Equation~\eqref{eqNakada} is equal to $1/N!$ when specializing the $\alpha_i$ to $1$. Hence as an immediate consequence of this result, one gets that the cardinal of $\text{MPath}(w)$ coincides with the Peterson-Proctor hook formula 
 $$ \sharp \text{MPath}(w) = \frac{N!}{\prod_{\beta \in \Phi_{+}^{w}} \hgt(\beta)} . $$
  
  \begin{rk}
  In fact, the main result of \cite{Nakada} expresses the rational function $\prod_{\beta \in \Phi_{+}^{w}} \left( 1 + \frac{1}{\beta} \right)$ as a sum of rational functions of the form 
  $$\frac{1}{\alpha_{i_1}} \frac{1}{\alpha_{i_1} + \alpha_{i_2}} \cdots \frac{1}{\alpha_{i_1} + \alpha_{i_2} + \cdots + \alpha_{i_l}} $$
   with $l \leq N$, where the tuples $(i_1, \ldots , i_l)$ run over a set $\text{Path}(w)$ strictly containing $\text{MPath}(w)$. The equality given by Theorem~\ref{HookNakada} is obtained by considering the terms of lowest degree. 
  \end{rk}
  
For every seed $\s = ((x_1^{\s}, \ldots , x_N^{\s}),B^{\s})$ in $\Aqnw$ and any $1 \leq j \leq N$, consider the unique dominant word $\mu_j^{\s}$ such that $x_j^{\s}= [L(\mu_j^{\s})]$. We write the weight of $\mu_j^{\s}$ as $\wt(\mu_j^{\s}) =  \sum_i a_{i,j}^{\s} \alpha_i$ (see Section~\ref{remindKLR}). Then mimicking \cite{Nakada}, one considers the rational function 
  $$ \frac{1}{\wt(\mu_j^{\s})} := \frac{1}{a_{1,j}^{\s} \alpha_1 + \cdots + a_{n,j}^{\s} \alpha_n} \in \mathbb{L}. $$
We can now state the main result of this section. 

\begin{thm} \label{prophook}
Assume $w \in W$ is such that the cluster algebra $\Aqnw$ is of finite type. Then the following equality holds in $\mathbb{L}$: 
\begin{equation} \label{eqprophook}
 \prod_{\beta \in \Phi_{+}^{w}} \frac{1}{\beta} = \sum_{\s} \prod_{1 \leq j \leq N} \frac{1}{\wt(\mu_j^{\s})} . 
 \end{equation}
\end{thm}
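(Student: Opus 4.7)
The plan is to realize both sides of \eqref{eqprophook} as Laplace transforms over a tiling of the positive orthant $\mathbb{R}_{+}^{N}$ by simplicial cones, one cone per seed of $\Aqnw$.

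First I would establish that when $\Aqnw$ is of finite type, the simplices $\ds$ tile $\Delta(\Aqnw)$: disjointness of interiors is Corollary~\ref{interior}, while for the covering I would argue that in finite type every simple object of $\Cw$ arises as a monoidal cluster monomial for some seed, so Proposition~\ref{propds} implies that every rational point of $\Delta(\Aqnw)$ lies in some $\ds$. Since rational points are dense in $\Delta(\Aqnw)$ and each $\ds$ is closed, the union covers $\Delta(\Aqnw)$. Coning from the origin turns this into a tiling of $\mathbb{R}_{+}^{N}$ by the simplicial cones $\mathcal{K}_{\s} := \mathbb{R}_{\geq 0}\Psi(x_1^{\s}) + \cdots + \mathbb{R}_{\geq 0}\Psi(x_N^{\s})$; note that the ambient cone $\mathbb{R}_{+}^{N}$ is itself simplicial with edge generators ${\bf e}_1, \ldots, {\bf e}_N$.

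The main computational tool is the Laplace transform of a simplicial cone: for linearly independent vectors $v_1, \ldots, v_N \in \mathbb{R}^N$ and any $\xi \in \mathbb{R}^N$ with $\langle \xi, v_j\rangle > 0$ for all $j$,
$$\int_{\mathbb{R}_{\geq 0} v_1 + \cdots + \mathbb{R}_{\geq 0} v_N} e^{-\langle \xi, x\rangle}\, dx = \frac{|\det(v_1, \ldots, v_N)|}{\prod_{j=1}^{N} \langle \xi, v_j\rangle}.$$
I would apply this with $\xi = (\beta_1(\alpha), \ldots, \beta_N(\alpha))$ for positive real values of the $\alpha_i$, then interpret the resulting equality of positive reals as an equality of rational functions in $\mathbb{L}$. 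Over the ambient cone $\mathbb{R}_{+}^{N}$ the right-hand side reads $\prod_{k=1}^{N} \frac{1}{\beta_k} = \prod_{\beta \in \Phi_{+}^{w}} \frac{1}{\beta}$. Over $\mathcal{K}_{\s}$ the determinant equals $|\det \mathcal{M}_{\s}| = 1$ by Corollary~\ref{invert}, while the key computation
$$ \langle \xi, \Psi(x_j^{\s})\rangle \;=\; \sum_{k=1}^{N} c_{k,j}^{\s}\, \beta_k \;=\; \wt(\mu_j^{\s}) $$
follows from the canonical factorization $\mu_j^{\s} = ({\bf i}_N)^{c_{N,j}^{\s}} \cdots ({\bf i}_1)^{c_{1,j}^{\s}}$ together with $\wt({\bf i}_k) = \beta_k$ (Theorem~\ref{initpara} provides the explicit form of this factorization in the initial seed, and it holds in any seed by construction of $\Psi$). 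Additivity of the integral over the tiling then yields exactly \eqref{eqprophook}.

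The main obstacle I anticipate is the covering part of the tiling property: one must show that in the finite-type case every simple object of $\Cw$ occurs as a monoidal cluster monomial for \emph{some} seed. I expect this to follow from the fact that cluster monomials form a basis of any finite-type cluster algebra, combined with the Kang-Kashiwara-Kim-Oh monoidal categorification, which matches cluster monomials to classes of simple objects; density of the corresponding rational points $\frac{1}{|M|}\Psi([M])$ in $\Delta(\Aqnw)$ then closes the geometric argument. Everything else is standard convex-geometric bookkeeping.
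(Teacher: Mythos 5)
Your proposal is correct and follows essentially the same route as the paper: the paper's proof also tiles the positive orthant by the cones over the simplices $\ds$ (one per seed, using that in finite cluster type every simple object is a monoidal cluster monomial), computes the Laplace transform of each cone with $|\det \mathcal{M}_{\s}|=1$ from Corollary~\ref{invert}, and identifies the pairings $\langle \boldsymbol{\beta}, \Psi(x_j^{\s})\rangle = \wt(\mu_j^{\s})$ exactly as you do (Lemma~\ref{technical}). The only cosmetic difference is that you quote the closed-form Laplace transform of a simplicial cone directly, whereas the paper rederives it by a change of variables involving the normal vectors ${\bf N}_k^{\s}$.
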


We fix $w \in W$ and we write as in Section~\ref{remindKK} $\Phi_{+}^{w} = \{ \beta_1 < \cdots < \beta_N \}$. We identify positive roots in $\Phi_{+}^{w}$ with elements of $\mathbb{C}[\alpha_1, \ldots , \alpha_n]$ in a natural way and we let $\boldsymbol{\beta}$ denote the vector of $\mathbb{L}^N$ whose entries are $\beta_1, \ldots , \beta_N$. For any seed $\s$ and any $1 \leq j \leq N$, we also set $\beta_j^{\s} := \wt(\mu_j^{\s})$ and $\boldsymbol{\beta}^{\s} := (\beta_1^{\s}, \ldots , \beta_N^{\s}) \in \mathbb{L}^N$.
 We begin with the following lemma:
 
  \begin{lem} \label{technical}
 For any seed $\s$ one has:
 $$ \frac{1}{\beta_1^{\s} \cdots \beta_N^{\s}}  =  \int_{C_{\s}} e^{-(\beta_1y_1 + \cdots + \beta_Ny_N)} dy_1 \cdots dy_N . $$
   \end{lem}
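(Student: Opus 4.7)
The plan is to identify $C_{\s}$ as the simplicial cone
$$ C_{\s} := \left\{ \sum_{i=1}^{N} t_i\, \Psi(x_i^{\s}) \ \Big|\ t_1, \ldots, t_N \geq 0 \right\} \subset \mathbb{R}^N $$
generated by the images under $\Psi$ of the cluster variables of $\s$, and then reduce the integral to a product of one-dimensional integrals by a linear change of variables. As usual, the equality is interpreted either formally in $\mathbb{L}$, or after specializing the $\alpha_i$ to positive real numbers so that every $\beta_j^{\s}$ is positive and convergence holds; the formal and analytic interpretations agree since both sides are rational in $\alpha_1, \ldots, \alpha_n$.

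The key representation-theoretic input is the identity $\langle \boldsymbol{\beta}, \Psi(x_j^{\s}) \rangle = \beta_j^{\s}$ for every $j$. To see this, write the canonical factorization of $\mu_j^{\s}$ as $\mu_j^{\s} = (\mathbf{i}_N)^{c_{N,j}} \cdots (\mathbf{i}_1^{\s})^{c_{1,j}}$, so that by definition $\Psi(x_j^{\s}) = {}^{t}(c_{1,j}, \ldots, c_{N,j})$. Since $\wt(\mathbf{i}_k) = \beta_k$ and the weight map is additive, I get
$$ \beta_j^{\s} = \wt(\mu_j^{\s}) = \sum_{k=1}^N c_{k,j}\, \beta_k = \langle \boldsymbol{\beta}, \Psi(x_j^{\s}) \rangle . $$

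Now I perform the linear change of variables $y = \sum_{i} t_i\, \Psi(x_i^{\s})$, which maps $\mathbb{R}_{\geq 0}^N$ bijectively onto $C_{\s}$ because the vectors $\Psi(x_i^{\s})$ are linearly independent by Corollary~\ref{invert}. The Jacobian of this transformation is $|\det \mathcal{M}_{\s}|$, and Corollary~\ref{invert} gives $\mathcal{M}_{\s} \in GL_N(\mathbb{Z})$ with $\det \mathcal{M}_{\s} = \pm 1$, so this Jacobian equals $1$. Combining with the previous identity,
$$ \int_{C_{\s}} e^{-\langle \boldsymbol{\beta}, y \rangle}\, dy = \int_{\mathbb{R}_{\geq 0}^N} e^{-\sum_{i} t_i\, \beta_i^{\s}}\, dt_1 \cdots dt_N = \prod_{i=1}^{N} \int_0^{+\infty} e^{-t_i \beta_i^{\s}}\, dt_i = \prod_{i=1}^N \frac{1}{\beta_i^{\s}}, $$
which is the claimed formula.

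There is no real obstacle here: the statement is essentially a bookkeeping lemma. The only subtle point is the justification that the Jacobian is exactly $\pm 1$, which is precisely the content of Corollary~\ref{invert} and the reason why the determinantial module description from Theorem~\ref{initpara} is needed. Proving Theorem~\ref{prophook} from this lemma will then amount to showing that the cones $C_{\s}$ tile, up to measure zero, the positive cone generated by the standard basis (equivalently, that the simplices $\ds$ cover $\Delta(\Aqnw)$ in the finite type case), and summing the integrals; that tiling argument is the geometric heart of the theorem, but it comes after the lemma.
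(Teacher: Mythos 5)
Your proof is correct and follows essentially the same route as the paper: the paper performs the inverse change of variables (from the positive orthant to $C_{\s}$ via the matrix of normal vectors $\mathcal{N}_{\s}$ with $^t\mathcal{N}_{\s}\mathcal{M}_{\s}=Id$), but this is the same computation as yours, resting on the same two inputs, namely $|\det \mathcal{M}_{\s}|=1$ from Corollary~\ref{invert} and the identity $\langle \boldsymbol{\beta}, \Psi(x_j^{\s})\rangle = \wt(\mu_j^{\s}) = \beta_j^{\s}$. Your identification of $C_{\s}$ as the cone generated by the $\Psi(x_i^{\s})$ agrees with the paper's description of it as the cone over $\ds$, so there is no gap.
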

   
   \begin{proof}
   Let $C_{\s}$ be the open linear cone of $\mathbb{R}^N$ whose intersection with $\hyp$ is $\ds$. With the notations of Section~\ref{normalfan}, one has 
$C_{\s} = \bigcap_{1 \leq k \leq N} \{ \langle {\bf N}_k^{\s} , \cdot \rangle \geq 0 \}$.
   Let $\mathcal{N}_{\s}$ denote the $N \times N$ matrix whose columns are the ${\bf N}_k^{\s}, 1 \leq k \leq N$. By definition of the ${\bf N}_k^{\s}$ one has $~^t\mathcal{N}_{\s} \mathcal{M}_{\s} = Id_N$. Hence one has 
\begin{align*}
 \frac{1}{\beta_1^{\s} \cdots \beta_N^{\s}} 
& = \int_{{\mathbb{R}_{+}^{*}}^N} e^{-(\beta_1^{\s}x_1 + \cdots + \beta_N^{\s}x_N)} dx_1 \cdots dx_N \\
 & = \int_{C_{\s}} |\det(\mathcal{N}_{\s})|  e^{-(\beta_1^{\s} (~^t\mathcal{N}_{\s}y)_1 + \cdots + \beta_N^{\s}(~^t\mathcal{N}_{\s}y)_N)} dy_1 \cdots dy_N . 
 \end{align*}
 By Corollary~\ref{invert}, $| \det \left( \mathcal{M}_{\s} \right) | = 1$ and hence $| \det \left( \mathcal{N}_{\s} \right) | = 1$ as well. 
 Then for every $1 \leq j \leq N$ one has 
 $$ (~^t\mathcal{N}_{\s}y)_j = \sum_i (\mathcal{N}_{\s})_{ij} y_i $$
 and hence 
 $$ \sum_j \beta_j^{\s} (~^t\mathcal{N}_{\s}y)_j  = \sum _i \left( \sum_j  (\mathcal{N}_{\s})_{ij} \beta_j^{\s} \right) y_i 
  = \sum_i \left( \mathcal{N}_{\s} \boldsymbol{\beta}^{\s} \right)_i y_i 
  = \sum_i \left( ~^t \mathcal{M}_{\s}^{-1} \boldsymbol{\beta}^{\s} \right)_i y_i  . $$
  Then it suffices to note that for any $j$ one has 
  $$ \left( ~^t \mathcal{M}_{\s} \boldsymbol{\beta} \right)_j 
  = \langle ~^t \mathcal{M}_{\s} \boldsymbol{\beta} , {\bf e}_j \rangle
  = \langle \boldsymbol{\beta} , \bmu_j^{\s} \rangle
  = \wt ( \mu_j^{\s} )
  = \beta_j^{\s} . $$
  Thus we have proven that 
  $$ \frac{1}{\beta_1^{\s} \cdots \beta_N^{\s}}  = \int_{C_{\s}} e^{-(\beta_1y_1 + \cdots + \beta_Ny_N)} dy_1 \cdots dy_N. $$
  We conclude by performing the change of variables $ \mathbb{R}_{+}^{*} \times \ds \longrightarrow C_{\s}$ given by $(r,{\bf y}) \longmapsto r{\bf y}$. 
   \end{proof}

 \begin{proof}[Proof of Theorem~\ref{prophook}]
  If $\Aqnw$ is a cluster algebra of finite type, then all the simple objects in $\Cw$ are cluster monomials. Hence the union of the cones $C_{\s}$ for all the seeds in $\Aqnw$ is equal to the whole positive orthant ${\mathbb{R}_{+}^{*}}^N$ (up to some set of zero measure). Hence one has
\begin{align*}
  \prod_{\beta \in \Phi_{+}^{w}} \frac{1}{\beta}
  &= \int_{{\mathbb{R}_{+}^{*}}^N} e^{-(\beta_1x_1 + \cdots + \beta_Nx_N)} dx_1 \cdots dx_N \\
  &= \sum_{\s} \int_{C_{\s}} e^{-(\beta_1x_1 + \cdots + \beta_Nx_N)} dx_1 \cdots dx_N = \sum_{\s} \frac{1}{\beta_1^{\s} \cdots \beta_N^{\s}} .
  \end{align*}
  \end{proof}

 One can also state another consequence of Lemma~\ref{technical}:
 
  \begin{cor}
  Let $\s$ be any seed in $\Aqnw$. The volume of the simplex $\ds$ is given by
   $$ Vol \left( \ds \right) = \frac{1}{\prod_{1 \leq j \leq N} |\mu_j^{\s}|} . $$
   \end{cor}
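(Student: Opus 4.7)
The corollary should follow directly from Lemma~\ref{technical} by specialization. The plan is to substitute $\alpha_i = 1$ for every $i$ in the identity
$$ \frac{1}{\beta_1^{\s} \cdots \beta_N^{\s}} = \int_{C_{\s}} e^{-(\beta_1 y_1 + \cdots + \beta_N y_N)} dy_1 \cdots dy_N $$
of Lemma~\ref{technical}, viewed as an equality of rational functions in $\mathbb{L} = \mathbb{C}(\alpha_1,\ldots,\alpha_n)$. Under this evaluation each positive root $\beta_j$ collapses to $\hgt(\beta_j) = \langle \boldsymbol{\lambda}, {\bf e}_j \rangle$, so the exponent in the integrand becomes $\langle \boldsymbol{\lambda}, {\bf y}\rangle$, while $\beta_j^{\s} = \wt(\mu_j^{\s})$ becomes $\hgt(\wt(\mu_j^{\s})) = |\mu_j^{\s}|$. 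The specialized identity reads
$$ \frac{1}{\prod_j |\mu_j^{\s}|} = \int_{C_{\s}} e^{-\langle \boldsymbol{\lambda}, {\bf y} \rangle} d{\bf y}. $$

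To identify the right-hand side with $Vol(\ds)$, I would apply the same radial change of variables ${\bf y} = r{\bf z}$ (with $r > 0$ and ${\bf z} \in \ds$) used in the proof of Lemma~\ref{technical}. The $r$-integral factors as $\int_0^\infty r^{N-1} e^{-r} dr = (N-1)!$, producing a fixed scalar multiple of the $(N-1)$-dimensional Euclidean volume of $\ds$. This scalar matches precisely the normalization of volume implicit in the statement: writing $P_j := \frac{1}{|\mu_j^{\s}|} \Psi(x_j^{\s})$ for the vertices of $\ds$, the relevant convention is $Vol(\ds) = |\det(P_1, \ldots, P_N)|$. This is consistent with the remark in the introduction that $Vol(\Delta(\Aqnw))$ coincides with the Peterson--Proctor hook formula up to the factor $N!$.

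No genuine obstacle arises; the only mildly subtle point is making the volume normalization explicit, which is forced by the shape of Lemma~\ref{technical}. As a consistency check independent of the integral computation, one can recover the formula directly from Corollary~\ref{invert}: the images $\Psi(x_1^{\s}),\ldots,\Psi(x_N^{\s})$ form the columns of $\mathcal{M}_{\s} \in GL_N(\mathbb{Z})$, so $|\det(\Psi(x_1^{\s}),\ldots,\Psi(x_N^{\s}))| = 1$, and pulling out the scalars yields $|\det(P_1,\ldots,P_N)| = 1/\prod_j |\mu_j^{\s}|$, confirming the stated formula.
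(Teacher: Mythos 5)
Your argument is correct and follows essentially the same route as the paper: specialize the identity of Lemma~\ref{technical} at $\alpha_i=1$ and perform the radial change of variables $\mathbb{R}_{+}^{*}\times\ds\to C_{\s}$, $(r,{\bf y})\mapsto r{\bf y}$, to extract the volume (the paper simply asserts that the resulting constant equals $1$ by ``a straightforward computation''). Your extra step of pinning down the normalization via $Vol(\ds)=|\det(P_1,\ldots,P_N)|$ and checking it against Corollary~\ref{invert} is a correct and slightly more explicit account of that same constant.
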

   
   \begin{proof}
   Specializing the variables $\alpha_1, \ldots , \alpha_n$ to $1$ we get
 $$ \frac{1}{\prod_{1 \leq j \leq N} |\mu_j^{\s}|} =  \int_{C_{\s}} e^{-( \hgt(\beta_1) y_1 + \cdots + \hgt(\beta_N) y_N)} dy_1 \cdots dy_N . $$
   We perform the change of variables 
$$ \begin{array}{ccc}
    \mathbb{R}_{+}^{*} \times \ds & \longrightarrow & C_{\s} \\
    (r,{\bf y}) & \longmapsto & r{\bf y}
\end{array}  $$
in the right hand side. By construction, $\ds$ is included in the affine hyperplane $\hyp$ defined as $\{ \hgt(\beta_1) y_1 + \cdots + \hgt(\beta_N) y_N = 1 \}$. Hence we get $C Vol(\ds)$ where $C$ is some constant. A straightforward computation shows  that this constant is equal to $1$. 
   \end{proof}

Consequently we also get the following statement:

 \begin{cor} \label{corhook}
 Assume $w \in W$ is such that the cluster algebra $\Aqnw$ is of finite type. Then one has
$$ \frac{N!}{\prod_{\beta \in \Phi_{+}^{w}} \hgt(\beta)} = \sum_{\s} \frac{N!}{\prod_{1 \leq j \leq N} |\mu_j^{\s}|} . $$
\end{cor}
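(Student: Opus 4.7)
The plan is to deduce Corollary~\ref{corhook} directly from Theorem~\ref{prophook} via specialization of the formal variables $\alpha_1, \ldots, \alpha_n$. Theorem~\ref{prophook} asserts an identity in the field of rational functions $\mathbb{L} = \mathbb{C}(\alpha_1, \ldots, \alpha_n)$, so I would evaluate both sides at the point $\alpha_1 = \cdots = \alpha_n = 1$. One must first check that no denominator vanishes under this substitution: this is immediate because every $\beta \in \Phi_+^w$ and every weight $\wt(\mu_j^{\s})$ lies in $Q_+ \setminus \{0\}$, so its specialization is a strictly positive integer.

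Under this substitution, each factor $\frac{1}{\beta}$ on the left-hand side of Equation~\eqref{eqprophook} becomes $\frac{1}{\hgt(\beta)}$ by the very definition of the height. Similarly, on the right-hand side, each factor $\frac{1}{\wt(\mu_j^{\s})}$ becomes $\frac{1}{\hgt(\wt(\mu_j^{\s}))} = \frac{1}{|\mu_j^{\s}|}$, where the last equality is precisely the content of the Remark following Lemma~\ref{petitlem}, applied to the dominant word $\mu_j^{\s}$.

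Equation~\eqref{eqprophook} thus specializes to the numerical identity
$$ \prod_{\beta \in \Phi_{+}^{w}} \frac{1}{\hgt(\beta)} = \sum_{\s} \prod_{1 \leq j \leq N} \frac{1}{|\mu_j^{\s}|} , $$
and multiplying both sides by $N!$ yields the claim. There is no serious obstacle: the entire analytic content is absorbed into Theorem~\ref{prophook} (via the integral representation of Lemma~\ref{technical} and the fact that in the finite-type case the cones $C_{\s}$ tile the positive orthant up to a set of measure zero), while Corollary~\ref{corhook} merely extracts its numerical shadow. The interest of this specialization lies in the resulting shape of the formula, which mirrors the classical Peterson-Proctor hook formula discussed before the statement and suggests the cluster-theoretic reinterpretation alluded to in the introduction.
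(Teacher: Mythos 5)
Your proposal is correct and matches the paper's (implicit) derivation: Corollary~\ref{corhook} is obtained by evaluating the identity of Theorem~\ref{prophook} at $\alpha_1=\cdots=\alpha_n=1$, which is legitimate since every factor is the inverse of a nonzero element of $Q_{+}$ and hence specializes to the inverse of a positive integer, with $\frac{1}{\beta}\mapsto\frac{1}{\hgt(\beta)}$ and $\frac{1}{\wt(\mu_j^{\s})}\mapsto\frac{1}{|\mu_j^{\s}|}$ by Lemma~\ref{petitlem}. Multiplying by $N!$ gives the stated formula, exactly as intended.
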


\begin{rk}
 In the general case, $\Aqnw$ can be of infinite cluster type but the sum 
 $$ \sum_{\s} \frac{N!}{\prod_{1 \leq j \leq N} |\mu_j^{\s}|} $$
 still makes sense (as the disjoint union of the simplices $\ds$ is always included in $\Delta(\Aqnw)$). We don't know if this sum still takes a remarkable form in this general situation. 
 \end{rk}

 In order to make sense of a link between Theorem~\ref{HookNakada} and Theorem~\ref{prophook}, one should take care of the conditions imposed on $w$. Theorem~\ref{HookNakada} holds under the assumption that $w$ is dominant minuscule whereas a necessary condition for Theorem~\ref{prophook} to hold is that $\Aqnw$ has to be a cluster algebra of finite type. We conjecture the following:
 
 \begin{conj} \label{conj}
 If $w \in W$ is dominant minuscule, then the cluster algebra $\Aqnw$ has a finite number of seeds. 
 \end{conj}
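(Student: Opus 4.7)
My approach is to combine Fomin-Zelevinsky's classification of finite-type cluster algebras with the combinatorial structure of dominant minuscule elements. By the Fomin-Zelevinsky theorem, $\Aqnw$ has finitely many seeds if and only if its exchange quiver is mutation-equivalent to a Dynkin quiver of finite type (in the simply-laced case; more generally, to a valued quiver of finite Cartan type). It therefore suffices to identify the mutation class of the initial quiver attached to the seed $\mathcal{S}^{\mathbf{w}_{<}}$. The explicit description in Theorem~\ref{initpara}, together with the quiver description from \cite{GLS,KKKO}, lets us read this initial quiver directly off the heap of the reduced expression $\mathbf{w}_{<}$.

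The key input is that, by Stembridge and Proctor, dominant minuscule elements are exactly those $w \in W$ whose heap is a \emph{$d$-complete poset}. My plan is then to combine Proctor's classification of irreducible $d$-complete posets (shapes, shifted shapes, double-tailed diamonds, and a handful of exceptional families tied to $E_6$, $E_7$, $E_8$) with his slant-sum decomposition, reducing the general statement to a verification in each irreducible family that the associated quiver is mutation-equivalent to a Dynkin quiver. Several families should be accessible through already known finite-type cluster structures on open Richardson varieties and partial flag varieties, for instance via Scott's results for Grassmannian subcases.

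A complementary and potentially cleaner route is through the additive Geiss-Leclerc-Schr\"oer categorification of $\Aqnw$ by a Frobenius subcategory of modules over the preprojective algebra of $\mathfrak{g}$: that categorification identifies cluster monomials with rigid indecomposables, so $\Aqnw$ has finitely many seeds precisely when the relevant subcategory is representation-finite. Under this dictionary the conjecture becomes the assertion that the GLS subcategory is representation-finite for $w$ dominant minuscule, which one might hope to establish uniformly using tilting theory together with the $d$-complete structure on the heap.

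The hardest part, in either approach, will be to obtain a uniform argument that avoids a full case-by-case analysis over Proctor's families. A promising direction is to tie the hook-product structure visible in Theorem~\ref{prophook} and Nakada's colored hook formula (Theorem~\ref{HookNakada}) directly to the combinatorics of mutation through Corollary~\ref{corhook}: an independent combinatorial proof of the identity of Theorem~\ref{prophook} in the dominant-minuscule case, say by induction on slant sums of $d$-complete posets, might well force the collection of simplices $\Delta_{\mathcal{S}}$ to be finite, and hence $\Aqnw$ to be of finite cluster type.
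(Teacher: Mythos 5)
The statement you are addressing is not proved in the paper at all: it is stated as Conjecture~\ref{conj} and left open, the author only checking it in the single example $w=s_2s_1s_3s_2$ in type $A_3$. So there is no proof of record to measure you against, and the real question is whether your proposal closes the gap. It does not: what you have written is a research programme, not a proof. The first route (read the initial quiver of $\mathcal{S}^{\mathbf{w}_{<}}$ off the heap, invoke Stembridge--Proctor to identify dominant minuscule elements with $d$-complete heaps, then run through Proctor's classification of irreducible $d$-complete posets) is plausible, but every substantive step is deferred: you never verify mutation-equivalence to a finite-type quiver in even one of Proctor's families, and you give no argument that the slant-sum decomposition of a $d$-complete poset corresponds to an operation on exchange quivers that preserves finite cluster type --- gluing quivers along vertices does not in general preserve finiteness, so this reduction itself needs proof. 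The second route, via the Geiss--Leclerc--Schr\"oer Frobenius category, is essentially a restatement of the conjecture in equivalent language (finitely many seeds iff finitely many reachable rigid objects), with the phrase ``one might hope to establish uniformly using tilting theory'' standing in for the entire argument.

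The third, ``potentially cleaner'' route is moreover logically backwards. In the paper, Theorem~\ref{prophook} and Corollary~\ref{corhook} are \emph{deduced from} the assumption that $\Aqnw$ has finitely many seeds (finiteness is what guarantees the cones $C_{\mathcal{S}}$ cover the positive orthant up to measure zero). An independent combinatorial proof of the identity of Theorem~\ref{prophook} for dominant minuscule $w$ would not ``force the collection of simplices $\Delta_{\mathcal{S}}$ to be finite'': the remark following Corollary~\ref{corhook} explicitly observes that in infinite cluster type the simplices $\Delta_{\mathcal{S}}$ are still pairwise disjoint inside $\Delta(\Aqnw)$ and the sum of their volumes still makes sense, and nothing prevents infinitely many simplices from exhausting the total volume. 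So a volume or rational-function identity cannot by itself distinguish finite from infinite type. In short, the conjecture remains open after your proposal; if you want to pursue it, the first route is the realistic one, but it requires actually computing the exchange quivers attached to Proctor's irreducible families (or invoking known finite-type classifications for unipotent cells) and proving a gluing lemma compatible with slant sums.
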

 
 This would imply that the equality given by Theorem~\ref{prophook}, although of different nature than Theorem~\ref{HookNakada}, holds in a larger generality. 
 
  Let us give an example where Conjecture~\ref{conj} holds. It corresponds to the example considered in {{\cite[Section 2]{Nakada}}}. 
  We consider a Lie algebra $\mathfrak{g}$ of type $A_3$ with simple roots $\Pi := \{ \alpha_1, \alpha_2, \alpha_3 \}$. We let $s_1,s_2,s_3$ denote the simple reflections of the Weyl group $W$ of $\mathfrak{g}$. 
   We set $w := s_2s_1s_3s_2$ and we choose the reduced expression ${\bf w}=(2,1,3,2)$ of $w$. It is straightforward  to check that $w$ is dominant minuscule using the criterion {{\cite[Proposition 2.3]{Stem}}}. By {{\cite[Equation 7.6]{GLS}}} (see also {{\cite[Section 2.3]{KK}}}), the subset of positive roots $\Phi_{+}^{w}$ ordered with respect to our choice of ${\bf w}$ are given by  
   $$ \Phi_{+}^{w} = \{ \alpha_2 < \alpha_1 + \alpha_2 < \alpha_2 + \alpha_3 < \alpha_1 + \alpha_2 + \alpha_3 \}  $$
In this case Nakada's colored hook formula is given in {{\cite[Section 2]{Nakada}}} and can be written as 
\begin{equation} \label{exhook}
 \begin{split}
 \frac{1}{\alpha_2} \frac{1}{\alpha_1 + \alpha_2} \frac{1}{\alpha_2 + \alpha_3} \frac{1}{\alpha_1 + \alpha_2 + \alpha_3} & = \frac{1}{\alpha_2} \frac{1}{\alpha_1 + \alpha_2} \frac{1}{\alpha_1 + \alpha_2 + \alpha_3} \frac{1}{\alpha_1 + 2 \alpha_2 + \alpha_3} \\
     & \qquad  + \frac{1}{\alpha_2} \frac{1}{\alpha_2 + \alpha_3} \frac{1}{\alpha_1 + \alpha_2 + \alpha_3} \frac{1}{\alpha_1 + 2 \alpha_2 + \alpha_3} . 
  \end{split}
\end{equation}
On the other hand, the cluster algebra $\Aqnw$ is of rank $4$ and with the notations of Section~\ref{remindKK}, $J_{ex} = \{1\}$ and $J_{fr} = \{2,3,4\}$. Thus there is only one mutation direction and hence by the involutivity of cluster mutations there are exactly two seeds in $\Aqnw$. Each of these two seeds contains one unfrozen variable and three frozen variables. The chosen reduced expression of $w$ corresponds to the restriction on $\Phi_{+}^{w}$ of the ordering $2<1<3$ on the $A_3$ Dynkin diagram. Thus Theorem~\ref{initpara} gives the dominant words $\mu_i^{\s} , i=1, \ldots ,4$ for the seed $\s = \mathcal{S}^{\bf w}$. Here we only need their weights, which are given by
$$ \wt(\mu_1) = \alpha_2  \quad \wt(\mu_2) = \alpha_1 + \alpha_2 \quad \wt(\mu_3) = \alpha_2 + \alpha_3 \quad \wt(\mu_4) = \alpha_1 + 2 \alpha_2 + \alpha_3 . $$
The exchange matrix of the seed  $\mathcal{S}^{\bf w}$ is given by $B =~^t (0,1,1,-1)$
and thus the cluster variable $x'_1$ obtained after performing the mutation at $x_1$ of the seed $\mathcal{S}^{\bf w}$ is given by 
$$ x'_1 = \frac{1}{x_1} \left( x_2x_3 + x_4 \right) . $$
 The corresponding dominant word $\mu'_1$ has weight $wt(\mu'_1) = \alpha_1 + \alpha_2 + \alpha_3$.
Thus the sum in the right hand side of Equation~\eqref{eqprophook} is 
$$ \frac{1}{\alpha_2} \frac{1}{\alpha_1 + \alpha_2} \frac{1}{\alpha_2 + \alpha_3} \frac{1}{\alpha_1 + 2 \alpha_2 + \alpha_3} + \frac{1}{\alpha_1 + \alpha_2 + \alpha_3} \frac{1}{\alpha_1 + \alpha_2} \frac{1}{\alpha_2 + \alpha_3} \frac{1}{\alpha_1 + 2 \alpha_2 + \alpha_3} . $$
In this example one can check by a straightforward calculation that this rational function is equal to 
$$  \frac{1}{\alpha_2} \frac{1}{\alpha_1 + \alpha_2} \frac{1}{\alpha_2 + \alpha_3} \frac{1}{\alpha_1 + \alpha_2 + \alpha_3} $$
which is exactly the statement of Theorem~\ref{prophook}. 

\begin{rk}
The sums of rational functions on the right hand side of Equations~\eqref{eqNakada}  and~\eqref{eqprophook} are a priori of a very different combinatorial natures. For instance these sums do not have the same number of terms in general. Moreover when specializing the $\alpha_i$ to $1$, the terms in the right hand side of Equation~\eqref{eqNakada} all take the same value $1/N!$. On the contrary the value taken by the term indexed by a seed $\s$ in Equation~\eqref{eqprophook} is essentially the volume of the simplex $\ds$, which is not the same for every seed. However, it turns out (even in more complicated examples) that these two different expressions take rather similar forms. This might suggest closer connections between the combinatorics of fully-commutative elements of Weyl groups and cluster theory. 
\end{rk}

 \begin{rk} 
The rational functions on the right hand-side of Equation~\eqref{eqNakada} also appeared in the recent work of Baumann-Kamnitzer-Knutson \cite{BKK}. They are related with the definition of Duistermaat-Heckmann measures used to compare various bases in $\Aqn$. In an appendix of the same work \cite{BKK}, Dranowski-Kamnitzer-Morton-Ferguson used these tools to prove that the Mirkovi\'c-Vilonen basis and the dual semicanonical basis are not the same.
 \end{rk}

\Address

\end{document}